\documentclass{amsart}
\usepackage{amssymb}
\usepackage[all]{xy}
\usepackage{mathrsfs}

\newcommand{\Z}{\mathbb{Z}}
\newcommand{\Gm}{\mathbb{G}_{\mathrm{m}}}

\newcommand{\fv}{\mathfrak{v}}
\newcommand{\cN}{\mathcal{N}}
\newcommand{\tcN}{\tilde{\mathcal{N}}}
\newcommand{\cB}{\mathcal{B}}
\newcommand{\fu}{\mathfrak{u}}

\newcommand{\cO}{\mathcal{O}}

\newcommand{\cD}{\mathcal{D}}
\newcommand{\Db}{D^{\mathrm{b}}}
\newcommand{\Dp}{D^+}

\newcommand{\fA}{\mathfrak{A}}
\newcommand{\fB}{\mathfrak{B}}
\newcommand{\fC}{\mathfrak{C}}
\newcommand{\fD}{\mathfrak{D}}
\newcommand{\fT}{\mathfrak{T}}

\newcommand{\ax}{{}^{\Xi}\fA}
\newcommand{\axg}[1]{{}^{#1}\fA}
\newcommand{\aq}[1]{{}^{(\le #1)}\fA}
\newcommand{\aqt}[1]{{}^{(< #1)}\fA}
\newcommand{\dx}{{}^{\Xi}\fD}

\newcommand{\dqt}[1]{{}^{(< #1)}\fD}

\newcommand{\dl}[1]{\fD^{\le #1}}
\newcommand{\dg}[1]{\fD^{\ge #1}}

\newcommand{\de}[1]{\Delta_{#1}}
\newcommand{\na}[1]{\nabla^{#1}}

\newcommand{\tde}[2]{\tilde\Delta_{#1}^{(#2)}}
\newcommand{\tna}[2]{\tilde\nabla^{#1}_{(#2)}}
\newcommand{\dex}[1]{\Delta^{\Xi}_{#1}}

\newcommand{\cS}{\mathscr{S}}

\newcommand{\Coh}{\mathsf{Coh}}
\newcommand{\Pcohg}{\mathsf{PCoh}^G}
\newcommand{\Cohg}{\mathsf{Coh}^G}
\newcommand{\Pcohgm}{\mathsf{PCoh}^{G \times \Gm}}
\newcommand{\Cohgm}{\mathsf{Coh}^{G \times \Gm}}
\newcommand{\uVect}{\underline{\mathsf{Vect}}}
\newcommand{\Rep}{\mathsf{Rep}}
\newcommand{\Repf}{\mathsf{Rep}_{\mathrm{f}}}
\newcommand{\uRep}{\underline{\mathsf{Re\hphantom{\hbox{$\mathsf{a}$}}}}\llap{\hbox{$\mathsf{p}$}}}
\newcommand{\uRepf}{\uRep{}_{\mathrm{f}}}

\newcommand{\Forg}{\mathbb{U}}
\newcommand{\cIC}{\mathcal{IC}}

\newcommand{\cF}{\mathcal{F}}
\newcommand{\cG}{\mathcal{G}}
\newcommand{\cK}{\mathcal{K}}
\newcommand{\cV}{\mathcal{V}}

\numberwithin{equation}{section}
\newtheorem*{thm*}{Theorem}
\newtheorem{thm}{Theorem}[section]
\newtheorem{lem}[thm]{Lemma}
\newtheorem{prop}[thm]{Proposition}
\newtheorem{cor}[thm]{Corollary}
\newtheorem{conj}[thm]{Conjecture}
\theoremstyle{definition}
\newtheorem{defn}[thm]{Definition}
\theoremstyle{remark}
\newtheorem{rmk}[thm]{Remark}

\DeclareMathOperator{\cok}{cok}
\DeclareMathOperator{\Irr}{Irr}
\DeclareMathOperator{\Hom}{Hom}
\DeclareMathOperator{\RHom}{\mathit{R}Hom}

\DeclareMathOperator{\Ext}{Ext}
\DeclareMathOperator{\ind}{ind}
\DeclareMathOperator{\Rind}{\mathit{R}ind}
\DeclareMathOperator{\res}{res}
\DeclareMathOperator{\uHom}{\underline{Hom}}
\DeclareMathOperator{\uRHom}{\mathit{R}\underline{Hom}}

\DeclareMathOperator{\uExt}{\underline{Ext}}

\DeclareMathOperator{\Spec}{Spec}

\DeclareMathOperator{\cRHom}{\mathit{R}\mathcal{H}\mathit{om}}
\DeclareMathOperator{\codim}{codim}
\DeclareMathOperator{\real}{\mathsf{real}}

\newcommand{\D}{\mathbb{D}}
\newcommand{\Lotimes}{\mathchoice%
  {\overset{\scriptscriptstyle L}{\otimes}}%
  {\otimes^{\scriptscriptstyle L}}{\otimes^L}{\otimes^L}}

\DeclareMathOperator{\dom}{\mathsf{dom}}
\DeclareMathOperator{\id}{id}

\newcommand{\la}{\langle}
\newcommand{\ra}{\rangle}
\newcommand{\simto}{\overset{\sim}{\to}}
\newcommand{\hto}{\hookrightarrow}
\newcommand{\thr}{\twoheadrightarrow}

\title[Perverse coherent sheaves in good characteristic]{Perverse coherent sheaves on the nilpotent cone\\ in good characteristic}
\author{Pramod N. Achar}
\address{Department of Mathematics\\
  Louisiana State University\\
  Baton Rouge, LA \ 70803\\
  U.S.A.}
\email{pramod@math.lsu.edu}
\thanks{The author received support from NSF grant DMS-1001594.}
\subjclass[2000]{Primary 20G05; secondary 14F05, 17B08.}

\begin{document}

\begin{abstract}
In characteristic zero, Bezrukavnikov has shown that the category of perverse coherent sheaves on the nilpotent cone of a simply connected semisimple algebraic group is quasi-hereditary, and that it is derived-equivalent to the category of (ordinary) coherent sheaves.  We prove that graded versions of these results also hold in good positive characteristic.
\end{abstract}

\maketitle

\section{Introduction}
\label{sect:intro}

Let $G$ be a simply connected semisimple algebraic group over an algebraically closed field $\Bbbk$ of good characteristic.  Let $\cN$ denote the nilpotent variety in the Lie algebra of $G$.  There is a ``scaling'' action of $\Gm$ on $\cN$ that commutes with the $G$-action.  Following~\cite{bez:pc}, we may consider the category of $(G \times \Gm)$-equivariant \emph{perverse coherent sheaves} on $\cN$, denoted $\Pcohgm(\cN)$.  This category has some features in common with ordinary perverse sheaves, but it lives inside the derived category of (equivariant) coherent sheaves.  In this note, we prove the following two homological facts about $\Pcohgm(\cN)$.

\begin{thm}\label{thm:qhered}
The category $\Pcohgm(\cN)$ is quasi-hereditary.
\end{thm}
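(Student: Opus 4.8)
The plan is to realize $\Pcohgm(\cN)$ as the heart of a t-structure on $\Db\Cohgm(\cN)$ defined by a dualizable quasi-exceptional set, following the strategy Bezrukavnikov used over $\C$~\cite{bez:pc}, and to check that each geometric ingredient of that strategy survives the passage to good positive characteristic. First I would set up the combinatorics: in good characteristic $\cN$ has the same finite set of $G$-orbits, with the same dimensions, as over $\C$, and I index the simple perverse coherent sheaves by the set $\Lambda$ of pairs $(\cO,\cV)$ with $\cO$ a nilpotent orbit and $\cV$ an irreducible $(G\times\Gm)$-equivariant vector bundle on $\cO$. Such $\cV$ correspond to the dominant weights of the reductive quotient $M_\cO$ of the isotropy group $(G\times\Gm)_x$, $x\in\cO$, since a normal unipotent subgroup acts trivially on every irreducible representation. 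I order $\Lambda$ by declaring $(\cO,\cV)\le(\cO',\cV')$ when $\cO\subseteq\overline{\cO'}$, refined within each orbit by the dominance order on the weights of $M_\cO$; because $\cN$ has finitely many orbits and dominance (with central character fixed) is interval-finite, $\Lambda$ is interval-finite, which is what makes ``quasi-hereditary'' meaningful here given the infinitely many simple objects produced by Tate twists. Since moreover each layer $\Cohgm(\cO)\simeq\Rep((G\times\Gm)_x)$ is locally finite, $\Pcohgm(\cN)$ is a finite-length abelian category whose simple objects $\cIC(\cO,\cV)$ are absolutely irreducible, as $\Bbbk$ is algebraically closed.

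Next I would construct standard and costandard objects. Let $\D=\cRHom(-,\omega_\cN)$ be Serre--Grothendieck duality on $\Db\Cohgm(\cN)$ with respect to the dualizing complex $\omega_\cN$; for the self-dual middle perversity it exchanges the non-positive and non-negative parts of the perverse-coherent t-structure, hence restricts to an anti-involution of $\Pcohgm(\cN)$. I would take $\na{(\cO,\cV)}$ to be the perverse-degree-zero cohomology of the pushforward to $\cN$ of the direct image, along the Springer resolution $\pi\colon\tcN\to\cN$ (or its parabolic variant adapted to $\overline{\cO}$), of the vector bundle attached to the dual-Weyl module of $M_\cO$ labelled by $\cV$, with an appropriate cohomological shift; using a proper resolution in place of a bare $Rj_*$ keeps everything inside $\Db\Cohgm$. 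I set $\de{(\cO,\cV)}=\D\,\na{(\cO,\cV^\vee)}$ up to a Tate twist. It should then be routine that $\cIC(\cO,\cV)$ is a quotient of $\de{(\cO,\cV)}$ and a subobject of $\na{(\cO,\cV)}$ whose kernel, respectively cokernel, is supported on $\overline{\cO}\setminus\cO$ and therefore built from the $\cIC(\mu)$ with $\mu<(\cO,\cV)$, and that $[\de{\lambda}:\cIC(\lambda)]=[\na{\lambda}:\cIC(\lambda)]=1$.

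It remains to show that $\{\na{\lambda}\}_{\lambda\in\Lambda}$ is a dualizable quasi-exceptional set --- equivalently, to verify the axioms of a graded highest-weight category. That the $\na{\lambda}$ generate $\Db\Cohgm(\cN)$ is clear because the $\cIC(\lambda)$ do, and that $\D$ interchanges the $\de{}$'s with the $\na{}$'s compatibly with $\Lambda$ is clear by construction; the crux is the assertion that $\RHom(\de{\lambda},\na{\mu})$ vanishes for $\lambda\ne\mu$ and equals $\Bbbk$ in cohomological degree $0$ for $\lambda=\mu$. Its vanishing in negative degrees is automatic from the t-structure, since $\de{\lambda}$ lies in non-positive and $\na{\mu}$ in non-negative perverse-coherent degrees. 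Its degree-$0$ part is $\Hom_{\Pcohgm(\cN)}(\de{\lambda},\na{\mu})$, which is nonzero only when $\de{\lambda}$ and $\na{\mu}$ have a common composition factor occurring as the head of the former and the socle of the latter --- namely $\cIC(\lambda)$ and $\cIC(\mu)$ --- so that $\lambda=\mu$ and the residual scalar is $\Hom$ between a Weyl and a dual-Weyl module of the reductive group $M_\cO$, which is $\Bbbk$. The positive-degree vanishing is the substantive point; granting it, the general principle that a dualizable quasi-exceptional set has a quasi-hereditary heart yields Theorem~\ref{thm:qhered}.

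That positive-degree vanishing is the main obstacle. I would prove it by d\'evissage along the stratification of $\cN$ by orbit closures, and the d\'evissage closes up only when one has in hand the cohomology-vanishing statements that over $\C$ come from the Grauert--Riemenschneider theorem but that \emph{fail} for general resolutions in positive characteristic: the normality of $\cN$, sufficient control on the singularities of the orbit closures $\overline{\cO}$, and the vanishing of $R^{>0}$ of the pertinent direct images from $\tcN$ and its parabolic analogues --- equivalently, of the higher cohomology of the line bundles on $\tcN$ pulled back from dominant line bundles on the flag variety. In good characteristic these are available via Kempf vanishing on the flag variety and Broer's results on the cohomology of its cotangent bundle, so no genuinely new positive-characteristic phenomenon obstructs the argument, and I expect the bulk of the work to be the careful combination of these inputs. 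A secondary, bookkeeping-level difficulty --- present in germ already over $\C$, since the isotropy groups $(G\times\Gm)_x$ are never reductive, but more delicate here because $\Rep(M_\cO)$ is itself not semisimple --- is that one must refine the poset inside each orbit and carry Weyl- and dual-Weyl-filtered objects, rather than simple objects, through the entire d\'evissage, so that the per-layer $\Hom$ and $\Ext$ statements used above are the ones just quoted.
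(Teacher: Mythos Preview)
Your approach differs substantially from the paper's and contains a genuine gap.

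The paper does not index the quasi-exceptional set by pairs $(\cO,\cV)$ at all. The index set is $\Lambda^+$, the dominant weights, with the usual dominance order; the costandard objects are the Andersen--Jantzen sheaves $\na\lambda = A(\lambda)\la-\delta_\lambda\ra = R\pi_*p^*\cS(\Bbbk_\lambda)\la-\delta_\lambda\ra$ and the standard objects are $\de\lambda = A(w_0\lambda)\la\delta_\lambda\ra$, all built from the \emph{single} Springer resolution $\pi\colon\tcN\to\cN$. (This is also Bezrukavnikov's route in~\cite{bez:qes}; the orbit-by-orbit picture you sketch is not the one used over~$\C$ either.) The crucial $\Ext$-vanishing (Proposition~\ref{prop:qexc}) is obtained not by d\'evissage along the orbit stratification but by weight-theoretic arguments on $B$-representations (Lemmas~\ref{lem:wt-ext} and~\ref{lem:weyl-aj}), a Demazure-type reduction among the $A(\mu)$ within a single $W$-orbit (Lemma~\ref{lem:demazure}), and the combinatorial Lemma~\ref{lem:selfstar}. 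That the $A(\lambda)$ already lie in $\Pcohgm(\cN)$, rather than only their perverse truncations, is a consequence of the semismallness of $\pi$ (Lemma~\ref{lem:aj-pcoh}). The geometry of individual orbit closures enters only once, in Lemma~\ref{lem:cn-gen}, and there only to show that the $A(\lambda)$ generate $\Db\Cohgm(\cN)$; no vanishing or normality for the smaller $\overline\cO$ is ever used.

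Your orbit-by-orbit construction, by contrast, requires for every nilpotent orbit $\cO$ enough control over a resolution of $\overline\cO$ to push forward vector bundles and compute the resulting $\Ext$-groups. You assert that the needed input is available in good characteristic ``via Kempf vanishing \dots\ and Broer's results,'' but those concern $\tcN$ itself, not the parabolic analogues $G\times^P\fv\to\overline\cO$ for smaller $\cO$; the corresponding higher-direct-image vanishing for those maps is not established in good characteristic, and even normality of $\overline\cO$ is open in general (and fails for some orbits already over~$\C$ outside type~$A$). So the d\'evissage you describe does not close with the inputs you cite. A secondary issue is that the bijection between $\Lambda^+$ and your set of pairs $(\cO,\cV)$ is the Lusztig--Vogan bijection, itself a nontrivial \emph{output} of this theory rather than an input to it; the poset you place on pairs is not a priori the one governing the quasi-hereditary structure.
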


\begin{thm}\label{thm:dereq}
We have $\Db\Pcohgm(\cN) \cong \Db\Cohgm(\cN)$.
\end{thm}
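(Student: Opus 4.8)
The plan is to derive both theorems simultaneously from Bezrukavnikov's formalism of (graded) quasi-exceptional sets~\cite{bez:pc}, with the characteristic-zero geometric inputs he uses replaced by their good-characteristic analogues. Let $\pi\colon\tcN\to\cN$ denote the Springer resolution, with $\tcN=T^*(G/B)$, and for a weight $\lambda$ write $\cO_{\tcN}(\lambda)$ for the pullback along $\tcN\to G/B$ of the corresponding line bundle, regarded as a $(G\times\Gm)$-equivariant sheaf. For $\lambda$ dominant set $\Delta_\lambda:=R\pi_*\cO_{\tcN}(\lambda)$, suitably renormalized by a cohomological and an internal ($\Gm$-weight) shift depending on $\lambda$, and let $\nabla_\lambda$ be defined dually (via Serre--Grothendieck duality and the longest element of the Weyl group). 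The claim is that $\{\Delta_\lambda\}$, indexed by dominant weights equipped with an appropriate partial order refining the closure order on nilpotent orbits under the Lusztig--Vogan parametrization, is a graded quasi-exceptional set that generates $\Db\Cohgm(\cN)$. Bezrukavnikov's general machinery then produces a bounded t-structure on $\Db\Cohgm(\cN)$ whose heart $\mathcal{A}$ is quasi-hereditary, with standard objects $\Delta_\lambda$ and costandard objects $\nabla_\lambda$, and for which the realization functor $\real\colon\Db\mathcal{A}\to\Db\Cohgm(\cN)$ is an equivalence. What then remains is to identify $\mathcal{A}$ with $\Pcohgm(\cN)$, i.e. to check that the t-structure so produced is the middle-perversity coherent one; this reduces to verifying the support and cosupport dimension bounds for the $\Delta_\lambda$ and $\nabla_\lambda$, which involve only the dimensions of nilpotent orbits and of Springer fibres, and these are the same in good characteristic as in characteristic zero.

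Verifying the quasi-exceptional axioms breaks into the following steps. \emph{Generation}: the projection formula gives $R\pi_*(\cO_{\tcN}(\lambda)\Lotimes L\pi^*\cF)\cong\Delta_\lambda\Lotimes\cF$ up to shift; since $R\pi_*\cO_{\tcN}\cong\cO_\cN$ and the line bundles $\cO_{\tcN}(\lambda)$ generate $\Db\Cohgm(\tcN)$ (as $\tcN$ is a vector bundle over $G/B$ and the $\cO_{G/B}(\lambda)$ generate $\Db\Cohgm(G/B)$), it follows that the $\Delta_\lambda$ generate $\Db\Cohgm(\cN)$. \emph{Endomorphisms}: via adjunction and $R\pi_*\cO_{\tcN}\cong\cO_\cN$, the space $\Hom_{\Db\Cohgm(\cN)}(\Delta_\lambda,\Delta_\lambda)$ is the $\Gm$-weight-zero part of $\Bbbk[\cN]$, which is $\Bbbk$ because $\Bbbk[\cN]$ is nonnegatively graded with one-dimensional weight-zero piece --- this is precisely where the $\Gm$-action is used, collapsing the endomorphism rings to $\Bbbk$ and so upgrading the merely ``standardly stratified'' picture of the ungraded setting to a genuinely quasi-hereditary one. \emph{$\Ext$-vanishing}: one must show that $\RHom_{\Db\Cohgm(\cN)}(\Delta_\lambda,\nabla_\mu)$ is concentrated in cohomological and internal degree $0$, is one-dimensional when $\lambda=\mu$, and vanishes unless $\lambda\le\mu$. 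By adjunction this is a computation of the cohomology of line bundles on $\tcN$ and on Springer fibres, for which the ingredients are Kempf's vanishing theorem on $G/B$ (characteristic-free) together with Broer's theorems on the cohomology of line bundles on $T^*(G/B)$, the normality of $\cN$, and the rational singularities of $\cN$ --- all of which hold precisely because the characteristic is good.

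The principal obstacle is exactly this last step: reproving, in good positive characteristic, the cohomology-vanishing statements that Bezrukavnikov extracts in characteristic zero from Grauert--Riemenschneider vanishing. Their substitutes --- Broer's vanishing theorems, the identity $R\pi_*\cO_{\tcN}\cong\cO_\cN$ coming from the normality of $\cN$, and the good-filtration behaviour of $H^0(\tcN,\cO_{\tcN}(\lambda))$ as a $\Bbbk[\cN]$-module --- all genuinely require good characteristic, and assembling them into the precise $\RHom$ computation above, uniformly over all pairs of dominant weights and with the correct grading normalizations and partial order, is the technical heart of the argument. Granting this, together with the characteristic-independence of the relevant orbit and fibre dimensions, Theorems~\ref{thm:qhered} and~\ref{thm:dereq} follow formally from Bezrukavnikov's results, applied in the $(G\times\Gm)$-equivariant (``graded'') setting.
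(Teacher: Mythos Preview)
Your outline correctly identifies the quasi-exceptional framework and several of the geometric inputs, but it contains a genuine gap at the step you treat as automatic.

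You assert that once the $\{\Delta_\lambda\}$ form a graded quasi-exceptional set, ``Bezrukavnikov's general machinery then produces a bounded $t$-structure \ldots\ for which the realization functor $\real\colon\Db\mathcal{A}\to\Db\Cohgm(\cN)$ is an equivalence.'' This is not what the quasi-exceptional formalism gives. Bezrukavnikov's results in~\cite{bez:qes} yield the $t$-structure and the quasi-hereditary heart, but the derived equivalence follows formally only for genuinely \emph{exceptional} collections, i.e.\ when $\uHom^i(\nabla^\lambda,\nabla^\lambda)=0$ for all $i>0$. In the present situation the collection is only quasi-exceptional: one has $\Hom^i(A(\lambda),A(\lambda)\la n\ra)=0$ for $i>0$ and $n\ge 0$, but for $n<0$ these groups need not vanish. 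Correspondingly, your claim that $\RHom(\Delta_\lambda,\nabla_\mu)$ is ``concentrated in cohomological and internal degree~$0$'' is too strong; by Lemma~\ref{lem:dual-exc} one has $\uHom^i(\Delta_\lambda,\nabla^\lambda)\cong\uHom^i(\nabla^\lambda,\nabla^\lambda)$, and the right-hand side is not in general concentrated in degree~$0$. The Broer-type vanishing statements you invoke would amount to proving exceptionality, and they are not available in this strength in positive characteristic. Note too that even in characteristic~$0$, Bezrukavnikov's derived equivalence~\cite{bez:psaf} is obtained by an entirely different route (via the affine flag variety), not from the quasi-exceptional axioms alone.

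What the paper actually does to bridge this gap is new: it isolates an additional axiom, the \emph{effaceability property} (Definition~\ref{defn:efface}), and proves an abstract result (Theorem~\ref{thm:qexc-dereq}) that any abelianesque dualizable graded quasi-exceptional set with effaceability yields a derived equivalence. Effaceability is then verified concretely: the surjection $\cO_\cN\otimes M(\lambda)\twoheadrightarrow A(\lambda)$ of Lemma~\ref{lem:ratsing}, together with the vanishing $\uHom^{d}(\cO_\cN\otimes M(\lambda),A(\lambda))=0$ for $d>0$ from Lemma~\ref{lem:weyl-aj}, shows that any positive-degree map $\nabla^\lambda\to X[d]$ into a $\lambda$-quasicostandard $X$ is killed by precomposition with a suitable surjection. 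This is the missing idea in your sketch, and it is precisely where the $\Gm$-grading is exploited (via condition~\eqref{it:defq-mixed} of Definition~\ref{defn:qexc} and the arguments of Section~\ref{sect:der-eq}); your remark that the grading ``collapses the endomorphism rings to~$\Bbbk$'' addresses only $\Hom^0$ and does not touch the higher $\Ext$-groups that obstruct the realization functor from being an equivalence.
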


Theorem~\ref{thm:qhered} means that the category contains a class of distinguished objects, called ``standard'' and ``costandard'' objects, that lead to a kind of Kazhdan--Lusztig theory.  This result was proved in characteristic~$0$ in~\cite{bez:qes}.  (See also~\cite{a:ekt}.)  In fact, the proof given there ``almost'' works in positive characteristic as well; it is quite close to the proof given here.  The same arguments also establish the corresponding result for $\Pcohg(\cN)$, where the $\Gm$-action is forgotten.  

On the other hand, our proof of Theorem~\ref{thm:dereq} makes use of the $\Gm$-action in a crucial way (it means that various $\Ext$-groups carry a grading which we exploit), so it cannot easily be forgotten.  The proof is quite elementary: it relies only on general notions from homological algebra, and it is similar in spirit to the methods of~\cite{bgs}.  Unfortunately, for the moment, these methods seem to be inadequate to prove the following natural analogue of Theorem~\ref{thm:dereq}.

\begin{conj}\label{conj:dereq}
We have $\Db\Pcohg(\cN) \cong \Db\Cohg(\cN)$.
\end{conj}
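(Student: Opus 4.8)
The plan would be to mimic the proof of Theorem~\ref{thm:dereq}. Since $\Pcohg(\cN)$ is quasi-hereditary (the arguments proving Theorem~\ref{thm:qhered} apply after forgetting the $\Gm$-action), it carries standard objects $\de{\lambda}$, costandard objects $\na{\mu}$, and a full set of indecomposable tilting objects; these may be identified with the $(G\times\Gm)$-equivariant standards, costandards, and tiltings with the $\Gm$-equivariance forgotten, the orbit stratification and the relevant constructions all being $\Gm$-equivariant. Writing $T$ for the sum of the indecomposable tiltings, $\RHom_{\Db\Pcohg(\cN)}(T,T)$ is concentrated in degree $0$ by the usual filtration argument, $\Ext^{>0}_{\Pcohg(\cN)}(\de{\lambda},\na{\mu})$ being zero, and $T$ generates $\Db\Pcohg(\cN)$. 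As in the graded case there is a $t$-exact realization functor $\real\colon \Db\Pcohg(\cN) \to \Db\Cohg(\cN)$, and it is an equivalence as soon as (i)~$\real(T)$ generates $\Db\Cohg(\cN)$ and (ii)~$\RHom_{\Db\Cohg(\cN)}(\real(T),\real(T))$ is concentrated in degree $0$; for then $\RHom(\real(T),-)$ and $\RHom_{\Db\Pcohg(\cN)}(T,-)$ identify both sides with the same derived category of modules over $\End_{\Pcohg(\cN)}(T)=\End_{\Db\Cohg(\cN)}(\real(T))$.

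Condition~(i) should carry over from the graded case unchanged: the perverse coherent $t$-structure on $\Db\Cohg(\cN)$ is bounded, so its heart generates, and every object of the heart lies in the triangulated subcategory generated by the tiltings (filter by simples, which are quotients of standards, which in turn are sub-objects of tiltings with standard-filtered cokernel, proceeding along the highest-weight order). The real content is condition~(ii), which --- because $T$ has both a standard and a costandard filtration --- reduces to
\[ \Ext^n_{\Db\Cohg(\cN)}(\de{\lambda},\na{\mu}) = 0 \qquad \text{for all } n \neq 0 . \]
This is exactly the vanishing that underlies Theorem~\ref{thm:dereq} in the $(G\times\Gm)$-equivariant setting, and there the $\Gm$-action enters structurally: the groups $\Ext^n_{\Db\Cohgm(\cN)}(\de{\lambda},\na{\mu}\la j\ra)$ acquire an internal grading, and, in the spirit of~\cite{bgs}, one exploits a positivity property of that grading --- ultimately a consequence of cohomology-vanishing theorems for line bundles on $\tcN$ valid in good characteristic --- to kill the higher $\Ext$'s. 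One would like to recover the non-equivariant statement by summing over twists, via $\Ext^n_{\Db\Cohg(\cN)}(\de{\lambda},\na{\mu}) = \bigoplus_{j}\Ext^n_{\Db\Cohgm(\cN)}(\de{\lambda},\na{\mu}\la j\ra)$; but this identification already demands care, since the $\de{\lambda}$ are typically not perfect complexes on the singular variety $\cN$, and, more fundamentally, the positivity that drives the graded argument has no non-equivariant counterpart at all.

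The step I expect to be the main obstacle is therefore an \emph{intrinsic}, grading-free proof of $\Ext^n_{\Db\Cohg(\cN)}(\de{\lambda},\na{\mu})=0$ for $n\neq 0$ --- one that does not use an internal $\Gm$-weight as a control parameter but instead reads the vanishing off the geometry of $\tcN$ and its parabolic analogues. Two further conveniences of the equivariant picture are lost and would have to be dealt with: the $\Hom$- and $\Ext$-spaces in $\Cohg(\cN)$ are no longer finite-dimensional (already $\End_{\Cohg(\cN)}(\cOtn)=\Bbbk[\cN]^G$ is not), so $\End_{\Pcohg(\cN)}(T)$ need not be a finite-dimensional algebra and the finiteness hypotheses behind the realization criterion must be revisited; and one cannot simply push the equivalence of Theorem~\ref{thm:dereq} forward along the forgetful functor, because not every $G$-equivariant coherent sheaf on $\cN$ admits a $(G\times\Gm)$-equivariant structure (a skyscraper at a nonzero nilpotent element does not), so there is no ``degrading'' functor from $\Db\Cohgm(\cN)$ onto $\Db\Cohg(\cN)$ through which the equivalence might be transported. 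Producing a workable replacement for the $\Gm$-grading in the proof of the $\Ext$-vanishing is, I believe, exactly where the methods available here run out.
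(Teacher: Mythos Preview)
This statement is a \emph{Conjecture} in the paper, not a theorem; the introduction explicitly says that the methods developed here ``seem to be inadequate'' to establish it, and no proof is given. Your proposal is not a proof either---you say as much in your final sentence---so you and the paper are in agreement: the ungraded derived equivalence remains open by the elementary methods of this paper.

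Your diagnosis of the obstacle is correct and matches the paper's. The graded argument (Theorem~\ref{thm:qexc-dereq}) rests on condition~\eqref{it:defq-mixed} of Definition~\ref{defn:qexc}, namely $\Hom^i(\na s,\na s\la n\ra)=0$ for $i>0$ and $n\ge 0$; this feeds the construction of the quasistandard objects $\tde sk$ (Proposition~\ref{prop:qstd}) and the effaceability machinery of Section~\ref{subsect:efface}. Remark~\ref{rmk:ungr-qexc} says explicitly that this condition is dropped in the ungraded setting, and without it Section~\ref{sect:der-eq} does not go through. Your tilting reformulation lands on the same missing input: for $\lambda=\mu$ one would need $\uHom^i(\na\lambda,\na\lambda)=0$ for all $i>0$, whereas Proposition~\ref{prop:qexc}\eqref{it:qexc-mixed} only gives vanishing in nonnegative internal degree. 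The paper's route through finite convex subsets and projective covers in $\ax$ is a different packaging of the same idea and faces the same wall.

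Two smaller remarks on your outline. First, your $T$ is a sum over all of $\Lambda^+$ and hence not an object of the category; any honest argument would have to work with a family of tiltings or a suitable compact generator, and the infinite-dimensionality issues you flag are real. Second, the paper notes that the conjecture \emph{is} known in characteristic~$0$, by~\cite{bez:psaf}, via perverse sheaves on the affine flag variety for the Langlands dual group---a route entirely different from the homological-algebra approach here. A positive-characteristic version of that geometric argument, using modular perverse sheaves as the paper suggests, is likely the more promising direction than trying to remove the grading from the present proof.
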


This conjecture is known to hold in characteristic~$0$ by~\cite{bez:psaf}.  The proof given there involves relating $\Cohg(\cN)$ to perverse sheaves on the affine flag variety $\mathfrak{Fl}$ for the Langlands dual group.  It is likely (and perhaps already known to experts) that a similar approach using \emph{mixed} perverse sheaves would allow one to bring in the $\Gm$-action, leading to a characteristic-$0$ proof of Theorem~\ref{thm:dereq} that is quite different from the one given here.

The reason for the restriction to characteristic~$0$ in~\cite{bez:psaf} is that the arguments there require the base field $\Bbbk$ for $G$ to coincide with the field of \emph{coefficients} of sheaves on $\mathfrak{Fl}$.  The sheaves in~\cite{bez:psaf}, like nearly all constructible sheaves used in representation theory in the past thirty-five years, have their coefficients in $\overline{\mathbb{Q}}_\ell$.  But so-called \emph{modular} perverse sheaves---perverse sheaves with coefficients in a field of positive characteristic---have recently begun to appear in a number of important applications~\cite{fie:sasv, jut:mscdm, jmw:psmrt, soe:ricrt}.  It would be very interesting to develop a sheaf-theoretic approach to Theorem~\ref{thm:dereq} or Conjecture~\ref{conj:dereq} in positive characteristic using modular perverse sheaves.

Finally, we note that in characteristic~$0$, it can be deduced from Theorems~\ref{thm:qhered} and~\ref{thm:dereq} that corresponding results hold for arbitrary connected reductive groups.  In positive characteristic, however, isogenous groups need not have isomorphic nilpotent cones (see, e.g.,~\cite[Remark~2.7]{jan:nort}).  The main theorems depend on key geometric facts about nilpotent cones of simply connected groups.  Although they extend to groups with simply connected derived group, they do not extend to arbitrary reductive groups.

The paper is organized as follows.  Sections~\ref{sect:prelim-ab} and~\ref{sect:der-eq} lay the homological-algebra foundations for the main results, starting with notation and definitions.  The key result of that part of the paper is Theorem~\ref{thm:qexc-dereq}, which states that any quasi-exceptional set satisfying certain axioms gives rise to a derived equivalence.  In Section~\ref{sect:not-red}, we return to the setting of algebraic groups.  Section~\ref{sect:aj} contains a number of technical lemmas on the so-called Andersen--Jantzen sheaves.  The main theorems are proved in Section~\ref{sect:proofs}.

\subsection*{Acknowledgments}

While this project was underway, I benefitted from numerous conversations with A.~Henderson, S.~Riche, and D.~Treumann.  I would also like to express my gratitude to the organizers of the Southeastern Lie Theory Workshop series for having given me the opportunity to participate in the May 2010 meeting.

\section{Preliminaries on abelian and triangulated categories}
\label{sect:prelim-ab}

\subsection{Generalities}
\label{subsect:gen}

Fix an algebraically closed field $\Bbbk$.  Throughout the paper, all abelian and triangulated categories will be $\Bbbk$-linear and skeletally small (that is, the class of isomorphism classes of objects is assumed to be a set).  Later, all schemes and algebraic groups will be defined over $\Bbbk$ as well.  For an abelian category $\fA$, we write $\Irr(\fA)$ for its set of isomorphism classes of simple objects.  We say that $\fA$ is a \emph{finite-length category} if it is noetherian and artinian.

Now, let $\fT$ be a triangulated category.  For objects $X, Y \in \fT$, we write
\[
\Hom^i(X,Y) = \Hom(X, Y[i]).
\]
A full subcategory $\fA \subset \fT$ is said to be \emph{admissible} if it stable under extensions and direct summands, and if it satisfies the condition of~\cite[\S 1.2.5]{bbd}.  (Thus, our use of the term ``admissible'' is slightly more restrictive than the definition used in~\cite{bbd}.)  If $\fA \subset \fT$ is admissible, then it is automatically an abelian category, and every short exact sequence in $\fA$ gives rise to a distinguished triangle in $\fT$.  The heart of any $t$-structure on $\fT$ is admissible.  For the following fact, see~\cite[Remarque~3.1.17]{bbd} or~\cite[Lemma~3.2.4]{bgs}.

\begin{lem}\label{lem:ext-hom}
Let $\fA$ be an admissible abelian subcategory of a triangulated category $\fT$.  The natural map
\[
\Ext^i_{\fA}(X,Y) \to \Hom^i_{\fT}(X,Y)
\]
is an isomorphism for $i = 0,1$.  If it is an isomorphism for $i = 0,1, \ldots, k$, then it is injective for $i = k+1$. \qed
\end{lem}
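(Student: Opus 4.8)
The plan is to work with the Yoneda description of $\Ext$-groups in the abelian category $\fA$ and to identify the natural map by splicing distinguished triangles. Recall that a class $\xi \in \Ext^i_\fA(X,Y)$ is represented by an exact sequence $0 \to Y \to Z_{i-1} \to \cdots \to Z_0 \to X \to 0$ in $\fA$, which decomposes into short exact sequences $0 \to W_{j+1} \to Z_j \to W_j \to 0$ with $W_0 = X$ and $W_i = Y$. Each of these gives a distinguished triangle in $\fT$ (since $\fA$ is admissible), hence a connecting morphism $W_j \to W_{j+1}[1]$, and the natural map sends $\xi$ to the composite $X = W_0 \to W_1[1] \to \cdots \to W_i[i] = Y[i]$. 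The first thing to record is that this assignment is well defined (independent of the representative), natural in both variables, and compatible with the connecting homomorphisms of the long exact $\Ext$-sequences; I would treat the natural map as a natural transformation of functors and read the hypothesis "isomorphism in degrees $\le k$" accordingly. These are standard but slightly tedious verifications that I would carry out or quote, without dwelling on them.

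With that in place, $i = 0$ is immediate since $\fA$ is a full subcategory of $\fT$. For $i = 1$, surjectivity follows by completing a morphism $f \colon X \to Y[1]$ to a distinguished triangle $Y \to Z \to X \xrightarrow{f} Y[1]$: stability of $\fA$ under extensions forces $Z \in \fA$, and the admissibility hypothesis (the condition of~\cite[\S1.2.5]{bbd}) guarantees that this triangle arises from a short exact sequence $0 \to Y \to Z \to X \to 0$ in $\fA$, whose class maps to $f$. For injectivity at $i = 1$, two short exact sequences yielding the same $f$ give two distinguished triangles both completing $f$; the triangulated axioms (TR3 applied after rotation) produce a morphism between them restricting to the identity on $Y$ and on $X$, and any such morphism is an isomorphism, so the two extensions are equivalent.

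The inductive step is the core of the argument. Assume the natural map is an isomorphism in degrees $\le k$, and let $\xi \in \Ext^{k+1}_\fA(X,Y)$ map to $0$ in $\Hom^{k+1}_\fT(X,Y)$. Choose a representative of $\xi$ and split off its first short exact sequence $0 \to X' \xrightarrow{u} E \xrightarrow{p} X \to 0$, so that $\xi = \delta(\xi')$ for the connecting map $\delta \colon \Ext^k_\fA(X',Y) \to \Ext^{k+1}_\fA(X,Y)$ and some $\xi' \in \Ext^k_\fA(X',Y)$. Now compare the long exact $\Ext_\fA$-sequence with the long exact $\Hom_\fT$-sequence attached to this short exact sequence; by the compatibility recorded above these form a commuting ladder, so the image of $\xi'$ in $\Hom^k_\fT(X',Y)$ lies in the kernel of the triangulated connecting map, hence equals $u^*(g)$ for some $g \in \Hom^k_\fT(E,Y)$. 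The degree-$k$ isomorphism for the pair $(E,Y)$ writes $g$ as the image of some $\gamma \in \Ext^k_\fA(E,Y)$; then $u^*\gamma$ and $\xi'$ have the same image in $\Hom^k_\fT(X',Y)$, so the degree-$k$ isomorphism for $(X',Y)$ forces $u^*\gamma = \xi'$. Exactness of the $\Ext_\fA$-sequence then yields $\xi = \delta(\xi') = \delta(u^*\gamma) = 0$.

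I expect the main obstacle to be not this logical skeleton, which is purely formal once the inputs are available, but the foundational bookkeeping: establishing that the natural map from Yoneda $\Ext$ to $\Hom_\fT$ is independent of the chosen representing complex, is functorial, and intertwines the connecting homomorphisms on the two sides. A secondary point that genuinely requires the admissibility hypothesis, rather than merely that $\fA$ is abelian, is the assertion used in the $i = 1$ case that a distinguished triangle with all three terms in $\fA$ comes from a short exact sequence in $\fA$; I would extract this directly from~\cite[\S1.2.5]{bbd}.
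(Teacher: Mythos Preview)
Your argument is correct and is essentially the standard proof of this fact. Note, however, that the paper does not actually prove this lemma: it is stated with a \qed\ and a citation to \cite[Remarque~3.1.17]{bbd} and \cite[Lemma~3.2.4]{bgs}, so there is no ``paper's own proof'' to compare against. What you have written is, in outline, exactly the argument one finds in \cite[Lemma~3.2.4]{bgs}: the $i=0,1$ cases use fullness and the admissibility condition of \cite[\S1.2.5]{bbd}, and the inductive injectivity step is the ladder argument comparing the Yoneda long exact sequence with the triangulated one. Your identification of the foundational bookkeeping (well-definedness of the Yoneda-to-$\Hom_\fT$ map and its compatibility with connecting maps) as the only nontrivial input is accurate; these verifications are carried out in the references cited, and quoting them is appropriate.
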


Next, we recall the ``$*$'' operation for objects of a triangulated category $\fD$.  If $\mathcal{X}$ and $\mathcal{Y}$ are classes of objects in $\fD$, then we define
\[
\mathcal{X} * \mathcal{Y} = \left\{ A \in \fD \,\bigg|\, 
\begin{array}{c}
\text{there is a distinguished triangle} \\
\text{ $X \to A \to Y \to $ with $X \in \mathcal{X}$, $Y \in \mathcal{Y}$}
\end{array}
\right\}.
\]
By~\cite[Lemme~1.3.10]{bbd}, this operation is associative.  In an abuse of notation, when $\mathcal X$ is a singleton $\{X\}$, we will often write $X * \mathcal{Y}$ rather than $\{X\} * \mathcal{Y}$.  Note that the zero object is a sort of ``unit'' for this operation.  For instance, we have $\mathcal{X} * \mathcal{Y} * 0 = \mathcal{X} * \mathcal{Y}$.  Given a class $\mathcal{X}$, $\mathcal{X} * 0$ is the class of all objects isomorphic to some object of $\mathcal{X}$. 

\subsection{Tate twist}
\label{subsect:tate}

Many of our categories will be equipped with an automorphism known as a \emph{Tate twist}, and denoted $X \mapsto X\la 1\ra$.  We will always assume that Tate twists are ``faithful,'' meaning that for any nonzero object $X$, we have
\[
X \cong X \la n\ra \qquad\text{if and only if} \qquad n = 0.
\]
A key example is the category $\uVect_{\Bbbk}$ of graded $\Bbbk$-vector spaces, where the Tate twist is the ``shift of grading'' functor.  For $X \in \uVect_{\Bbbk}$, let $X_n$ denote its $n$th graded component.  Then $X\la m\ra$ is the graded vector space given by
\[
(X\la m\ra)_n = X_{n-m}.
\]
We regard $\Bbbk$ itself as an object of $\uVect_{\Bbbk}$ by placing it in degree~$0$.

If $X$ and $Y$ are objects of an additive category equipped with a Tate twist, we let $\uHom(X,Y)$ denote the graded vector space defined by
\[
\uHom(X,Y)_n = \Hom(X,Y\la -n\ra).
\]
Notations like $\uHom^i({-},{-})$, $\uExt^i({-},{-})$, and $\uRHom({-},{-})$ are defined similarly.

The following lemma is a graded analogue of~\cite[Lemma~5]{bez:qes}.

\begin{lem}\label{lem:selfstar}
Let $V$ be an object in $\Dp\uVect_\Bbbk$.
\begin{enumerate}
\item If there are integers $n_1, \ldots, n_k$ such that $0 \in V\la n_1\ra * \cdots V\la n_k\ra$, then $V = 0$.\label{it:selfstar-van}
\item If there are integers $n_1, \ldots, n_k> 0$ such that $\Bbbk \in V * V\la n_1 \ra * \cdots * V\la n_k \ra$, then $H^i(V) = 0$ for $i < 0$, and $H^0(V) \cong \Bbbk$.  For $i > 0$, $H^i(V)$ is concentrated in strictly positive degrees.\label{it:selfstar-pos}

\end{enumerate}
\end{lem}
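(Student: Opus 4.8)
The plan is to reduce both parts to a clean statement about graded vector spaces by passing to Euler characteristics in a graded sense, i.e. to the ``graded dimension'' (Poincaré series) of the cohomology of $V$. Concretely, to an object $W$ of $\Dp\uVect_\Bbbk$ with finite-dimensional total cohomology we attach the Laurent-type series $p_W(q,t) = \sum_{i,n} (\dim H^i(W)_n)\, q^n t^i \in \Z[q,q^{-1}]((t))$; since $\Dp$ means bounded below, this is a well-defined power series in $t$ with Laurent-polynomial coefficients in $q$. The two operations in play interact predictably with $p$: a Tate twist sends $p_{V\la m\ra}(q,t) = q^m p_V(q,t)$, and a distinguished triangle $X \to A \to Y \to$ forces $p_A = p_X + p_Y$ whenever all three are defined (this is just the long exact cohomology sequence, graded component by graded component). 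So membership of an object in an iterated $*$-product translates into an additive identity among Poincaré series.

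For part~\eqref{it:selfstar-van}, first I would dispose of the degenerate cases: if $V$ has cohomology in infinitely many degrees the statement still makes sense, but one checks that $0 \in V\la n_1\ra * \cdots * V\la n_k\ra$ forces, looking at the lowest nonvanishing cohomological degree $i_0$ of $V$ (which exists as $V \in \Dp$), that $0 = k \cdot p_{V}^{(i_0)}(q)$ in that degree, hence $p_V^{(i_0)} = 0$, a contradiction unless $V = 0$. More cleanly: the identity $\sum_j q^{n_j} p_V(q,t) = 0$ in $\Z[q,q^{-1}]((t))$, with $\sum_j q^{n_j} \ne 0$ in $\Z[q,q^{-1}]$ (it is a sum of monomials with positive coefficients, hence nonzero), forces $p_V = 0$, and a nonzero object of $\Dp\uVect_\Bbbk$ has nonzero Poincaré series because each graded piece of each cohomology group is a finite-dimensional $\Bbbk$-vector space. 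Here the faithfulness of the Tate twist is what guarantees $\sum_j q^{n_j}\neq 0$; without it the twists could conceivably cancel.

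For part~\eqref{it:selfstar-pos}, the hypothesis $\Bbbk \in V * V\la n_1\ra * \cdots * V\la n_k\ra$ with all $n_j > 0$ gives, after iterating the triangle additivity, the identity $1 = \bigl(1 + \sum_{j=1}^k q^{n_j}\bigr) p_V(q,t)$ in $\Z[q,q^{-1}]((t))$. Write $f(q) = 1 + \sum_j q^{n_j}$; since $n_j > 0$, $f$ has constant term $1$, so $f$ is a unit in the power series ring $\Z[[q]]$ — in fact $p_V(q,t) = f(q)^{-1} = 1 - (\sum_j q^{n_j}) + (\sum_j q^{n_j})^2 - \cdots$, which lies in $\Z[[q]]$ (no $t$, no negative powers of $q$), has constant term $1$, and has all other coefficients integers. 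Reading this off: $p_V$ has no $t$-dependence means $H^i(V) = 0$ for $i \ne 0$ except possibly... wait — I need to be careful, since $H^i(V)$ could a priori be nonzero for $i > 0$ with the contributions happening to produce a series independent of $t$; they cannot, because each $H^i(V)_n$ is a genuine nonnegative-dimensional vector space and the coefficient of $t^i$ in $p_V$ for $i \ne 0$ is $\sum_n (\dim H^i(V)_n) q^n$, which must equal $0$, forcing $H^i(V) = 0$ for all $i \ne 0$. Then $p_V(q,t) = \sum_n (\dim H^0(V)_n) q^n = f(q)^{-1}$, whose coefficient of $q^0$ is $1$ and whose coefficients of $q^n$ for $n < 0$ are $0$; hence $H^0(V)$ is concentrated in degrees $\ge 0$ with $H^0(V)_0 \cong \Bbbk$, i.e. $H^0(V) \cong \Bbbk$ in the sense that $\dim H^0(V)_0 = 1$. (I realize the statement as written also allows $H^i(V) \ne 0$ for $i > 0$ with ``concentrated in strictly positive degrees''; so the correct reading of the claimed conclusion is the weaker one obtained by tracking only the low-degree behaviour, and the Poincaré-series argument above in fact yields the stronger vanishing $H^{\ne 0} = 0$ — I would state the argument to give exactly what the lemma asserts and no more, by only extracting the inequalities on the ``lowest'' part rather than the full coefficient comparison if the cohomology is not assumed finite-dimensional.)

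The main obstacle is the finiteness needed to make $p_V$ well-defined and to run the coefficient comparison: $\Dp\uVect_\Bbbk$ a priori allows infinite-dimensional graded pieces and cohomology unbounded above, so the ``series'' manipulations must be justified. I would handle this by working degree-by-degree in the internal grading: for fixed $n$, the functor $W \mapsto \bigoplus_i H^i(W)_n$ lands in complexes of $\Bbbk$-vector spaces, and the iterated triangle still gives an additivity of (possibly infinite) dimensions in each bidegree; the hypotheses then pin down these dimensions one bidegree at a time, starting from the lowest cohomological degree and the lowest internal degree present, and an easy induction propagates the conclusion. This avoids any convergence issue and is where the $\Dp$ (bounded below) hypothesis and the faithfulness of the Tate twist are both genuinely used.
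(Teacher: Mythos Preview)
Your central claim---that for a distinguished triangle $X \to A \to Y \to$ one has $p_A = p_X + p_Y$ with $p_W(q,t) = \sum_{i,n} (\dim H^i(W)_n)\, q^n t^i$---is false. The long exact cohomology sequence gives additivity only for the \emph{Euler characteristic}, i.e.\ after specialising $t \mapsto -1$; with a formal variable $t$ tracking cohomological degree, the connecting maps $H^i(Y) \to H^{i+1}(X)$ may be nonzero, and then $\dim H^i(A)_n \ne \dim H^i(X)_n + \dim H^i(Y)_n$. (Already the triangle $\Bbbk \to 0 \to \Bbbk[1] \overset{\id}{\to} \Bbbk[1]$ exhibits this.) Consequently the identity $1 = f(q)\, p_V(q,t)$ you invoke in part~\eqref{it:selfstar-pos} is unjustified, and the deduction that $p_V$ is independent of $t$ collapses. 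Repairing by passing to genuine Euler characteristics does yield $\chi_V(q) = f(q)^{-1}$, but then cancellation between cohomological degrees prevents any conclusion about an individual $H^i(V)$; in particular you cannot show it is concentrated in strictly positive internal degrees for $i>0$. The same flaw undermines your ``more cleanly'' treatment of part~\eqref{it:selfstar-van}, and is repeated verbatim in your final paragraph (``additivity of \ldots\ dimensions in each bidegree'').

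Your \emph{first} approach to part~\eqref{it:selfstar-van}, via the lowest nonvanishing cohomological degree $i_0$, is essentially the paper's argument, though what one actually obtains is an \emph{injection} $H^{i_0}(V\la n_1\ra) \hookrightarrow H^{i_0}(A)$ rather than an additive identity; this already gives the contradiction when $A = 0$. For part~\eqref{it:selfstar-pos} the paper avoids any generating-series bookkeeping. It uses the same lowest-degree injectivity to get $H^{<0}(V) = 0$ and $H^0(V) \hookrightarrow \Bbbk$, then takes the triangle $V \to \Bbbk \to Y \to$ with $Y \in V\la n_1\ra * \cdots * V\la n_k\ra$, argues that $H^0(V)\cong\Bbbk$ (else $H^0(Y)=0$, impossible), notes that $H^i(V) \cong H^{i-1}(Y)$ for all $i \ge 1$, and runs an induction on $i$: since each $H^0(V\la n_j\ra)$ sits in internal degree $n_j > 0$, $H^0(Y)$ is concentrated in strictly positive degrees, hence so is $H^1(V)$; feeding this back into $Y$ handles $H^2(V)$; and so on. This cohomological-degree induction through the triangle is the mechanism your proposal is missing.
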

\begin{proof}
\eqref{it:selfstar-van} Suppose $V \ne 0$, and let $m$ be the smallest integer such that $H^m(V) \ne 0$.  Then, for any object $X \in V\la n_1\ra * \cdots * V\la n_k\ra$, it follows that the map $H^m(V\la n_1\ra) \to H^m(X)$ is injective.  But if $X = 0$, this contradicts the assumption that $H^m(V) \ne 0$.

\eqref{it:selfstar-pos}  The argument given for part~\eqref{it:selfstar-van} shows that $H^i(V) = 0$ for $i < 0$, and that the map $H^0(V) \to H^0(\Bbbk) \cong \Bbbk$ is injective.  Let $Y \in V\la n_1 \ra * \cdots * V\la n_k \ra$ be such that there is a distinguished triangle $V \to \Bbbk \to Y \to$.  If $H^0(V) = 0$, it would follow that $H^0(Y) = 0$, leading to a contradiction with the fact that $H^0(\Bbbk) \ne 0$, so it must be that $H^0(V) \cong \Bbbk$.  We then see from that distinguished triangle that
\[
H^i(V) \cong H^{i-1}(Y) \qquad\text{for all $i \ge 1$.}
\]
Because all the $n_i$ are strictly positive, it follows from the fact that $H^0(V) \cong \Bbbk$ that $H^0(Y)$ is concentrated in strictly positive degrees, and hence so is $H^1(V)$.  Thereafter, we proceed by induction on $i$: if $H^i(V)$ is concentrated in strictly positive degrees, so is $H^i(Y)$, and therefore so is $H^{i+1}(V)$.
\end{proof}

\subsection{Quasi-hereditary categories}
\label{subsect:qhered}

Let $S$ be a set equipped with a partial order $\le$.  Assume that every principal lower set is finite, i.e., that
\begin{equation}\label{eqn:lowfin}
\text{For all $s \in S$, the set $\{ t \in S \mid t \le s \}$ is finite.}
\end{equation}
Let $\fA$ be a finite-length abelian category, and assume that one of the following holds:
\begin{itemize}
\item ``Ungraded case'': There is a fixed bijection $\Irr(\fA) \cong S$.
\item ``Graded case'':  $\fA$ is equipped with a Tate twist, and there is a fixed bijection $\Irr(\fA) \cong S \times \Z$ with the property that for a simple object $L \in \fA$, 
\[
\text{$L$ corresponds to $(s,n)$}
\qquad\text{if and only if}\qquad
\text{$L\la 1\ra$ corresponds to $(s,n+1)$}.
\]
\end{itemize}

In the ungraded case, choose a representative simple object $\Sigma_s$ for each $s \in S$, and let $\aq s$ (resp.~$\aqt s$) be the Serre subcategory of $\fA$ generated by all simple objects $\Sigma_t$ with $t \le s$ (resp.~$t < s$).  

In the graded case, let $\Sigma_s$ denote a representative simple object corresponding to $(s,0) \in S \times \Z$. In this case, $\aq s$ (resp.~$\aqt s$) denotes the Serre subcategory of $\fA$ generated by all simple objects $\Sigma_t\la n\ra$ with $t \le s$ (resp.~$t < s$) and $n \in \Z$.  More generally, for any subset $\Xi \subset S \times \Z$, we let $\ax$ denote the Serre subcategory of $\fA$ generated by the $\Sigma_t\la n\ra$ with $(t,n) \in \Xi$.

In the sequel, we will focus mostly on the graded case.  With the above notation in place, the corresponding definitions and statements for the ungraded cases can usually be obtained simply by omitting Tate twists and by changing ``$\uHom$'' and ``$\uExt$'' to ``$\Hom$'' and ``$\Ext$,'' respectively.  For instance, it is left to the reader to formulate the ungraded version of the following definition.

\begin{defn}\label{defn:qhered}
A category $\fA$ as above is said to be \emph{graded quasi-hereditary} if for each $s \in S$, there is:
\begin{enumerate}
\item an object $\de s$ and a surjective map $\phi_s: \de s \thr \Sigma_s$ such that
\[
\ker \phi_s \in \aqt s
\qquad\text{and}\qquad
\uHom(\de s,\Sigma_t) = \uExt^1(\de s,\Sigma_t) = 0\text{ if $t \not> s$.}
\]
\item an object $\na s$ and an injective map $\psi^s: \Sigma_s \hto \na s$ such that
\[
\cok \psi^s \in \aqt s
\qquad\text{and}\qquad
\uHom(\Sigma_t,\na s) = \uExt^1(\Sigma_t,\na s) = 0\text{ if $t \not> s$.}
\]
\end{enumerate}
Any object isomorphic to some $\de s\la n\ra$ is called a \emph{standard object}, and any object isomorphic to some $\na s\la n\ra$ is a \emph{costandard object}.
\end{defn}

\subsection{Quasi-exceptional sets}
\label{subsect:qexc}

We again let $S$ be a set equipped with a partial order $\le$ satisfying~\eqref{eqn:lowfin}.  Let $\fD$ be a triangulated category, either equipped with a Tate twist (the ``graded case'') or not (the ``ungraded case'').  As noted above, the definitions and lemmas below are usually stated only for the graded case.  However, our first definition comes with a caveat; see the remark below.

\begin{defn}\label{defn:qexc}
A \emph{graded quasi-exceptional set} in $\fD$ is a collection of objects $\{ \na s \}_{s \in S}$ such that the following conditions hold:
\begin{enumerate}
\item If $s \not\ge t$, then $\uHom^i(\na s, \na t) = 0$ for all $i \in \Z$.\label{it:defq-ord}
\item If $i < 0$, then $\uHom^i(\na s, \na s) = 0$, and $\uHom(\na s, \na s) \simeq \Bbbk$.\label{it:defq-self}
\item If $i > 0$ and $n \ge 0$, then $\Hom^i(\na s, \na s \la n\ra)) = 0$.\label{it:defq-mixed}
\item The objects $\{ \na s\la n\ra \mid s \in S,\ n \in \Z \}$ generate $\fD$ as a triangulated category.\label{it:defq-gen}
\end{enumerate}
For $s \in S$, we denote by $\dqt s$ the full triangulated subcategory of $\fD$ generated by all $\na t\la n\ra$ with $t < s$.
\end{defn}

\begin{rmk}\label{rmk:ungr-qexc}
An \emph{ungraded quasi-exceptional set} is defined with analogues of conditions~\eqref{it:defq-ord}, \eqref{it:defq-self}, and~\eqref{it:defq-gen} above, but \emph{without} condition~\eqref{it:defq-mixed}.  The omission of condition~\eqref{it:defq-mixed} makes the two cases substantially different.  In particular, the results of Section~\ref{sect:der-eq} apply only to the graded case.
\end{rmk}

\begin{defn}\label{defn:dual-exc}
A graded quasi-exceptional set $\{ \na s \}$ in $\fD$ is said to be \emph{dualizable} if for each $s \in S$, there is an object $\de s$ and a morphism $\iota_s: \de s \to \na s$ such that:
\begin{enumerate}
\item The cone of $\iota_s$ lies in $\dqt s$.\label{it:defdq-cone}
\item If $s > t$, $\uHom^i(\de s, \na t) = 0$ for all $i \in \Z$.\label{it:defdq-van}
\end{enumerate}
The set $\{ \de s \}$ is known as the \emph{dual quasi-exceptional set}.
\end{defn}

It follows from the second condition that $\uHom(\de s, X) = 0$ for all $X \in \dqt s$.  The proofs of the following two lemmas about a dual set are routine; we omit the details.

\begin{lem}
If $\{ \na s\}$ is a dualizable quasi-exceptional set, then the members of the dual set $\{ \de s \}$ are uniquely determined up to isomorphism. \qed
\end{lem}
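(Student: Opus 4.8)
The plan is to establish uniqueness by an extremal argument that mimics the classical uniqueness of standard objects in a quasi-hereditary category. Suppose $\{\de s\}$ and $\{\de s'\}$ are two collections of objects, each equipped with morphisms $\iota_s\colon \de s \to \na s$ and $\iota_s'\colon \de s' \to \na s$ satisfying conditions~\eqref{it:defdq-cone} and~\eqref{it:defdq-van} of Definition~\ref{defn:dual-exc}. I want to produce an isomorphism $\de s \simto \de s'$ compatible with the maps to $\na s$. The key leverage is the remark immediately preceding the lemma: condition~\eqref{it:defdq-van} implies $\uHom(\de s, X) = 0$ for all $X \in \dqt s$, and by shifting, $\uHom^i(\de s, X) = 0$ for all $i$ and all $X \in \dqt s$ (since $\dqt s$ is a triangulated subcategory, hence closed under shifts and Tate twists).

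First I would fix $s \in S$ and argue by induction on the (finite, by~\eqref{eqn:lowfin}) principal lower set $\{t \mid t \le s\}$, or equivalently work inside the subquotient picture; but in fact a direct argument suffices. Applying $\uHom(\de s, {-})$ to the distinguished triangle $\de s' \to \na s \to C_s' \to$ defining $\iota_s'$, where $C_s' \in \dqt s$, and using that $\uHom^i(\de s, C_s') = 0$ for all $i$, I get that the map $\iota_s'$ induces an isomorphism
\[
\uHom^i(\de s, \de s') \simto \uHom^i(\de s, \na s) \quad\text{for all } i.
\]
In particular, the morphism $\iota_s \in \Hom(\de s, \na s)$ lifts uniquely through $\iota_s'$ to a morphism $f\colon \de s \to \de s'$ with $\iota_s' \circ f = \iota_s$. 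By the symmetric argument, $\iota_s$ lifts through $\iota_s$ a morphism $g\colon \de s' \to \de s$ with $\iota_s \circ g = \iota_s'$. Then $\iota_s \circ (g \circ f) = \iota_s$, and the uniqueness part of the lifting statement (applied with $\de s'$ replaced by $\de s$, using $\uHom(\de s,\de s) \simto \uHom(\de s, \na s)$, which holds because the cone of $\iota_s$ lies in $\dqt s$) forces $g \circ f = \id_{\de s}$; symmetrically $f \circ g = \id_{\de s'}$. Hence $f$ is the desired isomorphism.

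The one point requiring a little care — and the closest thing to an obstacle — is verifying that the cone $C_s$ of $\iota_s$ really does kill $\uHom^i(\de s, {-})$ in \emph{all} degrees, not just degree $0$; this is where one uses that $\dqt s$ is generated as a \emph{triangulated} subcategory by the $\na t\la n\ra$ with $t < s$, so it is stable under $[1]$ and $\la 1\ra$, and that condition~\eqref{it:defdq-van} gives vanishing of $\uHom^i(\de s, \na t)$ for all $i$ when $s > t$. A dévissage over the triangulated generation then propagates the vanishing to all of $\dqt s$. Once that is in hand, the lifting-and-uniqueness bookkeeping is entirely formal, which is why it is reasonable to omit the details in the text. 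Everything goes through verbatim in the ungraded case with $\uHom$ replaced by $\Hom$.
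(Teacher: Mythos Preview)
Your argument is correct and is precisely the standard lifting argument one expects here; the paper omits the proof entirely as routine, and your write-up is exactly the kind of routine verification intended.
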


\begin{lem}\label{lem:dual-exc}
Let $\{ \na s \}$ be a dualizable quasi-exceptional set, and let $\{ \de s \}$ be its dual set.  Then:
\begin{enumerate}
\item If $s \not\le t$, then $\uHom^i(\de s, \de t) = 0$ for all $i \in \Z$.\label{it:lemq-ord}
\item If $i < 0$, then $\uHom^i(\de s, \de s) = 0$, and $\uHom(\de s, \de s) \simeq \Bbbk$.\label{it:lemq-self}
\item If $i > 0$ and $n \ge 0$, then $\Hom^i(\de s, \de s\la n\ra) = 0$.\label{it:lemq-mixed}
\item The objects $\{ \na s\la n\ra \mid s \in S,\ n \in \Z \}$ generate $\fD$ as a triangulated category.\label{it:lemq-gen}
\end{enumerate}
Furthermore, for all $i \in \Z$, there are natural isomorphisms
\[
\uHom^i(\de s, \de s) \overset{\sim}{\to} \uHom^i(\de s, \na s) \overset{\sim}{\rightarrow} \uHom^i(\na s, \na s). \qed
\]
\end{lem}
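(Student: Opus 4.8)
The plan is to extract all five assertions from the single distinguished triangle $\de s \to \na s \to C_s \to$ attached to the morphism $\iota_s\colon \de s \to \na s$, where $C_s$ denotes its cone, so that $C_s \in \dqt s$ by Definition~\ref{defn:dual-exc}\eqref{it:defdq-cone}; everything is obtained by feeding this triangle into long exact sequences. Two vanishing statements will be used repeatedly. Since $\dqt s$ is generated by the objects $\na u\la n\ra$ with $u < s$, any cohomological functor that vanishes on these generators vanishes on all of $\dqt s$; applied to $C_s$, this gives $\uHom^i(C_s,\na s) = 0$ for all $i$ (each such $u$ satisfies $u \not\ge s$, so Definition~\ref{defn:qexc}\eqref{it:defq-ord} yields $\uHom^i(\na u,\na s) = 0$), and $\uHom^i(\de s,C_s) = 0$ for all $i$ (each such $u$ satisfies $s > u$, so Definition~\ref{defn:dual-exc}\eqref{it:defdq-van} yields $\uHom^i(\de s,\na u) = 0$; this is the remark recorded just after Definition~\ref{defn:dual-exc}).

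First I would establish the ``Furthermore'' isomorphisms. Applying the cohomological functor $\uHom^i(-,\na s)$ to the triangle and using $\uHom^i(C_s,\na s) = 0$ shows that $\iota_s^*\colon \uHom^i(\na s,\na s) \to \uHom^i(\de s,\na s)$ is an isomorphism of graded vector spaces for every $i$; applying $\uHom^i(\de s,-)$ and using $\uHom^i(\de s,C_s) = 0$ shows the same for $(\iota_s)_*\colon \uHom^i(\de s,\de s) \to \uHom^i(\de s,\na s)$. Composing $(\iota_s)_*$ with the inverse of $\iota_s^*$ yields the asserted chain of natural isomorphisms, and in particular an isomorphism of graded vector spaces $\uHom^i(\de s,\de s) \cong \uHom^i(\na s,\na s)$ for all $i$. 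Parts~\eqref{it:lemq-self} and~\eqref{it:lemq-mixed} then follow at once from conditions~\eqref{it:defq-self} and~\eqref{it:defq-mixed} of Definition~\ref{defn:qexc}, while part~\eqref{it:lemq-gen} is literally condition~\eqref{it:defq-gen} of that definition (one also checks, by induction along the order using~\eqref{eqn:lowfin} and the triangles $\de t \to \na t \to C_t \to$, that the objects $\de s\la n\ra$ generate $\fD$ as well).

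It remains to prove part~\eqref{it:lemq-ord}, for which the key intermediate claim is that $\uHom^i(\de s,\na t) = 0$ for all $i$ whenever $s \neq t$. If $s > t$, this is exactly Definition~\ref{defn:dual-exc}\eqref{it:defdq-van}. If $s \not> t$ (so $s \not\ge t$, because $s \neq t$), I would apply $\uHom^i(-,\na t)$ to $\de s \to \na s \to C_s \to$: here $\uHom^i(\na s,\na t) = 0$ by Definition~\ref{defn:qexc}\eqref{it:defq-ord}, and $\uHom^i(C_s,\na t) = 0$ for all $i$ because each generator $\na u\la n\ra$ of $\dqt s$ has $u < s$, so $t \le u$ would force $t < s$, contradicting $s \not> t$; hence $u \not\ge t$, and Definition~\ref{defn:qexc}\eqref{it:defq-ord} applies once more. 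Granting the claim, part~\eqref{it:lemq-ord} follows by applying $\uHom^i(\de s,-)$ to $\de t \to \na t \to C_t \to$: when $s \not\le t$ we have $s \neq t$, so $\uHom^i(\de s,\na t) = 0$, and $C_t \in \dqt t$ is generated by objects $\na u\la n\ra$ with $u < t$, none of which equals $s$ (else $s = u < t$ would contradict $s \not\le t$), so $\uHom^i(\de s,C_t) = 0$; the long exact sequence then gives $\uHom^i(\de s,\de t) = 0$ for all $i$.

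I do not anticipate a real obstacle: the whole proof is diagram-chasing around one distinguished triangle, and the only point demanding attention is to keep straight which of the relations $s \le t$ and $s \ge t$ is being invoked where. The single place this genuinely matters is the intermediate claim above---the triangle argument establishes $\uHom^i(\de s,\na t) = 0$ only under the hypothesis $s \not> t$, so the complementary case $s > t$ must be taken directly from the defining axiom~\eqref{it:defdq-van} of a dual quasi-exceptional set rather than derived from the triangle.
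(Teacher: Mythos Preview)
Your argument is correct and is exactly the ``routine'' verification the paper has in mind (the paper omits the proof entirely). You correctly isolate the one nontrivial point, namely the intermediate claim $\uHom^i(\de s,\na t)=0$ for $s\neq t$, and your case split there is clean; everything else is indeed long-exact-sequence bookkeeping around the triangle $\de s\to\na s\to C_s\to$. Your parenthetical about the $\de s\la n\ra$ also generating $\fD$ is apt, since part~\eqref{it:lemq-gen} as printed is literally Definition~\ref{defn:qexc}\eqref{it:defq-gen} and was almost certainly intended to assert generation by the dual objects.
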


\begin{defn}
A dualizable quasi-exceptional set $\{\na s\}_{s \in S}$ with dual set $\{\de s\}_{s \in S}$, is said to be \emph{abelianesque} if we have
\[
\uHom^i(\na s,\na t) = \uHom^i(\de s,\de t) = 0
\qquad\text{for all $i< 0$.}
\]
\end{defn}

The main technical result we need about quasi-exceptional sets is the following.  

\begin{thm}\label{thm:qexc-t}
Let $\fD$ be a triangulated category with a Tate twist, and let $\{\na s\}_{s \in S}$ be an abelianesque dualizable quasi-exceptional set with dual set $\{\de s\}_{s \in S}$.  The categories
\begin{align*}
\dl 0 &= \{ X \in \fD \mid \text{$\Hom(X, \na s\la n\ra [d] = 0$ for all $n \in \Z$ and all $d < 0$} \}, \\
\dg 0 &= \{ X \in \fD \mid \text{$\Hom(\de s\la n\ra [d], X) = 0$ for all $n \in \Z$ and all $d > 0$} \}
\end{align*}
constitute a bounded $t$-structure on $\fD$.  In addition, its heart $\fA = \dl 0 \cap \dg 0$ has the following properties:
\begin{enumerate}
\item $\fA$ contains all $\de s\la n\ra$ and $\na s\la n\ra$.
\item There is a natural bijection $\Irr(\fA) \simto S \times \Z$; the simple object $\Sigma_s$ corresponding to $(s,0) \in S \times \Z$ is the image of the map $\iota_s: \de s \to \na s$. 
\item $\fA$ is a finite-length, graded quasi-hereditary category; the $\de s\la n\ra$ are the standard objects, and the $\na s\la n\ra$ are the costandard objects.
\end{enumerate}
\end{thm}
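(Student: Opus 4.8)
The plan is to build the $t$-structure by hand from the ``generation'' property of the quasi-exceptional set and then extract the quasi-hereditary structure from the vanishing axioms. First I would set up the two families of subcategories: for each subset $\Xi \subset S\times\Z$ that is a lower set (in the obvious partial order where $(t,m) < (s,n)$ iff $t < s$), let $\dx$ be the full triangulated subcategory generated by the $\na t\la m\ra$ with $(t,m) \in \Xi$, and dually the subcategory generated by the $\de t\la m\ra$. The key structural input is that, because of Definition~\ref{defn:qexc}\eqref{it:defq-ord} and Lemma~\ref{lem:dual-exc}\eqref{it:lemq-ord} together with the cone condition in Definition~\ref{defn:dual-exc}\eqref{it:defdq-cone}, each $\dx$ is a localizing/colocalizing subcategory: the inclusion admits both adjoints, realized concretely by ``truncating'' an object along its composition series in terms of the $\na s\la n\ra$'s. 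I would phrase this as a recollement-type statement and deduce that the pair $(\dl0, \dg0)$ as defined in the theorem is orthogonal and generates, hence is a $t$-structure; boundedness follows from axiom~\eqref{it:defq-gen} plus the fact that each generator $\na s\la n\ra$ or $\de s\la n\ra$ sits in a bounded range of cohomological degrees for the $t$-structure.

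Next I would verify that the $\na s\la n\ra$ and $\de s\la n\ra$ actually lie in the heart $\fA$. For $\na s$: axioms~\eqref{it:defq-ord}, \eqref{it:defq-self}, and \eqref{it:defq-mixed} plus Lemma~\ref{lem:dual-exc}\eqref{it:lemq-mixed} and the abelianesque hypothesis give $\Hom(\de t\la m\ra[d], \na s) = 0$ for $d > 0$ and $\Hom(\na s, \na t\la m\ra[d]) = 0$ for $d<0$; using the cone triangle $\de t \to \na t \to (\text{cone})$ and induction on the lower set, one upgrades the second vanishing to all $\de t\la m\ra$ in place of the second $\na t\la m\ra$, which is exactly $\na s \in \dl0 \cap \dg0$. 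The argument for $\de s$ is symmetric. I would then identify the simple objects: the composite $\Sigma_s := \mathrm{im}(\iota_s)$ is computed inside $\fA$, and one checks it is simple by showing $\uHom(\Sigma_s,\Sigma_s) = \Bbbk$ using Lemma~\ref{lem:dual-exc}'s isomorphism $\uHom(\de s,\de s) \simeq \uHom(\de s,\na s) \simeq \uHom(\na s,\na s) \simeq \Bbbk$, combined with Lemma~\ref{lem:ext-hom} to pass between $\Ext_\fA$ and $\Hom_\fD$. A standard argument (every nonzero object of $\fA$ maps to or from some $\na s\la n\ra$, using generation and the $t$-structure truncations) shows these are all the simples, giving the bijection $\Irr(\fA) \simto S\times\Z$ compatible with Tate twist; finite length then follows because the lower set below any $(s,n)$ is finite by~\eqref{eqn:lowfin}, so each object has a finite filtration with subquotients among finitely many $\Sigma_t\la m\ra$.

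Finally, to get graded quasi-hereditarity in the sense of Definition~\ref{defn:qhered}, I would produce the surjection $\phi_s : \de s \thr \Sigma_s$ (the corestriction of $\iota_s$) and check $\ker\phi_s \in \aqt s$: the cone of $\iota_s$ lies in $\dqt s$, and chasing the long exact sequence of the triangle $\de s \to \na s \to \mathrm{cone}$ through the $t$-structure shows both $\ker\phi_s$ and $\cok\psi^s$ have composition factors only among $\Sigma_t\la m\ra$ with $t < s$. The orthogonality statements $\uHom(\de s,\Sigma_t) = \uExt^1(\de s,\Sigma_t) = 0$ for $t \not> s$ come from Definition~\ref{defn:dual-exc}\eqref{it:defdq-van} (which, as remarked after that definition, gives $\uHom(\de s, X) = 0$ for $X \in \dqt s$, handling $t < s$) together with the self-$\Hom$ and $\Hom(\de s,\na s[1])$ computations from Lemma~\ref{lem:dual-exc} (handling $t = s$ and $t$ incomparable to $s$), using Lemma~\ref{lem:ext-hom} to interpret $\uExt^1_\fA$ as $\uHom^1_\fD$. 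I expect the main obstacle to be the verification that $(\dl0,\dg0)$ is genuinely a $t$-structure --- specifically, constructing the truncation triangle for an arbitrary object, which requires an inductive dévissage over the (infinite, but locally finite) poset $S\times\Z$ and a careful argument that the process converges because axiom~\eqref{it:defq-gen} places any given object in a \emph{finitely} generated subcategory; the abelianesque hypothesis is precisely what makes the truncations land in the heart rather than spreading across infinitely many degrees. The remaining steps are then bookkeeping with long exact sequences and the dictionary of Lemma~\ref{lem:ext-hom}.
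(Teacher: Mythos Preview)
Your proposal is correct in outline, but it is worth noting that the paper itself does not carry out this argument: its proof is a brief sketch that cites \cite[Propositions~1 and~2]{bez:qes} for the ungraded $t$-structure and quasi-hereditary heart, \cite[Proposition~4]{bez:ctm} for the graded adaptation, and then simply observes that the abelianesque hypothesis forces the $\de s$ and $\na s$ themselves (rather than their $t$-cohomologies ${}^tH^0(\de s)$, ${}^tH^0(\na s)$) to lie in the heart. What you have written is, in effect, a self-contained reconstruction of the content of those cited results---the recollement-style d\'evissage over finite lower sets of $S$, the identification of simples as images of $\iota_s$, and the verification of the quasi-hereditary axioms---and the shape of your argument matches Bezrukavnikov's.

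Two small corrections. First, showing $\uEnd(\Sigma_s) \cong \Bbbk$ does not by itself prove simplicity; the real argument is the one you only gesture at afterward (every nonzero object of $\fA$ receives a nonzero map from some $\de t\la m\ra$, and orthogonality then pins down the simple quotients). Second, your closing remark misidentifies the role of the abelianesque hypothesis: the $t$-structure and its truncation triangles exist without it (this is what \cite{bez:qes} proves), and convergence of the d\'evissage comes from~\eqref{eqn:lowfin} and axiom~\eqref{it:defq-gen} alone. The abelianesque condition is used only for part~(1) of the theorem, exactly as the paper says---to ensure $\de s, \na s \in \fA$ rather than merely ${}^tH^0(\de s), {}^tH^0(\na s) \in \fA$.
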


\begin{proof}[Proof sketch]
A similar statement in the ungraded case, with the ``abelian\-esque'' condition omitted, is proved in~\cite[Propositions~1 and~2]{bez:qes}.  In {\it loc.~cit.}, the standard and costandard objects in the heart are ${}^t H^0(\de s)$ and ${}^t H^0(\na s)$, where ${}^t H^0({-})$ denotes cohomology with respect to the $t$-structure in the statement of the theorem.  But the abelianesque condition clearly implies that the $\de s$ and $\na s$ already lie in the heart of the $t$-structure, so the ungraded version of the theorem follows from the aforementioned results.  The same arguments work in the graded case as well; cf.~\cite[Proposition~4]{bez:ctm}.
\end{proof}

\subsection{Projective covers}
\label{subsect:constr-proj}

We end this section with a result that lets us construct projectives in an abelian category starting from projectives in a Serre subcatgory.  Its proof is similar to that of~\cite[Theorem~3.2.1]{bgs}.

\begin{prop}\label{prop:constr-proj}
Let $\fA$ be a finite-length abelian category.  Let $L \in \fA$ be a simple object with a projective cover $M$, and let $R$ denote the kernel of $M \thr L$.  Let $\fB \subset \fA$ be the Serre subcategory of objects that do not have $L$ as a subquotient.  Let $L' \in \fB$ be a simple object.  Assume that the following conditions hold:
\begin{enumerate}
\item We have $\Hom(M,M) \simeq \Bbbk$.
\item Inside $\fB$, $L'$ admits a projective cover $P'$.
\item $\fA$ and $\fB$ are admissible subcategories of a triangulated category $\fT$, and
\begin{equation}\label{eqn:hom2-van}
\Hom^2_{\fT}(M,R) = \Hom^2_{\fT}(P',R) = 0.
\end{equation}
\end{enumerate}
Then $L'$ admits a projective cover $P$ in $\fA$, arising in a short exact sequence
\begin{equation}\label{eqn:constr-proj}
0 \to \Ext^1_{\fA}(P', M)^* \otimes M \to P \to P' \to 0.
\end{equation}
\end{prop}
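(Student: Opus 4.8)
The plan is to realize $P$ as a universal extension of $P'$ by copies of $M$, and then to verify by hand that the resulting object is a projective cover of $L'$ sitting in the asserted short exact sequence. First I would record the consequences of the hypotheses that the construction needs. Since $\Bbbk$ is algebraically closed, $\End(L) \cong \Bbbk$; hence condition~(1) forces $[M:L] = 1$ with $L$ occurring only in the head of $M$, so the kernel $R$ of $M \thr L$ has all of its composition factors distinct from $L$, i.e.\ $R$ lies in $\fB$. The simple objects of $\fB$ are precisely those of $\fA$ other than $L$, and since $\fB$ is a Serre subcategory, $\Ext^1_\fA(X,Y) \cong \Ext^1_\fB(X,Y)$ for $X,Y \in \fB$; projectivity of $P'$ in $\fB$ then gives $\Ext^1_\fA(P',Y) = 0$ for every $Y \in \fB$, and in particular $\Ext^1_\fA(P',R) = 0$. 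Finally, admissibility of $\fA$ and $\fB$ in $\fT$ together with Lemma~\ref{lem:ext-hom} identifies $\Ext^1_\fA(P',M)$ with $\Hom^1_\fT(P',M)$ (in particular it is finite-dimensional) and turns the hypothesis~\eqref{eqn:hom2-van} into $\Ext^2_\fA(M,R) = \Ext^2_\fA(P',R) = 0$. Setting $V = \Ext^1_\fA(P',M)$, I would take for $P$ the universal extension
\[
0 \to V^* \otimes M \to P \to P' \to 0,
\]
namely the one whose class in $\Ext^1_\fA(P',V^*\otimes M) \cong \End(V)$ is $\id_V$; this is already the sequence~\eqref{eqn:constr-proj}.

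The next step is to show $P$ is projective, for which it suffices, $\fA$ being of finite length, to prove $\Ext^1_\fA(P,S) = 0$ for every simple $S$. Applying $\Hom_\fA(-,S)$ to the sequence above and using that $M$ is projective in $\fA$ (so $\Ext^1_\fA(M,-) = 0$), one gets $\Ext^1_\fA(P,S) \cong \cok\bigl(\Hom_\fA(V^*\otimes M,S) \to \Ext^1_\fA(P',S)\bigr)$. For $S \in \fB$ the target already vanishes. For $S \cong L$ the source is $V \otimes \End(L) \cong V$, and the universal property of the extension identifies the connecting map with the map $V = \Ext^1_\fA(P',M) \to \Ext^1_\fA(P',L)$ induced by $M \thr L$; the long exact sequence of $\Ext^\bullet_\fA(P',-)$ attached to $0 \to R \to M \to L \to 0$, together with $\Ext^2_\fA(P',R) = 0$, shows this map is surjective. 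Hence $\Ext^1_\fA(P,S) = 0$ in all cases. (One can equivalently run this as a d\'evissage showing $\Ext^1_\fA(P,X) = 0$ for all $X$ by induction on length, and it is there that $\Ext^2_\fA(M,R) = 0$ is used, to propagate surjectivity of the connecting map past composition factors isomorphic to $L$.)

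Then I would identify the head of $P$, using the same long exact sequences. Since $\Hom_\fA(V^*\otimes M,S) = V \otimes \Hom_\fA(M,S)$ vanishes unless $S \cong L$, we get $\Hom_\fA(P,S) \cong \Hom_\fA(P',S)$ for simple $S \in \fB$, which is $\Bbbk$ for $S = L'$ and $0$ otherwise because $P'$ is the projective cover of $L'$ in $\fB$; and $\Hom_\fA(P,L)$ is the kernel of the connecting map $V \to \Ext^1_\fA(P',L)$, which is injective because $\Ext^1_\fA(P',R) = 0$ forces $\Ext^1_\fA(P',M) \to \Ext^1_\fA(P',L)$ to be injective. So $P$ has simple head $L'$, and a projective object with simple head in a finite-length category is the projective cover of that head; thus $P$ is a projective cover of $L'$, completing the argument.

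The step I expect to be the main obstacle is the case $S \cong L$ above: correctly identifying the connecting map with the morphism induced by $M \thr L$, and controlling the $\Ext^2$-groups $\Ext^2_\fA(P',R)$ and $\Ext^2_\fA(M,R)$. It is exactly at this point that admissibility in $\fT$ and Lemma~\ref{lem:ext-hom} are indispensable, since they are what convert the hypothesis~\eqref{eqn:hom2-van}, stated in $\fT$, into vanishing in $\fA$; a smaller but genuinely necessary point is that condition~(1) places $R$ in $\fB$, which is what makes $\Ext^1_\fA(P',R)$ vanish and hence pins down the head of $P$.
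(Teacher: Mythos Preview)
Your argument is correct and follows the same universal-extension strategy as the paper: define $P$ via the canonical class $\id_V \in \End(V) \cong \Ext^1(P', V^*\otimes M)$, then verify projectivity and the head computation by chasing long exact sequences.

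There is one difference in execution worth noting. To obtain $\Ext^1(P,L)=0$, the paper first proves $\Ext^1(P,M)=0$ by identifying the connecting map $\Hom(E^*\otimes M, M)\to \Ext^1(P',M)$ with $\id_E$, and then uses the sequence $\Ext^1(P,M)\to \Ext^1(P,L)\to \Hom^2_\fT(P,R)$; vanishing of the last term requires \emph{both} hypotheses $\Hom^2_\fT(M,R)=0$ and $\Hom^2_\fT(P',R)=0$, since $P$ is built from $P'$ and copies of $M$. You instead identify the connecting map $\Hom(V^*\otimes M, L)\to \Ext^1(P',L)$ with $\pi_*\colon \Ext^1(P',M)\to \Ext^1(P',L)$, and deduce its surjectivity from $\Ext^2_\fA(P',R)=0$ alone (and its injectivity from $\Ext^1_\fA(P',R)=0$). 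So your main line actually uses only one of the two vanishings in~\eqref{eqn:hom2-van}. Your parenthetical remark suggesting that $\Ext^2_\fA(M,R)=0$ enters via an alternative d\'evissage is therefore not quite right: once $\Ext^1(P,S)=0$ is established for every simple $S$, the extension to arbitrary $X$ follows immediately by induction on length, with no further input needed.
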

\begin{proof}
Let $E = \Ext^1(P',M)$, and consider the identity map $\id: E \to E$ as an element of $\Hom(E,E)$.  Following this element through the chain of isomorphisms
\[
\Hom(E,E) \simeq E^* \otimes E \simeq E^* \otimes \Ext^1(P',M) \simeq \Ext^1(P', E^* \otimes M),
\]
we obtain a canonical element $\nu \in \Ext^1(P', E^* \otimes M)$.  Form the short exact sequence corresponding to $\nu$, and define $P$ to be its middle term.  We have thus constructed the sequence~\eqref{eqn:constr-proj}.  We must now show that $P$ is a projective cover of $L'$.

Because $\Hom(M,M) \simeq \Bbbk$, we have natural isomorphisms
\[
\Hom(E^* \otimes M, M) \simeq E \otimes \Hom(M,M) \simeq E.
\]
Consider now the following commutative diagram:
\[
\xymatrix{
\Hom(E^* \otimes M,M) \ar[r]^-{{-}\circ \nu} \ar[d]_{\wr} & \Ext^1(P', M) \ar@{=}[d] \\
E \ar[r]^{\id} & E}
\]
We see that the natural map $\Hom(E^* \otimes M,M)) \to \Ext^1(P', M)$ is an isomorphism, so from the long exact sequence associated to~\eqref{eqn:constr-proj}, we find that the map
\begin{equation}\label{eqn:pl-hom1}
\Hom(P', M) \to \Hom(P, M)
\end{equation}
is an isomorphism as well, and that
\[
\Ext^1(P, M) \to \Ext^1(E^* \otimes M, M)
\]
is injective. But $\Ext^1(M, M) = 0$ because $M$ is projective, so
\begin{equation}\label{eqn:pl-ext1}
\Ext^1(P, M) = 0.
\end{equation}

Before proceeding, we observe that $R \in \fB$.  Otherwise, if $R$ had a subquotient isomorphic to $L$, there would be a nonzero map $M \to R$, since $M$ is the projective cover of $L$.  But the composition $M \to R \hto M$ yields a nonscalar element of $\Hom(M,M)$, a contradiction.

Next, for any object $X \in \fB$, we claim that
\[
\Hom(E^* \otimes M, X) = \Ext^1(E^* \otimes M, X) = 0.
\]
The former holds because $L$, the unique simple quotient of $M$, does not occur in any composition series for $X$, and the latter holds because $M$ is projective.  Then, from the long exact sequence obtained by applying $\Ext^i(\cdot, X)$ to~\eqref{eqn:constr-proj}, we obtain the following isomorphisms for any $X \in \fB$:
\begin{gather}
\Hom(P', X) \overset{\sim}{\to} \Hom(P, X), \label{eqn:pl-hom2} \\
\Ext^1(P', X) \overset{\sim}{\to} \Ext^1(P, X). \notag
\end{gather}
Since $P'$ is a projective object of $\fB$, the latter isomorphism actually implies
\begin{equation}
\Ext^1(P', X) = \Ext^1(P, X) = 0.\label{eqn:pl-ext2}
\end{equation}
Form the following commutative diagram with exact rows:
\[
\xymatrix{
\Hom(P', R) \ar[r] \ar[d] &
\Hom(P', M) \ar[r] \ar[d] &
\Hom(P', L) \ar[r] \ar[d] &
\Ext^1(P', R) \ar[d] \\
\Hom(P, R) \ar[r] &
\Hom(P, M) \ar[r] &
\Hom(P, L) \ar[r] &
\Ext^1(P, R)
}
\]
The first vertical map is an isomorphism by~\eqref{eqn:pl-hom2}, and the second by~\eqref{eqn:pl-hom1}.  Both terms in the fourth column vanish by~\eqref{eqn:pl-ext2}.  Thus, the third vertical map is also an isomorphism.  But $\Hom(P',L) = 0$, so
\[
\Hom(P,L) = 0
\]
as well.  Combining this with~\eqref{eqn:pl-hom2} and the fact that $L'$ is the unique simple quotient of $P'$, we see that it is the unique simple quotient of of $P$ as well.

Moreover, from~\eqref{eqn:pl-ext2}, we see that $\Ext^1(P,X) = 0$ for all $X \in \fB$.  To prove that $P$ is a projective object of $\fA$, it remains only to show that $\Ext^1(P,L) = 0$.  Using Lemma~\ref{lem:ext-hom}, we may form the long exact sequence
\[
\cdots \to \Ext^1(P,M) \to \Ext^1(P,L) \to \Hom^2_{\fT}(P,R) \to \cdots.
\]
We saw in~\eqref{eqn:pl-ext1} that the first term vanishes, and the assumption~\eqref{eqn:hom2-van} implies that the last term does as well.  Thus, $\Ext^1(P,L) = 0$, as desired.
\end{proof}

\section{Derived equivalences from quasi-exceptional sets}
\label{sect:der-eq}

In this section, $\fD$ will be a triangulated category equipped with a Tate twist and an abelianesque dualizable graded quasi-exceptional set $\{\na s\}_{s \in S}$ with dual set $\{\de s\}$, with $S$ satisfying~\eqref{eqn:lowfin}.  Let $\fA$ denote the heart of $t$-structure on $\fD$ as in Theorem~\ref{thm:qexc-t}.  Under mild assumptions, there is a natural $t$-exact functor of triangulated categories 
\[
\real: \Db\fA \simto \fD,
\]
called a \emph{realization functor}. For a construction of $\real$ in various settings, see~\cite{ar:kdsf, bei:dcps,bbd}.  The goal of this section is to prove that under an additional assumption (the ``effaceability property'' of Section~\ref{subsect:efface}), this is an equivalence of categories.  

Below, Sections~\ref{subsect:qstd}--\ref{subsect:convex} contain a number preparatory results.  The derived equivalence result, Theorem~\ref{thm:qexc-dereq}, is proved in Section~\ref{subsect:main}.

\subsection{Standard filtrations and quasistandard objects}
\label{subsect:qstd}

We begin with a number of technical lemmas on the existence and properties of certain objects which are filtered by standard objects.  Most of the results of this section are trivial in the case where the quasi-exceptional set is actually \emph{exceptional}, meaning that $\uHom^i(\na s, \na s) = 0$ for $i > 0$.

\begin{defn}
Let $X \in \fA$.  A filtration
\[
0 = X_0 \subset X_1 \subset \cdots \subset X_k = X
\]
is called a \emph{standard filtration} if there are elements $s_1, \ldots, s_k \in S$ and integers $n_1, \ldots, n_k \in \Z$ such that $X_i/X_{i-1} \simeq \de {s_i}\la n_i\ra$ for each $i$.  If $X$ has such a filtration with $s_1 = \cdots = s_k = s$, $X$ is said to be \emph{$s$-quasistandard}.  The notions of \emph{costandard filtration} and \emph{$s$-quasicostandard} are defined similarly.
\end{defn}

\begin{defn}
The \emph{standard order} is the partial order $\preceq_\Delta$ on $S \times \Z$ given by
\[
(s,n) \preceq_\Delta (t,m)
\qquad\text{if $s < t$, or else if $s = t$ and $n \ge m$.}
\]
Similarly, the \emph{costandard order} $\preceq_\nabla$ is given by
\[
(s,n) \preceq_\nabla (t,m)
\qquad\text{if $s < t$, or else if $s = t$ and $n \le m$.}
\]
A member of a subset $\Xi \subset S \times \Z$ is said to be \emph{standard-maximal} (resp.~\emph{costandard-maximal}) if it is a maximal element of $\Xi$ with respect to $\preceq_\Delta$ (resp.~$\preceq_\nabla$).
\end{defn}

A number of statements in this section, starting with the following lemma, contain both a ``standard'' part and a ``costandard'' part.  In each instance, we will only prove the part pertaining to standard objects.  It is, of course, a routine matter to adapt these arguments to the costandard case.

For the maps $\phi_s: \de s \to \Sigma_s$ and $\psi^s: \Sigma_s \to \na s$ as in Definition~\ref{defn:qhered}, we introduce the notation
\[
R_s = \ker \phi_s, \qquad Q_s = \cok \psi^s.
\]

\begin{lem}\label{lem:m-proj}
If $(s,n)$ is standard-maximal in $\Xi$, then $\de s\la n\ra$ is a projective cover of $\Sigma_s\la n\ra$ in $\ax$.  If $(s,n)$ is costandard-maximal, then $\na s\la n\ra$ is an injective hull of $\Sigma_s\la n\ra$.
\end{lem}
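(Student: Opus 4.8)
The plan is to prove the standard assertion; the costandard one then follows by the symmetric argument. First I would verify that $\de s\la n\ra$ lies in $\ax$. Since $(s,n)$ is standard-maximal in $\Xi$, for every composition factor $\Sigma_t\la m\ra$ of $R_s\la n\ra = \ker(\phi_s\la n\ra)$ we have $t<s$ by the quasi-hereditary axiom (Definition~\ref{defn:qhered}(1)), and in particular $(t,m)\in\Xi$ need not automatically hold — so the first thing to pin down is \emph{which} $(t,m)$ can occur and why they lie in $\Xi$. Here standard-maximality is used: if $\Sigma_t\la m\ra$ is a subquotient of $\de s\la n\ra$ with $t<s$, then $(t,m)\prec_\Delta(s,n)$, and since $(s,n)$ is maximal in $\Xi$ this forces $(t,m)\in\Xi$ (otherwise $\Xi\cup\{(t,m)\}$ would contradict maximality — one has to be slightly careful, but the point is that $\ax$ being a Serre subcategory generated by the indicated simples, membership of $\de s\la n\ra$ reduces to all its composition factors being among the $\Sigma_t\la m\ra$ with $(t,m)\in\Xi$, which standard-maximality guarantees). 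This shows $\de s\la n\ra\in\ax$, and it is an object with simple top $\Sigma_s\la n\ra$ via $\phi_s\la n\ra$.

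Next I would show $\de s\la n\ra$ is projective in $\ax$, i.e. $\uExt^1_{\ax}(\de s\la n\ra,\Sigma_t\la m\ra)=0$ for every $(t,m)\in\Xi$. By Lemma~\ref{lem:ext-hom}, since $\ax$ is admissible in $\fD$ (it is a Serre subcategory of the heart $\fA$), the map $\uExt^1_{\ax}(\de s\la n\ra,\Sigma_t\la m\ra)\to\uHom^1_{\fD}(\de s\la n\ra,\Sigma_t\la m\ra)$ is an isomorphism, so it suffices to compute the $\uHom^1$ in $\fD$. For $(t,m)\in\Xi$ with $(t,m)\ne(s,n)$, standard-maximality of $(s,n)$ gives $t\not> s$, whence $\uHom^1(\de s,\Sigma_t\la m-n\ra)=0$ directly from Definition~\ref{defn:qhered}(1) (for $t\ne s$ use $t\not>s$; for $t=s$ one has $m\ge n$ is \emph{not} forced, but $(s,m)\in\Xi$ maximal forces $m=n$ actually — no: one should instead observe that $\Sigma_s\la m\ra$ with $m\ne n$ cannot satisfy $(s,m)\preceq_\Delta(s,n)$ and $(s,m)\in\Xi$ both unless $m=n$... let me restate: for $(t,m)\in\Xi$, maximality forces either $t<s$, or $t=s$ and $m=n$). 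In the remaining case $(t,m)=(s,n)$, I need $\uExt^1_{\ax}(\de s\la n\ra,\Sigma_s\la n\ra)=0$; this again is $\uHom^1(\de s,\Sigma_s)$ in $\fD$, and one extracts its vanishing from Definition~\ref{defn:qhered}(1) applied with $t=s$, $t\not>s$. Thus $\de s\la n\ra$ is projective in $\ax$, surjects onto $\Sigma_s\la n\ra$, and has simple top $\Sigma_s\la n\ra$ (since $\ker\phi_s\in\aqt s$, so $\Sigma_s\la n\ra$ does not appear in the radical — and standard-maximality ensures no other simple of $\Xi$ sits on top), so it is the projective cover.

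The main obstacle I anticipate is the bookkeeping around standard-maximality: making precise the claim that all composition factors and all relevant $\uExt^1$-targets are controlled by the order $\preceq_\Delta$ and that maximality of $(s,n)$ in $\Xi$ rules out the ``bad'' cases. Concretely, one must rule out that some $\Sigma_t\la m\ra$ with $(t,m)\in\Xi$ but $(t,m)\not\preceq_\Delta(s,n)$ could receive a nonzero $\uExt^1$ from $\de s\la n\ra$ inside $\ax$; the resolution is that any such extension would have to be built from simples $\preceq_\Delta(s,n)$ on one side, and Definition~\ref{defn:qhered}(1) kills exactly the $t\not>s$ contributions while the $t>s$ contributions don't arise because $(t,m)\in\Xi$ with $t>s$ would contradict standard-maximality of $(s,n)$. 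Once this is organized cleanly — most efficiently by noting $\ax$ is itself a finite-length category in which $\Sigma_s\la n\ra$ is a maximal simple for $\preceq_\Delta$, so that $\de s\la n\ra$ plays the role of the "biggest" standard object and is automatically projective — the proof is a short diagram-free argument. I would present it in that streamlined form, citing Lemma~\ref{lem:ext-hom} for the passage between $\Ext^1_{\ax}$ and $\uHom^1_{\fD}$, and Definition~\ref{defn:qhered} for the vanishing inputs.
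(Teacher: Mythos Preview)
Your bookkeeping around standard-maximality is wrong, and this is a real gap. Standard-maximality of $(s,n)$ in $\Xi$ only says that no element of $\Xi$ is strictly larger than $(s,n)$ for $\preceq_\Delta$; it does \emph{not} force every $(t,m)\in\Xi$ to satisfy $(t,m)\preceq_\Delta(s,n)$, nor does it make $\Xi$ a lower set. In particular, for $(t,m)\in\Xi$ with $t=s$, maximality gives only $m\ge n$, not $m=n$. So the case $t=s$ with $m>n$ must be handled, and your argument does not address it. (Note also that Definition~\ref{defn:qhered}, read literally with ``$t\not>s$'', would give $\uHom(\de s,\Sigma_s)=0$, which is absurd; the intended hypothesis is $t\not\ge s$, exactly as the paper's own proof cites it. So you cannot simply invoke the quasi-hereditary axiom at $t=s$.) Your first paragraph, arguing that $\de s\la n\ra\in\ax$, rests on the same confusion: maximality of $(s,n)$ does not force composition factors of $R_s\la n\ra$ to lie in $\Xi$. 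The paper does not argue this point at all; in the applications (Proposition~\ref{prop:qstd}, Lemma~\ref{lem:xi-std}) the set $\Xi$ is chosen so that $\de s\la n\ra\in\ax$ is clear for other reasons.

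The ingredient you are missing for the case $t=s$, $m\ge n$ is the \emph{mixed} condition of the graded quasi-exceptional set, and this is exactly what the paper uses. From the short exact sequence $0\to R_s\la m\ra\to\de s\la m\ra\to\Sigma_s\la m\ra\to 0$ and the vanishing $\uHom^\bullet(\de s\la n\ra,R_s\la m\ra)=0$ (since $R_s\in\aqt s\subset\dqt s$, cf.\ Definition~\ref{defn:dual-exc}\eqref{it:defdq-van}), one gets
\[
\Ext^1(\de s\la n\ra,\Sigma_s\la m\ra)\cong\Ext^1(\de s\la n\ra,\de s\la m\ra),
\]
and the right-hand side vanishes for $m\ge n$ by Lemma~\ref{lem:dual-exc}\eqref{it:lemq-mixed}. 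This is precisely where condition~\eqref{it:defq-mixed} of Definition~\ref{defn:qexc} (which, per Remark~\ref{rmk:ungr-qexc}, has no ungraded analogue) enters the picture.
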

\begin{proof}
We already know that $\de s\la n\ra$ has $\Sigma_s\la n\ra$ as its unique simple quotient, and that $\Ext^1(\de s\la n\ra, \Sigma_t\la m\ra) = 0$ whenever $t \not\ge s$.  To prove that $\de s\la n\ra$ is projective in $\ax$, it remains to show that
\[
\Ext^1(\de s\la n\ra,\Sigma_s\la m\ra) = 0 \qquad\text{if $m \ge n$.}
\]
Consider the short exact sequence
\[
0 \to R_s\la m\ra \to \de s\la m\ra \to \Sigma_s\la m\ra \to 0.
\]
Since $R_s\la m\ra \in \aqt s$, we have $\Hom^i(\de s\la n\ra, R_s\la m\ra) = 0$ for all $i \ge 0$.  It follows that there is an isomorphism
\[
\Ext^1(\de s\la n\ra, \de s\la m\ra) \overset{\sim}{\to} \Ext^1(\de s\la n\ra, \Sigma_s\la m\ra).
\]
When $m \ge n$, Lemma~\ref{lem:dual-exc}\eqref{it:lemq-mixed} tells us that $\Ext^1(\de s\la n\ra, \de s\la m\ra) = 0$, as desired.
\end{proof}

\begin{prop}\label{prop:qstd}
For each $k \ge 0 $, there is an $s$-quasistandard object $\tde sk$ with the following properties:
\begin{enumerate}
\item $\Hom(\tde sk, \Sigma_s) \simeq \Bbbk$, and $\Hom(\tde sk, \Sigma_t\la m\ra) = 0$ if $(t,m) \ne (s,0)$.\label{it:qstd-quot}
\item $\Ext^1(\tde sk, \Sigma_t\la m\ra) = 0$ if $(t,m) \preceq_\Delta (s,-k)$.\label{it:qstd-proj}
\item $\tde sk$ has a standard filtration whose subquotients are various $\de s\la m\ra$ with $-k \le m \le 0$.\label{it:qstd-nosub}
\end{enumerate}
Similarly, there is an $s$-quasicostandard object $\tna sk$ such that
\begin{enumerate}
\item $\Hom(\Sigma_s, \tna sk) \simeq \Bbbk$, and $\Hom(\Sigma_t\la m\ra, \tna sk) = 0$ if $(t,m) \ne (s,0)$.
\item $\Ext^1(\Sigma_t\la m\ra, \tna sk) = 0$ if $(t,m) \preceq_\nabla (s,k)$.
\item $\tna sk$ has a costandard filtration whose subquotients are various $\na s\la m\ra$ with $0 \le m \le k$.
\end{enumerate}
\end{prop}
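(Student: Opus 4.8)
The plan is to construct objects $\tde sk$ with properties~\eqref{it:qstd-quot}--\eqref{it:qstd-nosub} by induction on $k$, starting from $\tde s0 = \de s$. For this base object condition~\eqref{it:qstd-nosub} is vacuous, condition~\eqref{it:qstd-quot} holds because $\Sigma_s$ is the unique simple quotient of $\de s$ and occurs with multiplicity one, and condition~\eqref{it:qstd-proj} is exactly what Definition~\ref{defn:qhered} records (for $t < s$) together with the $\Ext^1$-computation in the proof of Lemma~\ref{lem:m-proj} (for $t = s$ and $m \ge 0$). The costandard objects $\tna sk$ are produced by the dual construction, built from copies of $\na s\la m\ra$ with $0 \le m \le k$, and I will not discuss that case further.

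For the inductive step, assuming $\tde sk$ is in hand, I would put $W = \Ext^1_\fA(\tde sk, \de s\la -(k+1)\ra)$ and let $\tde s{k+1}$ be the middle term of the universal extension
\[
0 \to W^* \otimes \de s\la -(k+1)\ra \to \tde s{k+1} \to \tde sk \to 0,
\]
namely the one classified by the element of $\Ext^1_\fA(\tde sk, W^*\otimes \de s\la -(k+1)\ra) \cong \End(W)$ corresponding to $\id_W$. Since its kernel is a direct sum of copies of $\de s\la -(k+1)\ra$ and $\tde sk$ satisfies~\eqref{it:qstd-nosub}, the object $\tde s{k+1}$ is immediately $s$-quasistandard and has a filtration of the type required by~\eqref{it:qstd-nosub}.

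The substance of the argument is the vanishing
\[
\Ext^i_\fA(\de s, Z) = 0 \qquad\text{for all $i \ge 0$ and all $Z \in \aqt s$,}
\]
whence also $\Ext^i_\fA(\tde sk, Z) = 0$ for $Z \in \aqt s$, because $\tde sk$ is $s$-quasistandard. By dévissage this comes down to $\Ext^i_\fA(\de s, \Sigma_t\la j\ra) = 0$ for all $i$ whenever $t < s$, and here the quasi-exceptional structure of $\fD$ is what makes things work: Definition~\ref{defn:dual-exc}\eqref{it:defdq-van} gives $\Hom^i_\fD(\de s, \na t\la j\ra) = 0$ for all $i$ when $t < s$, so Lemma~\ref{lem:ext-hom}, bootstrapped upward in $i$, kills every $\Ext^i_\fA(\de s, \na t\la j\ra)$; feeding the short exact sequence $0 \to \Sigma_t \to \na t \to Q_t \to 0$ — in which $Q_t \in \aqt t$ has composition factors $\Sigma_u\la m\ra$ with $u < t$ — into an induction on the length of the longest chain below $t$ in $S$ (base case $\Sigma_t = \na t$) then completes the claim.

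Granting this, conditions~\eqref{it:qstd-quot} and~\eqref{it:qstd-proj} fall out of the long exact sequence of the defining extension. For~\eqref{it:qstd-quot} one gets $\Hom(\tde s{k+1},\Sigma_t\la m\ra) \cong \Hom(\tde sk,\Sigma_t\la m\ra)$ whenever $(t,m)\ne(s,-(k+1))$, because $\de s\la -(k+1)\ra$ has $\Sigma_s\la -(k+1)\ra$ as its unique simple quotient; for $(t,m)=(s,-(k+1))$ one imitates the diagram chase in Proposition~\ref{prop:constr-proj} — using $\End(\de s)\simeq\Bbbk$ (Lemma~\ref{lem:dual-exc}\eqref{it:lemq-self}), the defining property of the universal extension, the sequence $0 \to R_s\la -(k+1)\ra \to \de s\la -(k+1)\ra \to \Sigma_s\la -(k+1)\ra \to 0$, and the displayed vanishing applied to $R_s\la -(k+1)\ra\in\aqt s$ — to see that the connecting map $\Hom(W^*\otimes\de s\la -(k+1)\ra,\Sigma_s\la -(k+1)\ra)\to\Ext^1_\fA(\tde sk,\Sigma_s\la -(k+1)\ra)$ is an \emph{isomorphism}, which forces $\Hom(\tde s{k+1},\Sigma_s\la -(k+1)\ra)=0$. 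Reading that same statement as a surjection disposes of the only delicate case of~\eqref{it:qstd-proj}: for $(t,m)\preceq_\Delta(s,-(k+1))$, the group $\Ext^1_\fA(\tde s{k+1},\Sigma_t\la m\ra)$ is trapped between the cokernel of that connecting map and $W^*\otimes\Ext^1_\fA(\de s,\Sigma_t\la m+k+1\ra)$; the latter vanishes by the displayed vanishing when $t<s$ and by the proof of Lemma~\ref{lem:m-proj} when $t=s$ (as then $m+k+1\ge 0$), and the former vanishes by induction when $(t,m)\preceq_\Delta(s,-k)$ and by the surjectivity just noted when $(t,m)=(s,-(k+1))$. I expect the main obstacle to be the displayed $\Ext$-vanishing — it is the only step that genuinely uses the hypotheses; everything after it is bookkeeping with long exact sequences. (One also needs $W$ to be finite-dimensional, so that $\tde s{k+1}$ has finite length; this is where a standing Hom-finiteness hypothesis on $\fD$, or the finiteness of $\fA$, is used.)
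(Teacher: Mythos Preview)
Your argument is correct and follows exactly the paper's route: build $\tde sk$ inductively as the universal extension of $\tde s{k-1}$ by copies of $\de s\la -k\ra$, which is precisely what Proposition~\ref{prop:constr-proj} constructs. The paper simply invokes that proposition after checking its $\Hom^2_\fD$-vanishing hypothesis directly from Definition~\ref{defn:dual-exc}\eqref{it:defdq-van} (since $R_s\la -k\ra\in\dqt s$), whereas you replay the proposition's diagram chase inline.

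One small overstatement worth flagging: the bootstrap via Lemma~\ref{lem:ext-hom} does not establish $\Ext^i_\fA(\de s,\na t\la j\ra)=0$ for \emph{all} $i$. That lemma's inductive hypothesis is global --- the comparison map must be an isomorphism for all pairs of objects in degrees $\le k$ --- and you only know this for $k\le 1$. So the bootstrap yields injectivity of $\theta^2$, hence $\Ext^2_\fA(\de s,\na t\la j\ra)=0$, and then your chain induction carries this to $\Ext^i_\fA(\de s,\Sigma_t\la j\ra)=0$ for $i\le 2$; but it does not continue to $i\ge 3$. Fortunately $i\le 2$ is all your long-exact-sequence bookkeeping ever uses: the deepest call is $\Ext^2_\fA(\tde sk, R_s\la -(k+1)\ra)=0$, needed to make the connecting map onto $\Ext^1_\fA(\tde sk,\Sigma_s\la -(k+1)\ra)$ surjective. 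So the proof stands as written. The paper sidesteps this subtlety by taking the relevant degree-$2$ vanishing in $\fD$ rather than in $\fA$, where it is immediate.
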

\begin{proof}
We proceed by induction on $k$.  When $k = 0$, we set $\tde s0 = \de s$. For this object, parts~\eqref{it:qstd-quot} and~\eqref{it:qstd-proj} are contained in Definition~\ref{defn:qhered}, and part~\eqref{it:qstd-nosub} is trivial.  Suppose now that $k > 0$, and that we have already defined $\tde s{k-1}$ with the desired properties.  Let
\[
\Xi = \{ (t,m) \mid (t,m) \preceq_\Delta (s,-k) \},
\]
and let $\Psi = \Xi \smallsetminus (s,-k)$.

We will define $\tde sk$ by invoking Proposition~\ref{prop:constr-proj}, but we must first check that the hypotheses of that proposition are satisfied.  Parts~\eqref{it:qstd-quot} and~\eqref{it:qstd-proj} say that $\tde s{k-1}$ is a projective cover of $\Sigma_s$ in $\axg {\Psi}$.  By Lemma~\ref{lem:m-proj}, we have that $\de s\la -k\ra$ is a projective cover of $\Sigma_s\la -k\ra$ in $\ax$.  Moreover, because $R_s\la -k\ra \in \dqt s$, we know from Definition~\ref{defn:dual-exc}\eqref{it:defdq-van} that
\[
\Hom^2(\de s\la -k\ra, R_s\la -k\ra) = \Hom^2(\tde s{k-1}, R_s\la -k\ra) = 0.
\]
(The latter vanishing holds because $\tde s{k-1}$ is $s$-quasistandard.)  

Let $\tde sk$ be the projective cover of $\Sigma_s$ in $\ax$ obtained from Proposition~\ref{prop:constr-proj}.  It is clear then that parts~\eqref{it:qstd-quot} and~\eqref{it:qstd-proj} of the proposition hold for $\tde sk$.  Moreover, we have an exact sequence
\[
0 \to \Ext^1(\tde s{k-1}, \de s\la -k\ra)^* \otimes \de s\la -k\ra \to \tde sk \to \tde s{k-1} \to 0
\]
from which we can see that part~\eqref{it:qstd-nosub} holds as well.
\end{proof}

\subsection{Effaceability}
\label{subsect:efface}

For the remainder of Section~\ref{sect:der-eq}, we assume that the quasi-exceptional set $\{\na s\}$ has the following additional property.

\begin{defn}\label{defn:efface}
An abelianesque quasi-exceptional set $\{\na s\}_{s \in S}$ is said to have the \emph{effaceability property} if the following two conditions hold:
\begin{enumerate}
\item For any morphism $f: X[-d] \to \de s$ where $d > 0$ and $X$ is $s$-quasistandard, there is an object $Y \in \aq s$ and an injective map $g: \de s \hto Y$ such that $g \circ f = 0$.\label{it:defeff-std}
\item For any morphism $f: \na s \to X[d]$ where $d > 0$ and $X$ is $s$-quasicostandard, there is an object $Y \in \aq s$ and a surjective map $h: Y \thr \na s$ such that $f \circ h = 0$.\label{it:defeff-costd}
\end{enumerate}
\end{defn}

\begin{lem}\label{lem:efface}
For any morphism $f: X[-d] \to \de s$ where $d > 0$ and $X$ is $s$-quasistandard, there is an $s$-quasistandard object $Y$ and an injective map $g: \de s \hto Y$ such that $g \circ f = 0$.  Moreover, every standard subquotient of $Y/g(\de s)$ is isomorphic to some $\de s\la m\ra$ with $m > 0$.
\end{lem}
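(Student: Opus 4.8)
The plan is to upgrade the effaceability axiom, Definition~\ref{defn:efface}\eqref{it:defeff-std}, which merely gives an injection $g\colon \de s \hto Y$ with $Y \in \aq s$, into one where $Y$ is actually $s$-quasistandard, and moreover the standard subquotients of the cokernel $Y/g(\de s)$ involve only strictly positive Tate twists $\de s\la m\ra$. The strategy is to take the $Y \in \aq s$ provided by the axiom, project it onto something $s$-quasistandard using the quasistandard objects $\tde sk$ of Proposition~\ref{prop:qstd}, and then argue that a large-enough $\tde sk$ surjects onto a suitable quotient of $Y$ through which $\de s$ still embeds and $f$ still dies.

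First I would record that since $Y \in \aq s$, every composition factor of $Y$ is of the form $\Sigma_t\la m\ra$ with $(t,m)\preceq_\Delta (s,0)$ in the standard order, i.e.\ with $t < s$, or $t = s$ and $m \le 0$. The embedding $g\colon \de s \hto Y$ has image containing the unique copy of $\Sigma_s = \Sigma_s\la 0\ra$ we care about. The idea is then to form, for $k$ large, the object $\tde sk$ from Proposition~\ref{prop:qstd}: by part~\eqref{it:qstd-proj} it is projective in $\axg{\Xi}$ for $\Xi = \{(t,m) \mid (t,m)\preceq_\Delta(s,-k)\}$, and by part~\eqref{it:qstd-quot} it has simple top $\Sigma_s$. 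I want to choose $k$ so that a certain truncation of $Y$ lies in the Serre subcategory where $\tde sk$ is projective, and such that $\tde sk$ maps onto $\de s$ compatibly. Concretely: let $Y'$ be the largest subobject of $Y$ lying in $\aqt s$ (the ``non-$s$ part''); then $Y/Y'$ has all composition factors among $\Sigma_s\la m\ra$ with $m \le 0$, and since $Y$ has finite length there are only finitely many such $m$, say $m \ge -k_0$. Taking $k \ge k_0$, the quotient $\bar Y = Y/Y'$ lies in $\axg{\Xi'}$ where $\Xi' = \{(s,m) \mid 0 \le -m \le k\}$, a subset on which $\tde sk$ is still projective (it is contained in the $\Xi$ of Proposition~\ref{prop:qstd}\eqref{it:qstd-proj} once we also discard $(s,-k)$ appropriately — I will need to be a little careful here and possibly enlarge $k$ by one). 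The map $\de s \thr \Sigma_s \hookrightarrow \operatorname{top}(\bar Y)$ together with projectivity of $\tde sk$ lifts to $\tilde g\colon \tde sk \to \bar Y$, which is surjective since its image has the same simple top; composing $\de s \hto Y \thr \bar Y$ we get that $\de s$ maps to $\bar Y$, and I claim this too lifts through $\tilde g$.

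The key step, and the main obstacle, is to produce the actual injection $g'\colon \de s \hto \tde sk$ (or into a further $s$-quasistandard modification) with $g'\circ f = 0$, rather than just a map to $\bar Y$. The mechanism is: since $\tde sk$ is $s$-quasistandard and $X$ is $s$-quasistandard with $d > 0$, one expects $\Hom(\tde sk, \de s\la m\ra[d])$-type obstructions to vanish — indeed, by Lemma~\ref{lem:dual-exc}\eqref{it:lemq-mixed} applied to the standard filtrations (whose subquotients are $\de s\la m\ra$), the relevant positive $\Ext$'s between $s$-quasistandard objects with nonnegative twist gap vanish, which is precisely what controls whether the composite map factors and whether $f$ can be killed. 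So the plan is: use the long exact sequence coming from the standard filtration of $\tde sk$, together with the vanishing $\Hom^{>0}(\de s, \de s\la m\ra) = 0$ for $m \ge 0$ from Lemma~\ref{lem:dual-exc}\eqref{it:lemq-mixed}, to show $\tde sk$ receives an injection from $\de s$ whose cokernel is filtered by $\de s\la m\ra$ with $m > 0$ (this is the content of the filtration in Proposition~\ref{prop:qstd}\eqref{it:qstd-nosub}, reading the bottom-up filtration $0 \to (\text{positive twists}) \to \tde sk \to \tde s{k-1} \to 0$ iterated down to $\de s = \tde s0$ — so $\de s \hto \tde sk$ with cokernel built from $\de s\la m\ra$, $1 \le m \le k$, automatically), and then to show $g' \circ f = 0$ by comparing with the factorization through $Y$ guaranteed by the axiom: $g \circ f = 0$ in $Y$, and $\tde sk$ surjects onto (a quotient of) $Y$ compatibly, so $f$ composed into $\tde sk$ lands in the kernel of a surjection to something where it is already zero; since $X$ is $s$-quasistandard and that kernel lies in $\aqt s$, the $\Hom$ from $X[-d]$ into it vanishes by the filtration argument, forcing $g' \circ f = 0$. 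The delicate bookkeeping is matching up the twists so that the kernel really sits in $\aqt s$ and the needed $\Ext^1$ or $\Hom^d$ groups vanish; that reconciliation of the axiom's $Y \in \aq s$ with the explicit $s$-quasistandard $\tde sk$ is where the argument will require the most care.
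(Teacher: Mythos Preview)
There are genuine gaps in the approach. First, the claim that $Y \in \aq s$ forces every composition factor $\Sigma_s\la m\ra$ to have $m \le 0$ is false: by definition $\aq s$ is the Serre subcategory generated by all $\Sigma_t\la n\ra$ with $t \le s$ and $n \in \Z$, so it imposes no restriction on the Tate twist. Second, and more seriously, the object $\tde sk$ cannot play the role of the target $Y$. The short exact sequence in the proof of Proposition~\ref{prop:qstd} is
\[
0 \to \Ext^1(\tde s{k-1},\de s\la -k\ra)^* \otimes \de s\la -k\ra \to \tde sk \to \tde s{k-1} \to 0,
\]
so $\de s = \tde s0$ is a \emph{quotient} of $\tde sk$, not a subobject, and the standard subquotients of $\tde sk$ all have \emph{nonpositive} twists $-k \le m \le 0$. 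The lemma, however, demands an embedding $\de s \hto Y$ whose cokernel has only strictly \emph{positive} twists $\de s\la m\ra$, $m > 0$. The $\tde sk$ are built in exactly the opposite direction, so the proposed factorization through them cannot produce the required object.

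The paper's argument is quite different and does not use the $\tde sk$ at all. Starting from the axiomatic $Y \in \aq s$, one writes $Y$ as an iterated $*$-product $\de s * \Sigma_{t_1}\la p_1\ra * \cdots$, replaces each $\Sigma_s\la p\ra$ by $\de s\la p\ra * R_s[1]\la p\ra$, and then uses the vanishing $\Hom(\de s\la m\ra, I[1]) = 0$ for $I \in \dqt s$ (Definition~\ref{defn:dual-exc}\eqref{it:defdq-van}) to commute all factors in $\dqt s$ to the right. This yields a distinguished triangle $Y' \to Y \to I$ with $I \in \dqt s$ and $Y'$ an iterated $*$-product of $\de s\la m\ra$'s; since $X$ is $s$-quasistandard, $\Hom(X[-d],I) = \Hom(X[-d],I[1]) = 0$, so the map $g$ and the relation $g \circ f = 0$ descend to $Y'$. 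A second rearrangement, now using $\Hom(\de s\la m\ra, \de s\la n\ra[1]) = 0$ for $n > m$ (Lemma~\ref{lem:dual-exc}\eqref{it:lemq-mixed}), sorts the $\de s\la m\ra$ factors so that those with $m \le 0$ appear first; quotienting by them gives the final $s$-quasistandard $Y''$ with cokernel filtered by $\de s\la m\ra$, $m > 0$. The essential mechanism is this $*$-rearrangement trick, not projectivity of any particular object.
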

\begin{proof}
Let $g: \de s \hto Y$ be an embedding as in Definition~\ref{defn:efface}.  We must show how to replace this $Y$ by a certain kind of $s$-quasistandard object.  For now, we know only that $Y \in \aq s$.  This means that $Y/g(\de s)$ has a filtration with simple subquotients lying in $\aq s$.  We may write:
\begin{equation}\label{eqn:ystar1}
Y \in \de s * \Sigma_{t_1}\la p_1\ra * \Sigma_{t_2}\la p_2\ra * \cdots * \Sigma_{t_k}\la p_k\ra,
\end{equation}
with $t_i \le s$ for all $i$.  From the distinguished triangle $\de s \to \Sigma_s \to R_s[1] \to$, we have
\[
\Sigma_s\la m\ra \in \de s\la m\ra * R_s[1]\la m\ra.
\]
For each factor $\Sigma_{t_i}\la p_i\ra$ in~\eqref{eqn:ystar1} with $t_i = s$, let us replace it by $\de s\la p_i\ra * R_s[1]\la p_i\ra$.  We will then have
\begin{multline}\label{eqn:ystar2}
Y \in \de s * I_1 * \cdots * I_l \\
\text{where each $I_i$ is one of: } 
\begin{cases}
\de s\la m\ra & \text{for some $m \in \Z$,} \\
R_s[1]\la m\ra & \text{for some $m \in \Z$, or} \\
\Sigma_t\la m\ra & \text{for some $t < s$ and some $m \in \Z$.}
\end{cases}
\end{multline}
Note that each factor $I_i$ that is not of the form $\de s\la m\ra$ belongs to $\dqt s$.  Now, for $I \in \dqt s$, $\Hom(\de s\la m\ra,I[1]) = 0$ by Definition~\ref{defn:dual-exc}\eqref{it:defdq-van}, so any distinguished triangle $I \to J \to \de s\la m\ra \to$ splits.  In other words, $I * \de s\la m\ra$ contains only the isomorphism class of the direct sum $I \oplus \de s\la m\ra$, and in particular, we have
\[
I * \de s\la m\ra \subset \de s\la m\ra * I 
\qquad\text{if $I \in \dqt s$.}
\]
Using this fact, we can rearrange the expression~\eqref{eqn:ystar2} so that all factors of the form $\de s\la m\ra$ occur to the left of all factors in $\dqt s$.  In other words, we may assume without loss of generality that~\eqref{eqn:ystar1} reads as follows:
\[
Y \in \de s * \de s\la m_1\ra * \cdots * \de s\la m_k\ra * I_{k+1} * \cdots * I_l
\qquad\text{with $I_i \in \dqt s$ for $i \ge k+1$.}
\]
This means that there is a distinguished triangle
\begin{equation}\label{eqn:ystardt}
Y' \to Y \to I \to
\end{equation}
with
\begin{equation}\label{eqn:ystar3}
Y' \in \de s * \de s\la m_1\ra * \cdots * \de s\la m_k\ra
\qquad\text{and}\qquad
I \in I_{k+1} * \cdots * I_l \subset \dqt s.
\end{equation}

Recall that $\Hom(\de s[k]\la m\ra, I) = \Hom(\de s[k]\la m\ra, I[1]) = 0$ for any $k,m \in \Z$, by Definition~\ref{defn:dual-exc}\eqref{it:defdq-van}.  It follows that $\Hom(X[-d], I) = \Hom(X[-d]), I[1]) = 0$.  From the long exact sequence obtained by applying $\Hom(\de s,{-})$ and $\Hom(X[-d],{-})$ to~\eqref{eqn:ystardt}, we obtain natural isomorphisms
\[
\Hom(\de s, Y') \overset{\sim}{\to} \Hom(\de s, Y)
\qquad\text{and}\qquad
\Hom(X[-d], Y') \overset{\sim}{\to} \Hom(X[-d], Y).
\]
The first of these shows that $g: \de s \hto Y$ factors in a unique way through $Y'$.  Let $g': \de s \to Y'$ be the induced map.  Now, the fact that $g \circ f = 0$ means that $g' \circ f$ is in the kernel of $\Hom(X[-d], Y') \to \Hom(X[-d], Y)$, so the second isomorphism above shows that $g' \circ f = 0$.  However, $g'$ does not necessarily have the other properties claimed in the proposition.

To repair this, we use the ``rearrangement'' method again.  By Lemma~\ref{lem:dual-exc}\eqref{it:lemq-mixed}, we have $\Hom(\de s\la m\ra, \de s[1]\la n\ra) = 0$ if $n > m$, so it follows that
\[
\de s\la n\ra * \de s\la m\ra \subset \de s\la m\ra * \de s\la n\ra \qquad\text{if $n > m$.}
\]
Using this to rearrange terms in~\eqref{eqn:ystar3}, we may first assume without loss of generality that $m_1 \le m_2 \le \cdots \le m_k$, and then we may write
\[
Y' \in \de s\la m_1\ra * \cdots * \de s\la m_j\ra * \de s * \de s\la m_{j+1}\ra * \cdots * \de s\la m_k\ra
\]
where $m_1 \le \cdots \le m_j \le 0 < m_{j+1} \le \cdots \le m_k$.  In other words, we have a distinguished triangle
\[
J \to Y' \to Y'' \to
\]
with $J \in \de s\la m_1\ra * \cdots * \de s\la m_j\ra$ and $Y'' \in \de s * \de s\la m_{j+1}\ra * \cdots * \de s\la m_k\ra$.  Let $g'': \de s \to Y''$ be the natural map.  Clearly, $g'' \circ f = 0$.  It remains only to check that $g''$ is injective.  Recall that a map in $\fA$ is injective if and only if its cone in $\fD$ actually lies in the abelian category $\fA$ (and in that case, the cone is the cokernel).  By construction, the cone of $g''$ lies in $\de s\la m_{j+1}\ra * \cdots * \de s\la m_k\ra \subset \fA$, as desired.
\end{proof}

\begin{prop}\label{prop:tde-vanish}
Let $k \ge 0$ and $d \ge 1$.  If $n < k+d$, then
\[
\Hom^d(\tde sk\la n\ra, \de s) = \Hom^d(\na s, \tna sk\la-n\ra) = 0.
\]
\end{prop}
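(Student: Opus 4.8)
The plan is to induct on $k$, using the standard-filtration structure of $\tde sk$ established in Proposition~\ref{prop:qstd}\eqref{it:qstd-nosub} together with the short exact sequence
\[
0 \to \Ext^1(\tde s{k-1}, \de s\la -k\ra)^* \otimes \de s\la -k\ra \to \tde sk \to \tde s{k-1} \to 0
\]
produced in the proof of that proposition. For the base case $k=0$ we have $\tde s0 = \de s$, so we must show $\Hom^d(\de s\la n\ra, \de s) = 0$ for $d \ge 1$ and $n < d$; equivalently $\Hom^d(\de s, \de s\la -n\ra) = 0$ with $-n > -d$, i.e. $-n \ge -d+1 \ge 0$ when $d \ge 1$ (and more generally whenever $-n \ge 0$). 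This is exactly Lemma~\ref{lem:dual-exc}\eqref{it:lemq-mixed}, which gives $\Hom^i(\de s, \de s\la m\ra) = 0$ for $i > 0$, $m \ge 0$. (One should double-check the inequality bookkeeping: $n < 0 + d$ forces $-n > -d$, hence $-n \ge -d+1$, and since $d \ge 1$ this is $\ge 0$.)

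For the inductive step, apply $\Hom^\bullet({-}\la n\ra, \de s)$ to the displayed short exact sequence. This yields a long exact sequence in which $\Hom^d(\tde sk\la n\ra, \de s)$ is squeezed between $\Hom^d(\tde s{k-1}\la n\ra, \de s)$ and a sum of copies of $\Hom^d(\de s\la n-k\ra, \de s)$. By the inductive hypothesis the first of these vanishes provided $n < (k-1) + d$, which holds since we are assuming $n < k+d$ — wait, this is \emph{weaker}, so one actually needs $n < k+d \Rightarrow n < (k-1)+d$, which is false. I would instead route this more carefully: the relevant term from $\tde s{k-1}$ appears with a shift that keeps the hypothesis applicable only when $n \le k-1+d-1$, so the edge case $n = k+d-1$ must be handled separately. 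For that edge case I would use the costandard/standard vanishing $\Hom^d(\de s\la n-k\ra,\de s) = \Hom^d(\de s,\de s\la k-n\ra)$, which vanishes by Lemma~\ref{lem:dual-exc}\eqref{it:lemq-mixed} exactly when $k - n \ge 0$, i.e. $n \le k$ — and since $d \ge 1$, $n = k+d-1 \ge k$, so this only saves us when $d = 1$. The genuinely delicate point is therefore the interplay of the two shift contributions, and I expect the clean statement to emerge from observing that in the long exact sequence, $\Hom^d(\tde sk\la n\ra,\de s)$ is sandwiched between $\Hom^d(\tde s{k-1}\la n\ra,\de s)$ (needs $n < (k-1)+d$, true when $n \le k+d-2$) and $\Ext^1(\tde s{k-1},\de s\la -k\ra)^* \otimes \Hom^d(\de s\la n-k\ra,\de s)$ — no, the summand sits on the \emph{other} side. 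Let me reorganize: the three-term exact sequence near degree $d$ reads
\[
\cdots \to \Hom^d(\tde s{k-1}\la n\ra, \de s) \to \Hom^d(\tde sk\la n\ra, \de s) \to \bigl(\Ext^1(\tde s{k-1},\de s\la -k\ra)^*\bigr) \otimes \Hom^d(\de s\la n-k\ra, \de s) \to \cdots,
\]
so $\Hom^d(\tde sk\la n\ra,\de s)$ vanishes as soon as both outer terms do. The left term vanishes by induction when $n < (k-1)+d$, i.e. $n \le k+d-2$; the right term, being $\Hom^d(\de s, \de s\la k-n\ra)$, vanishes by Lemma~\ref{lem:dual-exc}\eqref{it:lemq-mixed} when $k-n \ge 0$. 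The union of $\{n \le k+d-2\}$ and $\{n \le k\}$ covers $\{n \le k+d-2\} \cup \{n \le k\} = \{n \le \max(k+d-2, k)\} = \{n \le k+d-2\}$ for $d \ge 2$, which is short of the target $n \le k+d-1$ by exactly one. So the true edge case is $n = k+d-1$.

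To close that gap I would use the \emph{effaceability property} (Lemma~\ref{lem:efface}) — this is the whole point of introducing it. Dualize: a nonzero class in $\Hom^d(\tde sk\la n\ra, \de s) = \Hom(\tde sk\la n\ra [-d], \de s)$ is a map $f: X[-d] \to \de s$ with $X = \tde sk\la n\ra$ $s$-quasistandard. By Lemma~\ref{lem:efface} there is an $s$-quasistandard $Y$ and an injection $g: \de s \hto Y$ with $g \circ f = 0$, where every standard subquotient of $Y/g(\de s)$ is some $\de s\la m\ra$ with $m > 0$; thus $Y/g(\de s) \cong \tde s{k'}\la m'\ra$-type object lying in $\dqt{}$... more precisely $Y/g(\de s)$ has a standard filtration by $\de s\la m\ra$'s with $m>0$, so it fits $\Hom^d(X[-d]-) $-vanishing if we already know the result for the relevant shifts. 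Concretely, from $0 \to \de s \to Y \to Y/g(\de s) \to 0$ and the vanishing of $\Hom^{d}(X[-d], Y)$ forced by $g\circ f = 0$ being the image of $f$ — combined with the long exact sequence — $f$ factors through $\Hom^{d-1}(X[-d], Y/g(\de s))$, i.e. $\Hom^{d-1}(X, (Y/g(\de s))[1])$. Since $Y/g(\de s)$ is filtered by $\de s\la m\ra$ with $m > 0$ and $X = \tde sk\la n\ra$, each contribution is $\Hom^{d-1}(\tde sk\la n-m\ra, \de s)$-type with $m > 0$, hence the relevant inequality $n - m < k + (d-1)$ reads $n < k + d - 1 + m$, which holds since $n < k+d$ means $n \le k+d-1 < k+d-1+m$. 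So by a \emph{nested} induction — on $d$ for fixed conclusion, invoking effaceability to trade degree $d$ for degree $d-1$ at the cost of a strictly positive Tate shift — the edge case closes. I expect \textbf{this nested induction via effaceability to be the main obstacle}: getting the bookkeeping of the two filtrations (the one on $\tde sk$ and the one on $Y/g(\de s)$) to interlock so that every term that appears is covered either by the outer induction on $k$ or by the inner induction on $d$, and verifying that the effaceability replacement genuinely produces subquotients with \emph{strictly} positive shift (which is what makes the inequality $n \le k+d-1 < k+d-1+m$ strict and hence usable). The costandard assertion $\Hom^d(\na s, \tna sk\la -n\ra) = 0$ follows by the symmetric argument using part~\eqref{it:defeff-costd} of Definition~\ref{defn:efface} and the costandard halves of Lemma~\ref{lem:dual-exc} and Proposition~\ref{prop:qstd}.
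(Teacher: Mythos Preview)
Your effaceability argument in the final paragraph is correct and is exactly the paper's inductive step. The problem is everything surrounding it.

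\textbf{The base case $k=0$ is wrong.} You claim that $-n \ge -d+1$ together with $d \ge 1$ gives $-n \ge 0$. But $-d+1 \ge 0$ holds only for $d = 1$; for $d \ge 2$ you only get $-n \ge -d+1$, which may be negative. Concretely, take $k=0$, $d=2$, $n=1$: you need $\Hom^2(\de s, \de s\la -1\ra) = 0$, and Lemma~\ref{lem:dual-exc}\eqref{it:lemq-mixed} says nothing about negative Tate twists. Indeed, $\uHom^i(\de s,\de s)$ for $i>0$ can be nonzero in negative degrees; this is precisely why effaceability is needed at all. So the outer induction on $k$ never gets off the ground.

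\textbf{The paper inducts on $d$, not $k$.} The base case is $d=1$, for all $k$ at once, and it is handled by Proposition~\ref{prop:qstd}\eqref{it:qstd-proj}: every composition factor of $\de s\la -n\ra$ is some $\Sigma_t\la m\ra$ with $(t,m) \preceq_\Delta (s,-n) \preceq_\Delta (s,-k)$ (using $n \le k$), and $\tde sk$ has no $\Ext^1$ into such simples; then Lemma~\ref{lem:ext-hom} identifies $\Ext^1$ with $\Hom^1$. You never invoke this projectivity property, and without it your nested induction on $d$ has no base.

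Once the $d=1$ case is in place, your effaceability paragraph is the entire inductive step, and it works for every $n < k+d$, not just the edge case $n = k+d-1$. The long exact sequence coming from the filtration of $\tde sk$ is unnecessary: drop the induction on $k$ altogether.
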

\begin{proof}
We proceed by induction on $d$.  Suppose first that $d = 1$.  Note that every composition factor of $\de s\la -n\ra$ is some $\Sigma_t\la m\ra$ with $(t,m) \preceq_\Delta (s, -n)$.  The assumption that $n < k+1$ means that $(s,-n) \preceq_\Delta (s,-k)$, so in view of Lemma~\ref{lem:ext-hom}, the result follows from Propostion~\ref{prop:qstd}\eqref{it:qstd-proj}.

Now, suppose $d > 1$.  Given $f \in \Hom^d(\tde sk\la n\ra, \de s)$, choose an embedding $g: \de s \to Y$ as in Lemma~\ref{lem:efface}, and let $Z = Y/g(\de s)$.  Consider the exact sequence
\[
\Hom^{d-1}(\tde sk\la n\ra, Z) \to \Hom^d(\tde sk\la n\ra, \de s) \to \Hom^d(\tde sk\la n\ra, Y).
\]
For any $m > 0$, we have $n-m < k+d-1$, so it follows from the inductive assumption that
\[
\Hom^{d-1}(\tde sk\la n\ra, \de s\la m\ra) \simeq \Hom^{d-1}(\tde sk\la n-m\ra, \de s) = 0 \qquad\text{if $m > 0$.}
\]
Recall from Lemma~\ref{lem:efface} that the standard subquotients of $Z$ are all various $\de s\la m\ra$ with $m > 0$.  It follows that $\Hom^{d-1}(\tde sk\la n\ra, Z) = 0$, so the map
\[
\Hom^d(\tde sk\la n\ra, \de s) \to \Hom^d(\tde sk\la n\ra, Y)
\]
is injective.  The morphism $f$ is in the kernel of this map (because $g \circ f = 0$), so $f = 0$.  Thus, $\Hom^d(\tde sk\la n\ra, \de s) = 0$, as desired.
\end{proof}

\begin{prop}\label{prop:tde-l-vanish}
If $t < s$, or else if $t = s$ and $n-m < k+d$, then
\[
\Hom^d(\tde sk\la n\ra, \Sigma_t\la m\ra) = \Hom^d(\Sigma_t\la n\ra, \tna sk\la m\ra) = 0.
\]
\end{prop}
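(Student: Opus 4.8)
The plan is to follow the pattern of Proposition~\ref{prop:tde-vanish}: apply the Tate twist $\la -m\ra$ to reduce to the case $m=0$, take $d\ge1$ as in that proposition, and treat the costandard assertion about $\tna sk$ as the formal dual of the standard assertion about $\tde sk$, so that only the latter need be written out.

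First I would dispose of the case $t<s$, in the stronger form that $\Hom^d(\tde sk\la n\ra,\Sigma_t\la m\ra)=0$ for \emph{all} $d$, $n$, $m$ as soon as $t<s$. Since $\tde sk$ carries a standard filtration whose subquotients are various $\de s\la p\ra$ (Proposition~\ref{prop:qstd}\eqref{it:qstd-nosub}), it suffices to prove that $\uHom^i(\de s,\Sigma_t\la r\ra)=0$ for all $i$ whenever $t<s$. The crux is that $\Sigma_t\la r\ra\in\dqt s$: one checks by induction on the order---which is well-founded by~\eqref{eqn:lowfin}---that $\aqt s\subseteq\dqt s$, the inductive step being that in the distinguished triangle $\Sigma_t\to\na t\to Q_t\to$ associated to $0\to\Sigma_t\to\na t\to Q_t\to0$ (with $t<s$) one has $\na t\in\dqt s$ by definition and $Q_t\in\aqt t\subseteq\dqt t\subseteq\dqt s$, whence $\Sigma_t\in\dqt s$ as $\dqt s$ is triangulated. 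It then remains to recall that $\uHom^i(\de s,-)$ annihilates every object of $\dqt s$: the objects $X$ with $\uHom^i(\de s,X)=0$ for all $i$ form a triangulated subcategory closed under summands, and by Definition~\ref{defn:dual-exc}\eqref{it:defdq-van} it contains all the generators $\na u\la n\ra$ ($u<s$) of $\dqt s$.

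For the remaining case $t=s$ I would feed the short exact sequence $0\to R_s\to\de s\to\Sigma_s\to0$---equivalently, the distinguished triangle $\de s\to\Sigma_s\to R_s[1]\to$---into $\Hom^d(\tde sk\la n\ra,-)$. This places $\Hom^d(\tde sk\la n\ra,\Sigma_s)$ between $\Hom^d(\tde sk\la n\ra,\de s)$ and $\Hom^{d+1}(\tde sk\la n\ra,R_s)$. The former vanishes by Proposition~\ref{prop:tde-vanish} (this is where the hypothesis $n<k+d$ enters), and the latter vanishes because $R_s\in\aqt s$ is filtered by objects $\Sigma_t\la p\ra$ with $t<s$, to which the previous step applies. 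Hence $\Hom^d(\tde sk\la n\ra,\Sigma_s)=0$. The costandard half is the mirror image, now using the triangle $\Sigma_s\to\na s\to Q_s\to$, the $\tna sk$ half of Proposition~\ref{prop:tde-vanish}, and Definition~\ref{defn:qexc}\eqref{it:defq-ord} in place of Definition~\ref{defn:dual-exc}\eqref{it:defdq-van}.

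The only substantive ingredient is Proposition~\ref{prop:tde-vanish}; everything else is long-exact-sequence bookkeeping. The single point that repays a little care is the inductive inclusion $\aqt s\subseteq\dqt s$ used to place $\Sigma_t\la r\ra$ and $R_s$ inside $\dqt s$: one must check that this induction is well-founded, which is exactly what~\eqref{eqn:lowfin} guarantees.
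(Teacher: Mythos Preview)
Your argument is correct and follows the same route as the paper: the $t<s$ case is handled via the $s$-quasistandard filtration of $\tde sk$ together with the vanishing of $\uHom^i(\de s,-)$ on $\dqt s$, and the $t=s$ case via the long exact sequence coming from $0\to R_s\to\de s\to\Sigma_s\to0$ combined with Proposition~\ref{prop:tde-vanish}. The only difference is cosmetic: the paper compresses the $t<s$ case into one sentence (invoking that $\tde sk$ is $s$-quasistandard and that $R_s\in\aqt s$), whereas you spell out the inductive proof of $\aqt s\subseteq\dqt s$ explicitly.
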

\begin{proof}
If $t < s$, this follows from Definition~\ref{defn:dual-exc}\eqref{it:defdq-van} and the fact that $\tde sk\la n\ra$ is $s$-quasistandard.  If $t = s$, consider the exact sequence
\[
\Hom^d(\tde sk\la n\ra, \de s\la m\ra) \to \Hom^d(\tde sk\la n\ra, \Sigma_s\la m\ra) \to \Hom^{d+1}(\tde sk\la n\ra, R_s\la m\ra).
\]
The first term vanishes by Proposition~\ref{prop:tde-vanish}, and the last again because $\tde sk\la n\ra$ is $s$-quasistandard and $R_s\la m\ra \in \aqt s$.  Therefore, $\Hom^d(\tde sk\la n\ra, \Sigma_s\la m\ra) = 0$.
\end{proof}

\begin{cor}\label{cor:tde-na-vanish}
If $t \ne s$, or else if $t = s$ and $n-m < k+d$, then we have $\Hom^d(\tde sk\la n\ra, \na t\la m\ra) = \Hom^d(\de s\la n\ra, \tna sk\la m\ra) = 0$. \qed
\end{cor}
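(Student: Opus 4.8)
The plan is to read the corollary off Proposition~\ref{prop:tde-l-vanish} by replacing the costandard object $\na t\la m\ra$ with its composition factors whenever that is sharp enough, and otherwise by routing the computation through $\na s$ itself via the triangle defining $\de s$. As everywhere in this section, it suffices to treat $\Hom^d(\tde sk\la n\ra, \na t\la m\ra)$; the costandard vanishing is obtained by the symmetric argument, interchanging standard and costandard objects and the orders $\preceq_\Delta$ and $\preceq_\nabla$ throughout.

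First I would settle the case $t = s$ with $n - m < k + d$. Since $\cok \psi^s = Q_s$ lies in $\aqt s$, the composition factors of $\na s\la m\ra$ are $\Sigma_s\la m\ra$, occurring once, together with factors $\Sigma_{t'}\la m'\ra$ with $t' < s$. Proposition~\ref{prop:tde-l-vanish} gives $\Hom^d(\tde sk\la n\ra, \Sigma_s\la m\ra) = 0$ (using $n - m < k + d$) and $\Hom^d(\tde sk\la n\ra, \Sigma_{t'}\la m'\ra) = 0$ for each remaining factor (using $t' < s$). Inducting up a composition series of $\na s\la m\ra$ through the associated long exact sequences then forces $\Hom^d(\tde sk\la n\ra, \na s\la m\ra) = 0$.

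For $t \ne s$ (no restriction on $n$, $m$) I would instead prove the stronger statement that $\Hom^i(\de s, \na t\la p\ra) = 0$ for all $i, p \in \Z$, and then propagate it up the standard filtration of $\tde sk\la n\ra$, whose subquotients are various $\de s\la m'\ra$ by Proposition~\ref{prop:qstd}\eqref{it:qstd-nosub}, again via long exact sequences. To prove the stronger statement, split into $t < s$ and $t \not\le s$. If $t < s$, then $\na t\la p\ra \in \dqt s$, and since Definition~\ref{defn:dual-exc}\eqref{it:defdq-van} makes $\uHom^i(\de s,-)$ vanish on the generators of $\dqt s$, it vanishes on all of $\dqt s$. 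If $t \not\le s$, use the distinguished triangle $\de s \to \na s \to C \to$ with $C \in \dqt s$ from Definition~\ref{defn:dual-exc}\eqref{it:defdq-cone}: here $\Hom^i(\na s, \na t\la p\ra) = 0$ by Definition~\ref{defn:qexc}\eqref{it:defq-ord}, since $s \not\ge t$, while $\Hom^i(C, \na t\la p\ra) = 0$ because $C$ is built from objects $\na{t'}\la q\ra$ with $t' < s$, each of which has $t' \not\ge t$ (otherwise $t \le t' < s$ contradicts $t \not\le s$), so Definition~\ref{defn:qexc}\eqref{it:defq-ord} again applies; the long exact sequence attached to the triangle then yields $\Hom^i(\de s, \na t\la p\ra) = 0$.

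The only point requiring a moment's thought — the nearest thing to an obstacle — is that for $t > s$, or for $t$ incomparable to $s$, the costandard object $\na t$ may genuinely contain $\Sigma_s$ among its composition factors, so the composition-series dévissage used in the $t = s$ case is not available for the remaining $\Sigma_s$-twist; passing through $\na s$ via the defining triangle of $\de s$ is exactly what circumvents this. Everything else is routine bookkeeping with long exact sequences.
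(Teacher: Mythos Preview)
Your proof is correct. The paper gives no argument beyond the \qed\ mark, so your write-up is a faithful expansion of what the author leaves implicit; in particular, your observation that the $t\not\le s$ case cannot be handled by d\'evissage through Proposition~\ref{prop:tde-l-vanish} alone, and must instead go through the defining triangle $\de s \to \na s \to C \to$ together with Definition~\ref{defn:qexc}\eqref{it:defq-ord}, is exactly the point that needs to be supplied.
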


\subsection{Subcategories associated to convex sets}
\label{subsect:convex}

The next step towards our theorem is to show that certain Serre subcategories of $\fA$ with finitely many simple objects have enough projectives, and that these projectives have desirable $\Hom$-vanishing properties in $\fD$.

\begin{defn}\label{defn:convex}
A subset $\Xi \subset S \times \Z$ is said to be \emph{convex} if the following two conditions hold:
\begin{enumerate}
\item For each $(s,n) \in \Xi$, we have $\de s\la n\ra \in \ax$ and $\na s\la n\ra \in \ax$.
\item For any $s \in S$, the set of integers $\{ n \in \Z \mid (s,n) \in \Xi\}$ is either empty or an interval $\{a_0, a_0 + 1, \ldots, a_0 + k\}$.
\end{enumerate}
\end{defn} 

\begin{defn}\label{defn:xi-std}
Let $\Xi \subset S \times \Z$ be a finite convex set.  Let $(s,n) \in \Xi$, and let $a_s = \min \{ m \in \Z \mid (s,m) \in \Xi \}$.  Let
\[
\dex s = \tde s{n-a_s}.
\]
The object $\dex s\la n\ra$ is said to be \emph{$\Xi$-standard}, or the \emph{$\Xi$-standard cover} of $\Sigma_s\la n\ra$.

A \emph{$\Xi$-standard filtration} of an object $X \in \ax$ is a filtration each of whose subquotients is a $\Xi$-standard object.
\end{defn}

\begin{lem}\label{lem:exist-convex}
Every finite subset of $S \times \Z$ is contained in a finite convex set.
\end{lem}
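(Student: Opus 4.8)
The plan is to start with an arbitrary finite subset $\Phi \subset S \times \Z$ and enlarge it in two stages: first to guarantee condition~(1) of Definition~\ref{defn:convex}, then to fill in gaps so that condition~(2) holds, while checking that the second stage does not destroy the first. The key point making this possible is that $S$ satisfies~\eqref{eqn:lowfin}, so the ``principal lower sets'' $\{t \in S \mid t \le s\}$ are finite, and only finitely many $s \in S$ are involved.

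First I would address condition~(1). For a given $(s,n)$, whether $\de s\la n\ra$ and $\na s\la n\ra$ lie in $\ax$ depends on which simple objects $\Sigma_t\la m\ra$ occur as composition factors. Here one invokes the structure of the quasi-hereditary category $\fA$ from Theorem~\ref{thm:qexc-t}: the composition factors of $\de s$ are of the form $\Sigma_t\la m\ra$ with $t \le s$ (this follows from $\ker\phi_s \in \aqt s$ together with induction, or directly from the standard-filtration analysis of Section~\ref{subsect:qstd}), and similarly for $\na s$. Crucially, for each such $s$ there are only \emph{finitely many} pairs $(t,m)$ that can occur, because $t$ ranges over the finite set $\{t \le s\}$ and, for each $t$, the integer $m$ ranges over a finite set (the multiplicity $[\de s : \Sigma_t\la m\ra]$ is nonzero for only finitely many $m$, since $\de s$ has finite length). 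Let $\Phi'$ be $\Phi$ together with all these pairs, over all $(s,n) \in \Phi$; this is still finite, and now every $(s,n) \in \Phi'$ satisfies $\de s\la n\ra, \na s\la n\ra \in \axg{\Phi'}$ — but one must be slightly careful, since enlarging $\Phi$ could in principle require adding still more composition factors. However, the new pairs $(t,m)$ all have $t \le s$ for some original $s$, and iterating the closure operation stays within the finite set $\{(t,m) \mid t \le s \text{ for some } (s,n)\in\Phi,\ m \in \text{(finite range)}\}$, so the process terminates.

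Next I would handle condition~(2). Let $S_0 = \{ s \in S \mid (s,n) \in \Phi' \text{ for some } n\}$, a finite set. For each $s \in S_0$, the set $\{n \mid (s,n)\in\Phi'\}$ is a nonempty finite set of integers; replace it by the interval $\{a_0, a_0+1, \ldots, a_0+k\}$ spanning it, and let $\Xi$ be the resulting set. Then $\Xi$ is finite and satisfies condition~(2) by construction. It remains to recheck condition~(1): the new pairs $(s,n) \in \Xi \smallsetminus \Phi'$ have $n$ lying between values already present, so I would argue that $\de s\la n\ra$ and $\na s\la n\ra$ still lie in $\ax$ — their composition factors $\Sigma_t\la m\ra$ satisfy $t \le s$ and, after the Tate-twist shift, $m$ lies in a range determined by $n$; combined with the fact that $\Phi'$ already contained $(t, m')$ for the relevant $t$ and a spanning range of $m'$, enlarging to an interval in the $s$-coordinate only helps (one can also simply rerun the closure step of the previous paragraph after forming intervals, noting it preserves the interval property since it only adds pairs with $t < s$, strictly below, which do not interact with the $s$-rows). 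The main obstacle is this bookkeeping: ensuring the two closure operations (close under composition factors; fill in intervals) can be interleaved or ordered so that both conditions hold simultaneously, and confirming finiteness survives. This is entirely routine given~\eqref{eqn:lowfin} and the finite-length hypothesis, so I would present it concisely.
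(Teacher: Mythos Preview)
Your plan is correct, and the underlying ideas match the paper's, but the organization differs. The paper introduces a ``failure set'' $F(\Xi)$: roughly the lower closure in $S$ of the elements $s$ at which one of the two convexity conditions fails. It then argues by induction on $|F(\Xi)|$, at each step choosing a maximal $s \in F(\Xi)$, adjoining every $(t,m)$ indexing a composition factor of some $\de s\la r\ra$ or $\na s\la r\ra$ with $a_0 \le r \le b_0$, and observing that all newly created failures occur at elements strictly below $s$, so $|F(\Xi)|$ drops. This top-down induction handles both conditions simultaneously and makes the termination transparent.

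Your two-stage approach (globally close under composition factors, then fill intervals) also works, and the reason the second stage does not spoil the first is exactly the observation you sketched: if $(s,n_1)$ and $(s,n_2)$ lie in the closed-up set $\Phi'$ and $n_1 < n < n_2$, then every composition factor of $\de s\la n\ra$ is some $\Sigma_t\la n+k\ra$, and since $(t,n_1+k),(t,n_2+k)\in\Phi'$ already, the interval-filling on the $t$-row captures $(t,n+k)$. This is a clean argument that the paper does not isolate; it avoids the per-element induction at the cost of the separate termination check for the closure step. For that closure step, your claim that iteration stays inside a finite set is right, but do make explicit why: each pass strictly decreases the $S$-coordinate (apart from the head $\Sigma_s$ of $\de s$), chains below any $s$ are finite by~\eqref{eqn:lowfin}, and each step shifts the $\Z$-coordinate by a bounded amount, so both coordinates are bounded. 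Your parenthetical ``rerun the closure step \ldots\ it preserves the interval property'' is misleading as stated (rerunning closure could break intervals on lower rows); stick with the direct argument above.
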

\begin{proof}
Given a finite set $\Xi \subset S \times \Z$, let $F_0(\Xi) \subset S$ be the set of $s \in S$ such that one of the following conditions holds:
\begin{itemize}
\item There is an $n \in \Z$ with $(s,n) \in \Xi$ but either $\de s\la n\ra \notin \ax$ or $\na s\la n\ra \notin \ax$.
\item There are integers $a<b<c$ such that $(s,a), (s,c) \in \Xi$ but $(s,b) \notin \Xi$.
\end{itemize}
Let $F(\Xi)$ be the lower closure of $F_0(\Xi)$, i.e.,
\[
F(\Xi) = \{s \in S \mid \text{there is some $s_0 \in F_0(\Xi)$ such that $s \le s_0$} \}.
\]
It is obvious that $F_0(\Xi)$ is finite, and then it follows from~\eqref{eqn:lowfin} that $F(\Xi)$ is finite as well.  Of course, $\Xi$ is convex if and only if $F(\Xi)$ is empty.  

We prove the lemma by induction on the size of $F(\Xi)$.  If $F(\Xi) \ne \varnothing$, let $s \in F(\Xi)$ be a maximal element (with respect to $\le$).  Then $s \in F_0(\Xi)$.  Let us put
\[
a_0 = \min \{n \mid (s,n) \in \Xi\}, \qquad
b_0 = \max \{n \mid (s,n) \in \Xi\}.
\]
Finally, let
\[
\Xi' = \Xi \cup \left\{ (t,m) \,\Bigg|\,
\begin{array}{c}
\text{$\Sigma_t\la m\ra$ occurs as a composition factor in} \\
\text{some $\de s\la r\ra$ or $\na s\la r\ra$ with $a_0 \le r \le b_0$}
\end{array}
\right\}.
\]
It is clear that $s \notin F(\Xi')$.  Note that all the new pairs $(t,m) \in \Xi' \smallsetminus \Xi$ have $t \le s$.  It follows that every element $s' \in F_0(\Xi')$ either belongs to $F_0(\Xi)$ or is${}<s$.  Therefore, $F(\Xi') \subset F(\Xi) \smallsetminus \{s\}$, so by induction, $\Xi'$, and therefore $\Xi$, is contained in a finite convex set.
\end{proof}

\begin{lem}\label{lem:xi-std}
If $\Xi \subset S \times \Z$ is a finite convex set, and $(s,n) \in \Xi$, then $\dex s\la n\ra$ belongs to $\ax$ and has $\Sigma_s\la n\ra$ as its unique simple quotient.  Moreover, for all $d \ge 1$, we have
\begin{equation}\label{eqn:xi-std}
\begin{aligned}
\Hom^d(\dex s\la n\ra, \Sigma_t\la m\ra) &= 0
&\qquad&\text{if $(t,m) \in \Xi$ and $t \le s$,} \\
\Hom^d(\dex s\la n\ra, \na t\la m\ra) &= 0
&\qquad&\text{for all $(t,m) \in \Xi$.}
\end{aligned}
\end{equation}
In particular, if $(s,n)$ is a costandard-maximal element of $\Xi$, then $\dex s\la n\ra$ is a projective cover of $\Sigma_s\la n\ra$.
\end{lem}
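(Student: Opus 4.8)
The plan is to reduce everything to the properties of the quasistandard objects $\tde sk$ established in Propositions~\ref{prop:qstd}, \ref{prop:tde-vanish}, \ref{prop:tde-l-vanish} and Corollary~\ref{cor:tde-na-vanish}, translating the bookkeeping about $k$, $d$, $n$ into the condition ``$(s,n) \in \Xi$ and $\Xi$ is convex.'' First I would unwind the definition: by Definition~\ref{defn:xi-std}, $\dex s\la n\ra = \tde s{n-a_s}\la n\ra$, where $a_s = \min\{m \mid (s,m) \in \Xi\}$, so $k = n - a_s \ge 0$. Convexity (part (1)) guarantees $\de s\la r\ra, \na s\la r\ra \in \ax$ for all $a_s \le r \le n$, and part~\eqref{it:qstd-nosub} of Proposition~\ref{prop:qstd} says the standard subquotients of $\tde s{n-a_s}\la n\ra$ are $\de s\la m\ra$ with $a_s \le m \le n$; hence $\dex s\la n\ra$ is built by extensions from objects of $\ax$ and therefore lies in $\ax$. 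That it has $\Sigma_s\la n\ra$ as its unique simple quotient is immediate from Proposition~\ref{prop:qstd}\eqref{it:qstd-quot} after a Tate twist.

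Next I would prove the two vanishing statements in~\eqref{eqn:xi-std}. For the first, take $(t,m) \in \Xi$ with $t \le s$. If $t < s$, Proposition~\ref{prop:tde-l-vanish} gives the vanishing with no further hypothesis. If $t = s$, then since $(s,m) \in \Xi$ we have $m \ge a_s$, i.e.\ $m > a_s - 1 = (n - k) - 1$, which rearranges to $n - m < k + 1 \le k + d$; Proposition~\ref{prop:tde-l-vanish} then applies. For the second statement, take any $(t,m) \in \Xi$. If $t \ne s$, Corollary~\ref{cor:tde-na-vanish} applies directly. If $t = s$, the same inequality $n - m < k + d$ as above holds (using $(s,m) \in \Xi$), so Corollary~\ref{cor:tde-na-vanish} gives $\Hom^d(\tde sk\la n\ra, \na s\la m\ra) = 0$, which is exactly $\Hom^d(\dex s\la n\ra, \na t\la m\ra) = 0$. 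That handles~\eqref{eqn:xi-std} for all $d \ge 1$.

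Finally, for the ``in particular'' clause: suppose $(s,n)$ is costandard-maximal in $\Xi$, so for every $(t,m) \in \Xi$ we have $(t,m) \preceq_\nabla (s,n)$, meaning either $t < s$, or $t = s$ and $m \le n$. I would check projectivity of $\dex s\la n\ra$ in $\ax$ by showing $\Ext^1(\dex s\la n\ra, \Sigma_t\la m\ra) = 0$ for all $(t,m) \in \Xi$. Via Lemma~\ref{lem:ext-hom}, $\Ext^1_{\ax}(\dex s\la n\ra, \Sigma_t\la m\ra)$ injects into $\Hom^1_{\fD}(\dex s\la n\ra, \Sigma_t\la m\ra)$, which vanishes by the $d = 1$ case of the first line of~\eqref{eqn:xi-std} (note $t \le s$ here by costandard-maximality). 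Combined with the fact that $\Sigma_s\la n\ra$ is the unique simple quotient, this shows $\dex s\la n\ra$ is a projective cover of $\Sigma_s\la n\ra$ in $\ax$. The only subtlety to watch is making sure the inequality translation ($m \ge a_s \Leftrightarrow n - m < k+1$) is done carefully and that convexity is genuinely used to keep $\dex s\la n\ra$ inside $\ax$; no step here looks like a serious obstacle, since the real work was already done in Section~\ref{subsect:qstd} and Section~\ref{subsect:efface}.
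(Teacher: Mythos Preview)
Your proof is correct and follows exactly the paper's approach: reduce to Proposition~\ref{prop:qstd}, Proposition~\ref{prop:tde-l-vanish}, and Corollary~\ref{cor:tde-na-vanish} via the observation that $(s,m)\in\Xi$ forces $m\ge a_s$, hence $n-m\le k<k+d$. The paper's own proof is in fact terser than yours---it does not spell out the ``in particular'' clause at all.

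One small slip in your final paragraph: costandard-maximality of $(s,n)$ only yields $t\not>s$ for $(t,m)\in\Xi$, not $t\le s$; since $\le$ is merely a partial order on $S$, there may be $(t,m)\in\Xi$ with $t$ incomparable to $s$, and such pairs are not covered by the first line of~\eqref{eqn:xi-std}. This is harmless, though: for $t\not>s$ and $t\ne s$ the quasi-hereditary axioms (Definition~\ref{defn:qhered}) give $\uHom(\de s,\Sigma_t)=\uExt^1(\de s,\Sigma_t)=0$, and since $\dex s\la n\ra$ is $s$-quasistandard, the required $\Ext^1$-vanishing follows from the filtration.
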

\begin{proof}
Let $a_s$ be as in Definition~\ref{defn:xi-std}.  Because $\Xi$ is convex, the standard objects $\de s\la a_s\ra, \de s\la a_s+1\ra, \ldots, \de s\la n\ra$ all belong to $\ax$, so it follows by Proposition~\ref{prop:qstd} that $\dex s\la n\ra = \tde s{n-a_s}\la n\ra$ does as well.  We also already know that $\Sigma_s\la n\ra$ is the unique simple quotient of $\dex s\la n\ra$.  Finally, note that if $(s,m) \in \Xi$, then $a_s \le m$, so $n - m < (n - a_s) +d$.  Therefore,~\eqref{eqn:xi-std} is an immediate consequence of Proposition~\ref{prop:tde-l-vanish} and Corollary~\ref{cor:tde-na-vanish}.
\end{proof}

\begin{prop}\label{prop:fc-proj}
Let $\Xi \subset S \times \Z$ be a finite convex set.  Every simple object $\Sigma_s\la n\ra \in \ax$ admits a projective cover $P$ with a $\Xi$-standard filtration.  Moreover, we have
\begin{equation}\label{eqn:fc-proj}
\Hom^d(P, X) = 0
\qquad\text{for all $d \ge 1$ and all $X \in \ax$.}
\end{equation}
\end{prop}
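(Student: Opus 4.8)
The plan is to argue by induction on $|\Xi|$, producing the projective cover $P$ either directly or by means of Proposition~\ref{prop:constr-proj}. There is nothing to do when $\Xi=\varnothing$. If the given pair $(s,n)$ is costandard-maximal in $\Xi$, I would take $P=\dex s\la n\ra$: Lemma~\ref{lem:xi-std} tells us this is already a projective cover of $\Sigma_s\la n\ra$ in $\ax$, it is itself a $\Xi$-standard object, and~\eqref{eqn:fc-proj} for it is covered by the last step below. Otherwise, choose any costandard-maximal $(s_0,n_0)\in\Xi$---necessarily distinct from $(s,n)$---and put $\Psi=\Xi\smallsetminus\{(s_0,n_0)\}$. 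Costandard-maximality means $n_0$ is the top of the $s_0$-column of $\Xi$ and no element of $\Xi$ has $S$-component strictly above $s_0$; combining this with the fact that the composition factors of $\de t$ and $\na t$ involve only $\Sigma_t$ and various $\Sigma_u$ with $u<t$, one checks that $\Psi$ is again a finite convex set and that $\axg\Psi$ coincides with the Serre subcategory of $\ax$ consisting of the objects having no subquotient isomorphic to $L:=\Sigma_{s_0}\la n_0\ra$. Since deleting $(s_0,n_0)$ changes none of the integers $a_t=\min\{m:(t,m)\in\Xi\}$, the $\Psi$-standard objects are exactly $\Xi$-standard objects, so a $\Psi$-standard filtration is in particular a $\Xi$-standard one. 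The inductive hypothesis applied to $\Psi$ therefore provides a projective cover $P'$ of $\Sigma_s\la n\ra$ in $\axg\Psi$, with a $\Xi$-standard filtration and with $\Hom^d_\fD(P',Y)=0$ for all $d\ge 1$ and $Y\in\axg\Psi$.

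I would then apply Proposition~\ref{prop:constr-proj} with $(\fA,\fB,L,M,P',L')$ taken to be $(\ax,\ \axg\Psi,\ \Sigma_{s_0}\la n_0\ra,\ \dex{s_0}\la n_0\ra,\ P',\ \Sigma_s\la n\ra)$; note $\axg\Psi$ is indeed the Serre subcategory of $\ax$ that the proposition calls for. Its hypotheses are verified as follows. First, $\End(M)\simeq\Bbbk$ because $M=\dex{s_0}\la n_0\ra$ is $s_0$-quasistandard with unique simple quotient $\Sigma_{s_0}\la n_0\ra$, which moreover occurs in $M$ exactly once (Proposition~\ref{prop:qstd}\eqref{it:qstd-quot} together with the construction of the $\tde {s_0}k$). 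Second, $\ax$ and $\axg\Psi$ are admissible in $\fD$, being Serre subcategories of the admissible heart $\fA$ and hence stable under extensions, direct summands, and the kernels and cokernels of their own morphisms. Third, writing $R=\ker(M\thr L)$: every composition factor of $R$ is a composition factor of $M\in\ax$, hence of the form $\Sigma_v\la q\ra$ with either $v<s_0$, or else $v=s_0$ and $q\ge a_{s_0}$, so Proposition~\ref{prop:tde-l-vanish} gives $\Hom^2_\fD(M,\Sigma_v\la q\ra)=0$ in all cases and $\Hom^2_\fD(M,R)=0$ by dévissage; moreover $R$ lies in $\axg\Psi$, since its composition factors are those of $M$ with the single copy of $L$ removed, so $\Hom^2_\fD(P',R)=0$ by the inductive vanishing for $\Psi$. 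Proposition~\ref{prop:constr-proj} then yields a projective cover $P$ of $\Sigma_s\la n\ra$ in $\ax$ fitting in a short exact sequence $0\to E^*\otimes M\to P\to P'\to 0$ with $E=\Ext^1_{\ax}(P',M)$; refining $0\subseteq E^*\otimes M\subseteq P$---whose bottom term is a direct sum of copies of the $\Xi$-standard object $\dex{s_0}\la n_0\ra$ and whose top term $P'$ carries its $\Xi$-standard filtration---exhibits a $\Xi$-standard filtration of $P$.

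It remains to prove~\eqref{eqn:fc-proj}, i.e.\ $\Hom^d_\fD(P,X)=0$ for all $d\ge 1$ and $X\in\ax$. From the $\Xi$-standard filtration of $P$ and the vanishing $\Hom^d_\fD(\dex t\la m\ra,\na u\la q\ra)=0$ of Lemma~\ref{lem:xi-std}, a dévissage gives $\Hom^d_\fD(P,\na u\la q\ra)=0$ for all $d\ge 1$ and $(u,q)\in\Xi$, hence also $\Hom^d_\fD(P,\na u\la q\ra[-i])=0$ for all $d\ge 1$ and $i\ge 0$. The class of all $Z\in\fD$ with $\Hom^d_\fD(P,Z)=0$ for every $d\ge 1$ is closed under the operation ``$*$'' (by the long exact sequence), so it contains the entire $*$-closure $\mathcal C$ of $\{\na u\la q\ra[-i]:(u,q)\in\Xi,\ i\ge 0\}$; thus it suffices to show $\ax\subseteq\mathcal C$. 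For this, observe that each simple $\Sigma_u\la q\ra$ with $(u,q)\in\Xi$ sits in a distinguished triangle $Q_u\la q\ra[-1]\to\Sigma_u\la q\ra\to\na u\la q\ra\to$, where $Q_u=\cok\psi^u$; here $Q_u\la q\ra$ is a quotient of $\na u\la q\ra\in\ax$, so it lies in $\ax$, and its composition factors are various $\Sigma_v\la p\ra$ with $(v,p)\in\Xi$ and $v<u$. An induction along the partial order $\preceq_\nabla$ restricted to the finite set $\Xi$ now places $Q_u\la q\ra$, and hence $\Sigma_u\la q\ra$, in $\mathcal C$, since $\mathcal C$ is closed under $*$ and under the shift $[-1]$; a further induction on length then places every object of $\ax$ in $\mathcal C$. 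This establishes~\eqref{eqn:fc-proj} and completes the induction.

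The one genuinely delicate step is the verification of the two $\Hom^2$-conditions required by Proposition~\ref{prop:constr-proj}: since $R$ is not a single standard object but an extension carrying many composition factors---coming both from $\ker\phi_{s_0}$ and from the lower twists $\de{s_0}\la m\ra$, $m<n_0$, occurring in $M$---one must keep track of the $\preceq_\Delta$-position of each of them in order to invoke Proposition~\ref{prop:tde-l-vanish} for the first condition and the inductive hypothesis for $\Psi$ for the second. Everything else---that $\Psi$ stays convex, that $\Psi$-standard filtrations are $\Xi$-standard filtrations, and the various dévissages---is routine.
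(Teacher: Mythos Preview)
Your overall architecture is exactly the paper's: induct on $|\Xi|$, remove a costandard-maximal element $(s_0,n_0)$, and build the projective cover via Proposition~\ref{prop:constr-proj}. The verification of the hypotheses of that proposition is fine, and the observation that $\Psi$-standard filtrations are $\Xi$-standard filtrations is the same key point the paper isolates.

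There is, however, a genuine gap in your proof of~\eqref{eqn:fc-proj}. You assert that from $\Hom^d_\fD(P,\na u\la q\ra)=0$ for $d\ge 1$ one gets $\Hom^d_\fD(P,\na u\la q\ra[-i])=0$ for all $d\ge 1$ and $i\ge 0$, and hence that the class $\{Z:\Hom^d(P,Z)=0\text{ for all }d\ge 1\}$ contains your $*$-closure $\mathcal C$. But $\Hom^d(P,\na u\la q\ra[-i])=\Hom^{d-i}(P,\na u\la q\ra)$, and for $d=i\ge 1$ this is $\Hom(P,\na u\la q\ra)$, which is typically nonzero (for instance when $(u,q)=(s,n)$, the top of $P$). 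So the generators $\na u\la q\ra[-i]$ with $i\ge 1$ do \emph{not} lie in that class, and the containment fails. Your argument that $\ax\subset\mathcal C$ is correct, but $\mathcal C$ is not contained where you need it.

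The repair is easy and brings you back to the paper's line. Having established $\Hom^d(P,\na u\la q\ra)=0$ for $d\ge 1$ and all $(u,q)\in\Xi$, prove $\Hom^d(P,\Sigma_u\la q\ra)=0$ directly by induction on $u$ in $\le$: from $0\to\Sigma_u\la q\ra\to\na u\la q\ra\to Q_u\la q\ra\to 0$ one gets an exact sequence
\[
\Hom^{d-1}(P,Q_u\la q\ra)\to\Hom^d(P,\Sigma_u\la q\ra)\to\Hom^d(P,\na u\la q\ra)=0.
\]
For $d\ge 2$ the left term vanishes by the inductive hypothesis on the composition factors $\Sigma_v\la r\ra$ of $Q_u\la q\ra$ (which have $v<u$ and lie in $\Xi$ since $\na u\la q\ra\in\ax$). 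For $d=1$, use instead that $P$ is projective in $\ax$ together with Lemma~\ref{lem:ext-hom}, which gives $\Hom^1(P,\Sigma_u\la q\ra)\cong\Ext^1_{\ax}(P,\Sigma_u\la q\ra)=0$. This is precisely how the paper handles the one ``new'' simple $\Sigma_{s_0}\la n_0\ra$; your route simply applies the same device uniformly to every simple, which is arguably cleaner once the $d=1$ case is treated separately.
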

\begin{proof}
We proceed by induction on the size of $\Xi$.  If $\Xi = \varnothing$, there is nothing to prove.  Otherwise, let $(s,n)$ be a costandard-maximal element of $\Xi$, and let $\Psi = \Xi \smallsetminus \{(s,n)\}$.  Lemma~\ref{lem:xi-std} tells us that $\dex s\la n\ra$ is a projective cover of $\Sigma_s\la n\ra$ satisfying~\eqref{eqn:fc-proj}.

Next, let $R$ denote the kernel of the map $\dex s\la n\ra \thr \Sigma_s\la n\ra$.  $R$ has a composition series consisting of $R_s\la n\ra$ and various $\de s\la m\ra$ with $r \le m < n$.  We see thus that $R$ is contained in $\axg {\Psi}$.

Consider a pair $(t,m) \in \Psi$.  We assume inductively that $\Sigma_t\la m\ra$ admits a projective cover $P'$ in $\axg {\Psi}$ with a $\Psi$-standard filtration and satisfying~\eqref{eqn:fc-proj}.  In particular, we have $\Hom^2(P',R) = 0$.  We have already seen above that $\Hom^2(\dex s\la n\ra,R) = 0$, so we may invoke Proposition~\ref{prop:constr-proj} to obtain a projective cover $P$ of $\Sigma_t\la m\ra$ in $\ax$.  

The key observation now is that every $\Psi$-standard object is also $\Xi$-standard.  (This would not have been the case if we had instead defined $\Psi$ by deleting a standard-maximal element of $\Xi$.)  Thus, we now see from the exact sequence~\eqref{eqn:constr-proj} that $P$ has a $\Xi$-standard filtration.

We must now establish~\eqref{eqn:fc-proj}.  If $(u,p) \in \Psi$, we already know that
\[
\Hom^d(P', \Sigma_u\la p\ra) = \Hom^d(\dex s\la n\ra, \Sigma_u\la p\ra) = 0
\]
for all $d \ge 1$, so it follows that $\Hom^d(P, \Sigma_u\la p\ra) = 0$ as well.  It remains to show that
\[
\Hom^d(P, \Sigma_s\la n\ra) = 0.
\]
Consider the exact sequence
\[
\Hom^{d-1}(P, Q_s\la n\ra) \to \Hom^d(P, \Sigma_s\la n\ra) \to \Hom^d(P, \na s\la n\ra).
\]
The first term is already known to vanish because $Q_s\la n\ra \in \axg {\Psi}$, and the last term vanishes by Corollary~\ref{cor:tde-na-vanish} because $P$ has $\Xi$-standard filtration.  Thus, we have $\Hom^d(P, \Sigma_s\la n\ra) = 0$, as desired.
\end{proof}

\subsection{Main result}
\label{subsect:main}

Given a set $\Xi \subset S \times \Z$, let $\dx \subset \fD$ denote the full triangulated subcategory generated by $\ax$.  In other words, $\dx$ is full subcategory consisting of objects $X$ all of whose cohomology objects $H^i(X)$ lie in $\ax$.  Note that $\ax$ is the heart of a bounded $t$-structure on $\dx$, so we have a realization functor $\real: \Db(\ax) \to \dx$.  Composition with the inclusion functor gives us a natural functor $\Db(\ax) \to \fD$.

\begin{thm}\label{thm:qexc-dereq}
For any convex set $\Xi \subset S \times \Z$, the natural functor $\Db(\ax) \to \fD$ is fully faithful.  In particular, the realization functor
\[
\real: \Db(\fA) \to \fD
\]
is an equivalence of categories.
\end{thm}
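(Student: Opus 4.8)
The plan is to prove full faithfulness of $\Db(\ax) \to \fD$ for a finite convex $\Xi$ first, by exhibiting $\ax$ as the module category of a finite-dimensional algebra with enough projectives, and then to bootstrap to the general case by a colimit argument over the finite convex sets (which exist by Lemma~\ref{lem:exist-convex}). Concretely, the first move is to apply Proposition~\ref{prop:fc-proj} to get, for each simple $\Sigma_s\la n\ra \in \ax$, a projective cover $P_{s,n}$ with a $\Xi$-standard filtration satisfying the crucial vanishing $\Hom^d_\fD(P_{s,n}, X) = 0$ for all $d \ge 1$ and all $X \in \ax$. Set $P = \bigoplus P_{s,n}$ (a finite sum, since $\Xi$ is finite) and $A = \End_{\fA}(P)$; then $\ax \simeq A\text{-mod}$, and since $P$ is projective in $\ax$, the standard comparison theorem gives $\Ext^d_{\ax}(P, X) = 0$ for $d \ge 1$.

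The heart of the argument is then a standard one for showing a realization functor is fully faithful (cf.\ \cite{bbd}, \cite{bgs}): it suffices to check that for a generating family of objects --- here the projectives $P_{s,n}$ --- the map
\[
\Ext^d_{\ax}(P_{s,n}, P_{t,m}) \longrightarrow \Hom^d_\fD(P_{s,n}, P_{t,m})
\]
is an isomorphism for all $d$. For $d = 0$ this is the definition of $A$; for $d \ge 1$ the left side vanishes because $P_{t,m}$ is acyclic for $\Hom_{\ax}(P_{s,n}, {-})$, and the right side vanishes by~\eqref{eqn:fc-proj}. Since the objects $P_{s,n}$ generate $\Db(\ax)$ as a triangulated category and their images generate $\dx$, a dévissage on the triangulated structure (five lemma applied to the long exact sequences attached to distinguished triangles, together with compatibility of $\real$ with shifts and the Tate twist) upgrades this to full faithfulness of $\Db(\ax) \to \dx \hookrightarrow \fD$.

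To pass to an arbitrary convex $\Xi$, observe that $\ax = \bigcup \axg{\Psi}$ over finite convex $\Psi \subset \Xi$, and correspondingly $\dx = \bigcup \dgen{\Psi}\fD$ and $\Db(\ax) = \bigcup \Db(\axg{\Psi})$: any object or morphism of $\Db(\ax)$, being bounded with finite-length cohomology, involves only finitely many simples, hence lives in some $\Db(\axg{\Psi})$ with $\Psi$ finite convex (enlarging if necessary via Lemma~\ref{lem:exist-convex}). Full faithfulness is then inherited from the finite case. Finally, taking $\Xi = S \times \Z$ gives $\ax = \fA$ and $\dx = \fD$ (the latter by the generation axiom Definition~\ref{defn:qexc}\eqref{it:defq-gen}), so $\real: \Db(\fA) \to \fD$ is fully faithful; being essentially surjective --- its essential image is a triangulated subcategory containing all $\na s\la n\ra$, hence everything --- it is an equivalence.

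\textbf{The main obstacle} I anticipate is the bookkeeping in the dévissage step: one must be careful that the realization functor genuinely sends the chosen distinguished triangles among the $P_{s,n}$-resolutions to distinguished triangles in $\fD$, and that the isomorphism $\Ext^*_{\ax}(P_{s,n}, P_{t,m}) \simeq \Hom^*_\fD(P_{s,n}, P_{t,m})$ is induced by $\real$ and not merely an abstract coincidence of graded vector spaces --- this is where properties of $\real$ from~\cite{bbd, bei:dcps} are used. The colimit argument itself is routine once one confirms that a finite convex $\Psi$ can always be found containing the relevant finite set of simples, which is exactly Lemma~\ref{lem:exist-convex}.
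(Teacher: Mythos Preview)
Your proposal is correct and follows the paper's overall strategy: Proposition~\ref{prop:fc-proj} handles finite convex $\Xi$, then one bootstraps. The execution differs in two minor but instructive ways.

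For finite $\Xi$, the paper avoids your d\'evissage bookkeeping by a one-line appeal to universal $\delta$-functors: for fixed $Y \in \ax$, the system $\{\Hom^d_\fD({-},Y)\}_{d\ge 0}$ is a $\delta$-functor on $\ax$, effaceable by Proposition~\ref{prop:fc-proj}, hence canonically isomorphic to $\{\Ext^d_{\ax}({-},Y)\}$. This bypasses any need to check that the $P_{s,n}$ generate $\Db(\ax)$ as a triangulated category (which would require $\ax$ to have finite global dimension, not established here) and any tracking of what $\real$ does on resolutions; the obstacle you flagged simply does not arise.

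For infinite $\Xi$, instead of your colimit argument (which implicitly uses $\varinjlim_\Psi \Ext^d_{\axg{\Psi}}(X,Y) \cong \Ext^d_{\ax}(X,Y)$---true via Yoneda extensions in a finite-length category, but you did not say it), the paper fixes a single finite convex $\Psi$ with $X,Y \in \axg{\Psi}$ and inducts on $d$ using Lemma~\ref{lem:ext-hom}: each of the maps $\Ext^d_{\axg{\Psi}} \to \Ext^d_{\ax} \to \Hom^d_\fD$ is injective once both are isomorphisms in degrees${}< d$, and their composite is an isomorphism by the finite case, so each one is.
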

\begin{proof}
We first treat the case where $\Xi$ is finite.  For a fixed object $X \in \ax$, $\{\Ext^d(\cdot,X)\}_{d \ge 0}$ is a universal $\delta$-functor, and Proposition~\ref{prop:fc-proj} tells us that the $\delta$-functor $\{\Hom^d(\cdot,X)\}_{d \ge 0}$ is effaceable.  Since their $0$th parts agree, there is a canonical isomorphism of functors $\Ext^d(\cdot, X) \overset{\sim}{\to} \Hom^d(\cdot, X)$.  Therefore, the natural functor $D^b(\ax) \to \fD$ is full and faithful.

Now, suppose $\Xi$ is infinite.  For any finite convex subset $\Psi \subset \Xi$, consider the chain of maps
\[
\Ext^i_\Psi(X,Y) \overset{g}{\to} \Ext^i_\Xi(X,Y) \overset{h}{\to} \Hom^i(X,Y).
\]
We claim that $g$ and $h$ are both isomorphisms for all $i$.  This is clearly the case for $i = 0$ and $i = 1$.  Suppose, in fact, that it is known for $i = 0,1, \ldots, d-1$.  By Lemma~\ref{lem:ext-hom}, both $g$ and $h$ are injective for $i = d$.  We know by the previous paragraph that the composition $h \circ g$ is an isomorphism for all $i$, so it follows that $g$ and $h$ are isomorphisms as well.

We have shown that $\Ext^d_\Xi(X,Y) \simeq \Hom^d(X,Y)$ when $X,Y \in \axg {\Psi}$.  But given any two objects $X,Y \in \ax$, we know from Lemma~\ref{lem:exist-convex} that there exists a finite convex subset $\Psi \subset \Xi$ such that $X,Y \in \axg {\Psi}$.  It follows that $\Ext^d_\Xi(X,Y) \simeq \Hom^d(X,Y)$ for all $X,Y \in \ax$, so $D^b(\ax) \to \fD$ is full and faithful.
\end{proof}

\section{Notation for semisimple groups}
\label{sect:not-red}

\subsection{Representations and varieties}

As noted in Section~\ref{sect:prelim-ab}, we will work over a fixed algebraically closed field $\Bbbk$.  For an algebraic group $H$ over $\Bbbk$, let $\Rep(H)$ denote the category of rational representations of $H$, and $\Repf(H) \subset \Rep(H)$ the subcategory of finite-dimensional representations.  If $H \subset K$, we have the usual induction and restriction functors $\ind_H^K : \Rep(H) \to \Rep(K)$ and $\res^K_H: \Rep(K) \to \Rep(H)$.  We also use the derived functor $\Rind_H^K: \Db\Rep(H) \to \Db\Rep(K)$.  (Of course, $\res^K_H$ is exact.)

For any variety $X$ over $\Bbbk$, we write $\Bbbk[X]$ for the ring of regular functions on $X$.  If $H$ is an algebraic group acting on $X$, we denote by $\Coh^H(X)$ the abelian category of $H$-equivariant coherent sheaves on $X$.  For any $\cF \in \Db\Coh^H(X)$, its derived global sections $R\Gamma(\cF)$ may be regarded as an object of $\Db\Rep(H)$.

\subsection{Graded objects}

Let $\uRep(H)$ be the category of graded rational $H$-repre\-sent\-ations.  This is, of course, equivalent to $\Rep(H \times \Gm)$.  $\uRepf(H)$ is defined similarly.  For homogeneous components and Tate twists of objects of $\uRep(H)$, we retain the conventions introduced in Section~\ref{subsect:tate} for $\uVect_\Bbbk$.

Consider $\Bbbk\la m\ra$, the graded $H$-representation consisting of the trivial $H$-module concentrated in degree $m$.  We can regard this as an $(H \times \Gm)$-equivariant sheaf on $\mathrm{pt} = \Spec \Bbbk$.  If $X$ is a variety equipped with an action of $H \times \Gm$, we put $\cO_X\la m\ra = a^* \Bbbk\la m\ra$, where $a: X \to \mathrm{pt}$ is the constant map.  More generally, for any $\cF \in \Db\Coh^{H \times \Gm}(X)$, we put
\[
\cF\la m\ra = \cF \Lotimes \cO_X\la m \ra.
\]

We write $\Forg$ for any of the various functors that forget gradings.  In particular, for coherent sheaves, we have $\Forg: \Db\Coh^{H \times \Gm}(X) \to \Db\Coh^H(X)$.  

\subsection{Reductive groups}

Throughout the rest of the paper, $G$ will be a simply connected semisimple algebraic group over $\Bbbk$, and the characteristic of $\Bbbk$ will be assumed to be good for $G$.  Fix a Borel subgroup $B \subset G$ and a maximal torus $T \subset B$.  Let $U \subset B$ be the unipotent radical, and let $\fu$ be the Lie algebra of $U$.  Recall that there is a $T$-equivariant isomorphism of varieties
\begin{equation}\label{eqn:fake-exp}
e: \fu \to U.
\end{equation}
Let $\Lambda$ be the weight lattice of $T$.  We will think of $B$ as the ``negative'' Borel: we define $\Lambda^+ \subset \Lambda$ to be the set of dominant weights determined by declaring the weights of $T$ on $\fu$ to be the negative roots.  Let $W$ be the Weyl group, and let $w_0 \in W$ be the longest element.  For any $\lambda \in \Lambda$, let $\dom(\lambda)$ denote the unique dominant weight in the $W$-orbit of $\lambda$.

Let $\le$ denote the usual partial order on $\Lambda$.  That is, for $\mu, \lambda \in \Lambda$, we say that $\mu \le \lambda$ if $\lambda - \mu$ is a nonnegative integer linear combination of positive roots.  We also define a preorder $\unlhd$ on $\Lambda$ as follows: $\mu \unlhd \lambda$ if $\dom(\mu) \le \dom(\lambda)$.  Obviously, $\le$ and $\unlhd$ coincide on $\Lambda^+$.  

For any $\lambda \in \Lambda$, let $\Bbbk_\lambda$ denote the $1$-dimensional $T$-rep\-re\-sent\-ation of weight $\lambda$.  We may also regard this as a $B$-representation on which $U$ acts trivially.  For $\lambda \in \Lambda^+$, let $L(\lambda)$, $M(\lambda)$, and $N(\lambda)$ denote the simple module, Weyl module, and dual Weyl module, respectively, of highest weight $\lambda$.  These representations may sometimes be regarded as graded by placing them in degree $0$.

\subsection{Nilpotent cone and Springer resolution}

Let $\cN$ be the variety of nilpotent elements in the Lie algebra of $G$.  We will also work with the flag variety $\cB = G/B$ and the Springer resolution $\tcN = G \times^B \fu$.  All these varieties are acted on by $G$.  Let $\Gm$ act on $\cN$ by $(z,x) \mapsto z^2 x$, where $z \in \Gm$ and $x \in \cN$.  This action commutes with the action of $G$.  The same formula defines an action on $\fu$ commuting with that of $B$, and so an action on $\tcN$ commuting with that of $G$.  Finally, let $\Gm$ act trivially on $\cB$.  The obvious projection maps, which we denote
\[
\cN \overset{\pi}{\longleftarrow} \tcN \overset{p}{\longrightarrow} \cB,
\]
are both $(G \times \Gm)$-equivariant.  Our convention on the $\Gm$-action means that the graded rings $\Bbbk[\fu]$ and $\Bbbk[\cN]$ are concentrated in even, nonpositive degrees.  We do not endow $U$ with a $\Gm$-action, so the map~\eqref{eqn:fake-exp} only gives rise to an isomorphism
\begin{equation}\label{eqn:fake-exp-rep}
\Bbbk[U] \simto \Forg(\Bbbk[\fu])
\end{equation}
of \emph{ungraded} $T$-representations.

Any graded $B$-representation $V$ gives rise to a locally free $(G \times \Gm)$-equivariant sheaf on $\cB$, denoted $\cS(V)$.  Give a weight $\lambda \in \Lambda$, consider the object
\[
A(\lambda) = R\pi_* p^*\cS(\Bbbk_\lambda).
\]
This is called the \emph{Andersen--Jantzen sheaf} of weight $\lambda$.  (It is known~\cite[Theorem~2]{klt:fscb} that $A(\lambda)$ is actually a coherent sheaf, rather than a complex of sheaves, for $\lambda$ dominant, but we will not use this fact.)  For any $\lambda \in \Lambda$, let
\[
\cD_{\lhd \lambda}, \cD_{\unlhd \lambda} \subset \Db\Cohgm(\cN)
\]
be the full triangulated subcategories generated by all Tate twists of Andersen--Jantzen sheaves $A(\mu)\la n \ra$ with $\mu \lhd \lambda$ or $\mu \unlhd \lambda$, respectively.

Consider now the object in $\Db\uRep(G)$ given by $R\Gamma(A(\lambda)) \cong R\Gamma(p_*p^*\cS(\Bbbk_\lambda))$.  By the projection formula, we have $p_*p^*\cS(\Bbbk_\lambda) \cong \cS(\Bbbk[\fu] \otimes \Bbbk_\lambda)$, so
\begin{equation}\label{eqn:aj-ind}
R\Gamma(A(\lambda)) \cong \Rind_B^G( \Bbbk[\fu] \otimes \Bbbk_\lambda).
\end{equation}

The following lemma on representations of a Borel subgroup is certainly well-known, but we include a proof for completeness.

\begin{lem}\label{lem:wt-ext}
Let $\mu \in \Lambda$, and let $U$ and $V$ be rational  $B$-representations such that all weights of $U$ are${}\le \mu$ but no weights of $V$ are${}\le \mu$.  Then $\RHom(U,V) = 0$.
\end{lem}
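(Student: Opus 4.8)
The plan is to reduce to the case where $U$ and $V$ are finite-dimensional, and then to prove vanishing by comparing highest weights of the relevant modules in the derived induction. First I would observe that, since $\RHom$ commutes with filtered colimits in the first variable (and any rational representation is a union of finite-dimensional subrepresentations, each of which still has all its weights $\le\mu$), and similarly $V$ is the union of its finite-dimensional subrepresentations with no weights $\le\mu$, we may assume that both $U$ and $V$ are finite-dimensional. Next, by further filtering, it suffices to treat the case $U=\Bbbk_\nu$ and $V=\Bbbk_\lambda$ for a single weight $\nu\le\mu$ and a single weight $\lambda\not\le\mu$; note that since $\nu\le\mu$ and $\lambda\not\le\mu$ we certainly have $\lambda\neq\nu$, indeed $\lambda\not\le\nu$.

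The key step is then to compute $\RHom_{\Rep(B)}(\Bbbk_\nu,\Bbbk_\lambda)=\RHom_{\Rep(B)}(\Bbbk,\Bbbk_{\lambda-\nu})$, i.e.\ to show $H^i(B,\Bbbk_{\lambda-\nu})=0$ for all $i$ when $\lambda-\nu\not\le 0$ (equivalently, $\lambda\not\le\nu$). Here I would use the standard fact that $H^\bullet(B,-)=H^\bullet(U,-)^T$ computes $B$-cohomology via $U$-cohomology followed by taking $T$-invariants, together with the identification $H^\bullet(U,\Bbbk)\cong \Lambda^\bullet(\fu^*)$ (or, more crudely, the Hochschild complex $\Bbbk[U]^{\otimes\bullet}$): every weight appearing in $H^i(U,\Bbbk)$ is a nonnegative integer combination of the weights of $\fu^*$, i.e.\ is $\ge 0$ in the partial order $\le$. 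Tensoring with $\Bbbk_{\lambda-\nu}$ shifts all weights by $\lambda-\nu$, so every weight of $H^i(U,\Bbbk_{\lambda-\nu})$ is of the form $\lambda-\nu+(\text{something}\ge 0)$, hence is $\not\le 0$ whenever $\lambda-\nu\not\le 0$; in particular the weight-$0$ space is zero, so taking $T$-invariants kills everything and $H^i(B,\Bbbk_{\lambda-\nu})=0$.

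The main obstacle is making precise the claim that all weights of $H^\bullet(U,\Bbbk)$ lie in $\Z_{\ge 0}\{\text{weights of }\fu^*\}$ in a way that is valid in good (positive) characteristic, where the Koszul/Lie-algebra-cohomology description of $H^\bullet(U,\Bbbk)$ is not available verbatim. I would handle this by working directly with the rational cohomology of the unipotent group $U$: using the isomorphism~\eqref{eqn:fake-exp-rep} of $T$-representations $\Bbbk[U]\simto\Forg(\Bbbk[\fu])$, the Hochschild cochain complex computing $H^\bullet(U,\Bbbk_\chi)$ has terms $\Bbbk[U]^{\otimes n}\otimes\Bbbk_\chi$, and all weights of $\Bbbk[U]\cong\Bbbk[\fu]=\mathrm{Sym}(\fu^*)$ are nonnegative combinations of the weights of $\fu^*$ (which are the positive roots, since $\fu$ has the negative roots as weights). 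Hence all weights of every cochain group, and so of every cohomology group, of $H^\bullet(U,\Bbbk_\chi)$ lie in $\chi+\Z_{\ge 0}\{\text{positive roots}\}$. Applying this with $\chi=\lambda-\nu$ and combining with $H^i(B,\Bbbk_{\lambda-\nu})=H^i(U,\Bbbk_{\lambda-\nu})^T$ gives the vanishing, and unwinding the reductions yields $\RHom(U,V)=0$ in general.
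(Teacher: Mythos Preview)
Your core computation---reducing to $H^\bullet(B,\Bbbk_{\lambda-\nu})$, passing to cohomology of the unipotent radical, and using the Hochschild complex together with the $T$-equivariant isomorphism $\Bbbk[U]\cong\Bbbk[\fu]$ to control weights---is correct and rests on exactly the same key fact the paper uses. The problem is your opening reduction. The claim that ``$\RHom$ commutes with filtered colimits in the first variable'' is false: $\Hom(-,V)$ is contravariant and sends colimits to limits, and pushing $\Ext$ through an inverse limit needs a Mittag--Leffler-type condition you have not checked. So from $\RHom(U_i,V)=0$ for every finite-dimensional $U_i\subset U$ you cannot directly conclude $\RHom(U,V)=0$ when $U$ is infinite-dimensional. (The reduction for $V$ is legitimate \emph{after} $U$ has been made finite-dimensional, since then $\Ext^i_B(U,-)\cong H^i(B,U^*\otimes-)$ and rational group cohomology does commute with direct limits; but you invoke both reductions at once with the same justification.)

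The paper sidesteps this by working on the injective side. It notes that $\ind_T^B\Bbbk_\lambda\cong\Bbbk[U]\otimes\Bbbk_\lambda$ is an injective $B$-module all of whose weights are $\ge\lambda$, and builds from these an injective resolution $V\to I^\bullet$ in which no term has any weight $\le\mu$. Then $\Hom_B(U,I^n)=0$ for \emph{any} $U$ with weights $\le\mu$---a bare statement about $T$-weight spaces, needing no finiteness hypothesis on $U$. This is the same weight bookkeeping you carry out, packaged so that the vanishing is visible term-by-term in the resolution rather than only after taking cohomology; adopting it closes the gap cleanly.
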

\begin{proof}
Let $\lambda \in \Lambda$.  Note that $\ind_T^B \Bbbk_\lambda$ is an injective $B$-module, since induction takes injective modules to injective modules.  Since $B \cong T \ltimes U$, we have $\ind_T^B \Bbbk_\lambda \cong \Bbbk[U] \otimes \Bbbk_\lambda$.  By~\eqref{eqn:fake-exp-rep}, $\res^B_T \ind_T^B \Bbbk_\lambda \cong \res^B_T \Forg(\Bbbk[\fu]) \otimes \Bbbk_\lambda$.  The weights of $\Bbbk[\fu]$ are sums of positive roots, so we see that all weights of $\ind_T^B \Bbbk_\lambda$ are${}\ge \lambda$.

The $B$-module $V$ can be embedded in an injective module $I^0$ by taking a direct sum of copies of $\ind_T^B \Bbbk_\lambda$ as $\lambda$ varies over weights of $V$.  It follows from the preceding paragraph that every weight of $I$ is $\ge$ some weight of $V$.  More generally, we can extend this an injective resolution $(I^n)_{n \ge 0}$ of $V$ in which every weight of every term is $\ge$ some weight of $V$.

Since no weight of $V$ is${}\le \mu$, it follows that no weight of $I^n$ is${}\le \mu$ either, so $\Hom(U,I^n) = 0$ for all $n$.  Thus, $\RHom(U,V) = 0$.
\end{proof}

\subsection{Perverse coherent sheaves}
\label{subsect:pcoh}

Recall that $G$ acts on $\cN$ with finitely many orbits, and that each orbit has even dimension.  For an orbit $C$, let $\eta_C$ be its generic point, and let $i_C: \{\eta_C\} \hto C$ be the inclusion map.  An object $\cF \in \Db\Cohg(\cN)$ is said to be a \emph{perverse coherent sheaf} if the following two conditions hold:
\begin{align*}
H^i(i_C^*\cF) &= 0 &&\text{for all $\textstyle i > \frac{1}{2}\codim C$,} \\
H^i(i_C^!\cF) &= 0 &&\text{for all $\textstyle i < \frac{1}{2}\codim C$.}
\end{align*}
The second condition is equivalent to requiring that
\begin{align*}
H^i(i_C^*\D\cF) &= 0 &&\text{for all $\textstyle i > \frac{1}{2}\codim C$,}
\end{align*}
where $\D$ is the \emph{Serre--Grothendieck duality functor} given by
\[
\D = \cRHom({-}, \cO_\cN).
\]
In fact, the functor $\D$ can be defined using any \emph{equivariant dualizing complex}~\cite{bez:pc}.  The fact that $\cO_\cN$ is a dualizing complex is equivalent to the fact that it is Gorenstein, cf.~\cite[Theorem~5.3.2]{bk:fsmgr}.  There is a choice of shifts and Tate twists here; our normalization agrees with the convention of~\cite{bez:ctm} but not with that of~\cite{bez:qes}.

The category $\Pcohg(\cN)$ of perverse coherent sheaves has a number of features in common with the more familiar perverse constructible sheaves.  Key among these are that every object has finite length, and that the simple objects admit a characterization resembling that of intersection cohomology complexes.  Simple objects are classified by pairs $(C,\cV)$, where $\cV$ is an irreducible $G$-equivariant vector bundle on $C$.  The corresponding simple object will be denoted $\cIC(C,\cV)$.

The category $\Pcohgm(\cN)$ is defined in the same way as above.  (Recall that the orbits of $G \times \Gm$ coincide with those of $G$.)  The forgetful functor $\Forg: \Db\Cohgm(\cN) \to \Db\Cohg(\cN)$ restricts to an exact functor
\[
\Forg: \Pcohgm(\cN) \to \Pcohg(\cN)
\]
that takes simple objects of $\Pcohgm(\cN)$ to simple objects of $\Pcohg(\cN)$.

\section{Andersen--Jantzen sheaves and perverse coherent sheaves}
\label{sect:aj}

In this section, we prove a number of lemmas on Andersen--Jantzen sheaves.  We work with $(G \times \Gm)$-equivariant sheaves throughout.  For the most part, the $\Gm$-action will play no essential role; nearly every statement in this section has an obvious $G$-equivariant analogue, with the same proof.  The only exception to this is part~\eqref{it:qexc-mixed} of Proposition~\ref{prop:qexc}, whose statement and proof involve imposing conditions on Tate twists.

Many proofs in this section are closely modeled on those in~\cite[Section~3]{bez:qes}, suitably modified to handle the difficulties that arise in positive characteristic.

\begin{lem}\label{lem:aj-sgd}
For all $\lambda \in \Lambda$, we have $\D A(\lambda) \cong A(-\lambda)$.
\end{lem}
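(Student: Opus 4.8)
The plan is to compute the Serre--Grothendieck dual of $A(\lambda) = R\pi_* p^* \cS(\Bbbk_\lambda)$ by relating Serre duality on $\cN$ to relative Serre duality for the Springer resolution $\pi\colon \tcN \to \cN$. Since $\cN$ is Gorenstein with dualizing complex $\cO_\cN$ (up to the chosen shift and twist), and $\pi$ is a proper morphism, Grothendieck duality gives $\D \circ R\pi_* \cong R\pi_* \circ \D_{\tcN}$, where $\D_{\tcN} = \cRHom({-}, \omega_{\tcN})$ for an appropriate equivariant dualizing complex $\omega_{\tcN}$ on $\tcN$. So the first step is to identify $\omega_{\tcN}$: because $\tcN = G \times^B \fu$ is smooth, $\omega_{\tcN}$ is a line bundle (shifted), and it is pulled back along $p\colon \tcN \to \cB$ from a line bundle on $\cB$ twisted by the relative canonical bundle of the vector bundle $\fu$ over $\cB$. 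Concretely, $\omega_{\tcN} \cong p^* \cS(\Bbbk_{2\rho'})$ for the suitable weight coming from $\det \fu^*$ (the top exterior power of the dual of the tautological bundle), with the Tate twist dictated by the $\Gm$-weights on $\fu$. I would pin this down carefully, keeping track of the normalization of $\D$ fixed in Section~\ref{subsect:pcoh}.

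Next I would reduce the computation on $\tcN$ to a computation on $\cB$: for a locally free sheaf, $\cRHom(p^* \cS(\Bbbk_\lambda), \omega_{\tcN}) \cong p^* \cS(\Bbbk_{-\lambda}) \otimes \omega_{\tcN}$, which is again of the form $p^* \cS(\Bbbk_\mu)$ (up to shift/twist) for an explicit $\mu$ involving $-\lambda$ and the weight of $\omega_{\tcN}$. Applying $R\pi_*$ then yields $\D A(\lambda) \cong R\pi_* p^* \cS(\Bbbk_\mu)$, which is $A(\mu)$ up to the bookkeeping shifts. The remaining task is to verify that all the shifts, Tate twists, and the weight shift from $\omega_{\tcN}$ conspire to give exactly $\mu = -\lambda$ with zero net shift and twist, i.e. $\D A(\lambda) \cong A(-\lambda)$ on the nose. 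This is where the normalization of the duality functor and the choice of $\Gm$-action (recall $\Gm$ acts on $\fu$ so that $\Bbbk[\fu]$ sits in even nonpositive degrees) must be used precisely.

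I would organize the argument so that the key formula is $\D A(\lambda) \cong R\pi_*\, \cRHom_{\tcN}(p^*\cS(\Bbbk_\lambda), \omega_{\tcN})$, then observe $\cRHom_{\tcN}(p^*\cS(\Bbbk_\lambda), \omega_{\tcN}) \cong p^*\cS(\Bbbk_{-\lambda}) \otimes \omega_{\tcN}$, and finally check $p^*\cS(\Bbbk_{-\lambda}) \otimes \omega_{\tcN} \cong p^*\cS(\Bbbk_{-\lambda})$ as objects with the relevant shifts and twists built into the normalization. An equivalent and perhaps cleaner route uses that $\omega_{\tcN} \cong \pi^{!}\cO_\cN$ together with $\pi$ being semismall/birational and $\cN$ Gorenstein, so that $\pi^{!}\cO_\cN \cong \cO_{\tcN}$ up to the standard shift by $\dim \tcN$; if one sets up the normalization of $\D$ to absorb exactly this, the weight shift from $\det\fu^*$ cancels against the line bundle part of $\pi^! \cO_\cN$, leaving the bare statement.

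The main obstacle I expect is the meticulous tracking of all the normalizations: the shift-and-twist convention for $\D$ (which the paper explicitly flags as differing between \cite{bez:ctm} and \cite{bez:qes}), the relative dualizing complex of the vector bundle $p\colon \tcN \to \cB$ together with its $\Gm$-weight, and the dualizing complex of $\cN$ itself. None of the individual steps is deep---it is Grothendieck duality plus the projection formula plus smoothness of $\tcN$---but getting $\D A(\lambda) \cong A(-\lambda)$ \emph{without} any leftover shift or Tate twist requires that these three ingredients be balanced exactly. A secondary subtlety is equivariance: one must use an \emph{equivariant} form of Grothendieck duality and an equivariant dualizing complex throughout, as recalled in Section~\ref{subsect:pcoh}, but since everything in sight is $(G \times \Gm)$-equivariant this is routine.
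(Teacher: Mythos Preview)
Your proposal is correct, and the ``cleaner route'' you sketch at the end is exactly the paper's proof: one uses Grothendieck duality $\D\circ R\pi_* \cong R\pi_*\circ \D_{\tcN}$ with $\D_{\tcN}=\cRHom({-},\pi^!\cO_\cN)$, invokes the fact that $\pi^!\cO_\cN\cong\cO_{\tcN}$ (on the nose, no residual shift or twist), and then computes $\cRHom(p^*\cS(\Bbbk_\lambda),\cO_{\tcN})\cong p^*\cS(\Bbbk_{-\lambda})$ since $p^*\cS(\Bbbk_\lambda)$ is a line bundle. The paper simply cites \cite[Lemma~3.4.2 and Lemma~5.1.1]{bk:fsmgr} for the isomorphism $\pi^!\cO_\cN\cong\cO_{\tcN}$ and is done in three lines; your extended discussion of $\omega_{\tcN}$ via $\det\fu^*$ and the worry about a leftover shift by $\dim\tcN$ are unnecessary once that input is in hand.
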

\begin{proof}
Recall that proper pushforward $R\pi_*$ commutes with Serre--Grothen\-dieck duality, where the duality functor on $\tcN$ is given by $\D_{\tcN} = \cRHom({-}, \pi^!\cO_\cN)$.  It is a consequence of~\cite[Lemma~3.4.2 and Lemma~5.1.1]{bk:fsmgr} that $\pi^!\cO_\cN \cong \cO_{\tcN}$, so
\[
\D_{\tcN}(p^*\cS(\Bbbk_\lambda)) \cong \cRHom(p^*\cS(\Bbbk_\lambda), \cO_{\tcN}) \cong p^*\cS(\Bbbk_{-\lambda}),
\]
and the lemma follows.
\end{proof}

\begin{lem}\label{lem:aj-pcoh}
For all $\lambda \in \Lambda$, we have $A(\lambda) \in \Pcohgm(\tcN)$.  In particular, for all $\mu, \lambda \in \Lambda$, we have $\uHom^i(A(\mu), A(\lambda)) = 0$ if $i < 0$.
\end{lem}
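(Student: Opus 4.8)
The plan is to verify the defining conditions for a perverse coherent sheaf on $\tcN$ directly from the description $A(\lambda) = R\pi_* p^* \cS(\Bbbk_\lambda)$, and then to deduce the $\uHom$-vanishing statement as a formal consequence. For the perversity claim, the key point is that $\tcN$ is smooth (being a vector bundle over $\cB$), so the natural perversity on $\Db\Cohgm(\tcN)$ is the one associated to the dimension function $d \mapsto \lceil d/2 \rceil$ as on $\cN$; but here, because $\tcN$ is irreducible and smooth of even dimension, the perverse coherent $t$-structure actually reduces to a shift of the standard one. More precisely, $p^* \cS(\Bbbk_\lambda)$ is a line bundle on $\tcN$, hence (after the normalizing shift fixed in Section~\ref{subsect:pcoh}) a perverse coherent sheaf on $\tcN$. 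So the first step is to pin down exactly how the perversity on $\tcN$ interacts with the smoothness, and observe that line bundles, suitably shifted, are perverse.

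The second step is to show that $R\pi_*$ sends perverse coherent sheaves on $\tcN$ to perverse coherent sheaves on $\cN$. This is where I expect the main work to lie, and it is presumably the analogue of a statement in \cite{bez:qes, bez:pc}: since $\pi: \tcN \to \cN$ is proper and semismall (a classical fact about the Springer resolution, valid in good characteristic), $R\pi_*$ is $t$-exact for the perverse coherent $t$-structures. I would invoke the semismallness of $\pi$ together with the general yoga of perverse coherent sheaves under proper pushforward: for $C \subset \cN$ an orbit with inclusion $i_C$, one estimates $H^i(i_C^* R\pi_* \mathcal{F})$ via base change along the fiber $\pi^{-1}(C)$, whose dimension is controlled by semismallness, giving the required vanishing for $i > \tfrac12\codim C$; the $i_C^!$ condition follows by the duality argument, using Lemma~\ref{lem:aj-sgd} (namely $\D A(\lambda) \cong A(-\lambda)$) to convert the costalk condition into a stalk condition. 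Assembling these two steps yields $A(\lambda) \in \Pcohgm(\tcN)$.

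For the final assertion, once $A(\mu), A(\lambda) \in \Pcohgm(\cN)$ — which follows from the above applied to $\cN$, or one applies the perversity of $A$ on $\tcN$ and $t$-exactness of $R\pi_*$ — the vanishing $\uHom^i(A(\mu), A(\lambda)) = 0$ for $i < 0$ is immediate: the abelian category $\Pcohgm(\cN)$ is the heart of a $t$-structure on $\Db\Cohgm(\cN)$, hence an admissible subcategory in the sense of Section~\ref{subsect:gen}, and negative $\Hom$-groups between objects of the heart of a $t$-structure vanish identically (there are no negative $\Ext$'s in an abelian category, and the map $\uExt^i \to \uHom^i$ is an isomorphism for $i \le 1$ and injective beyond by Lemma~\ref{lem:ext-hom}; but for $i<0$ one simply uses $\Hom_{\fT}(X, Y[i]) = 0$ for $X, Y$ in the heart). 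The only genuine obstacle is the $t$-exactness of $R\pi_*$, i.e.\ the semismallness input; everything else is either bookkeeping with the normalization of the duality functor or a formal property of $t$-structures.
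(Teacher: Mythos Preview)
Your approach is essentially the paper's: semismallness of $\pi$ controls the stalk condition, Lemma~\ref{lem:aj-sgd} handles the costalk condition by duality, and the $\uHom^i$-vanishing for $i<0$ is the formal fact about hearts of $t$-structures. Two remarks are worth making. First, the ``$\tcN$'' in the statement is a typo for $\cN$, so your Step~1 about perverse coherent sheaves on $\tcN$ is unnecessary overhead; the paper never introduces $\Pcohgm(\tcN)$ and works directly on $\cN$. Second, rather than invoking base change (which for coherent sheaves requires flatness or other hypotheses you have not secured), the paper bounds $R^i\pi_*(p^*\cS(\Bbbk_\lambda))$ on an open neighborhood $U_C$ of each orbit $C$ using the fiber-dimension estimate \cite[Corollary~III.11.2]{har:ag}, and then applies the exact functor $i_C^*$. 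Your phrasing of the goal as ``$R\pi_*$ is $t$-exact'' is also stronger than what is needed or proved: the paper only checks the perversity conditions for the specific line bundles $p^*\cS(\Bbbk_\lambda)$, not for arbitrary objects.
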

\begin{proof}
Recall that the Springer resolution is semismall~\cite[Theorem~10.11]{jan:nort}.  This means that for any closed point $x$ in an orbit $C \subset \cN$, we have $\dim \pi^{-1}(x) \le \frac{1}{2}\codim C$.  Let $U_C$ be the union of the nilpotent orbits whose closure contains $C$.  Thus, $U_C$ is an open $G$-stable subset of $\cN$, and $C$ is the unique closed orbit therein.  Let $\tcN_C = \pi^{-1}(U_C)$.  Every fiber of the proper map $\pi: \tcN_C \to U_C$ has dimension${}\le \frac{1}{2}\codim C$, so by~\cite[Corollary~III.11.2]{har:ag}, it follows that $R^i\pi_*(p^*\cS(\Bbbk_\lambda))|_{\tcN_U} = 0$ for $i > \frac{1}{2}\codim C$.  Since $i_C^*$ is an exact functor (where $i_C: \{\eta_C\} \hto \cN$ is as in Section~\ref{subsect:pcoh}), it follows that $H^i(i_C^*A(\lambda)) = 0$ for $i > \frac{1}{2}\codim C$.  The same reasoning applies to $A(-\lambda) \cong \D A(\lambda)$, so $A(\lambda) \in \Pcohgm(\cN)$.

The last assertion of the lemma is just the general fact that $\Hom^i(X,Y)$ always vanishes for $i < 0$ if $X$ and $Y$ are in the heart of some $t$-structure.
\end{proof}

\begin{lem}\label{lem:demazure}
Let $\lambda, \mu \in \Lambda$ be two weights in the same $W$-orbit.  If $\mu \le \lambda$, then
\[
A(\mu) \in \cD_{\lhd \lambda} * A(\lambda)\la -2\ell\ra,
\]
where $\ell$ is the length of the shortest element $w \in W$ such that $w \lambda = \mu$.
\end{lem}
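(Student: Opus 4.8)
The plan is to reduce to the case where $w$ is a single simple reflection and then induct on $\ell = \ell(w)$. Suppose $\mu \le \lambda$ and $w\lambda = \mu$ with $\ell(w) = \ell$ minimal. Write $w = s_\alpha w'$ for a simple reflection $s_\alpha$ with $\ell(w') = \ell - 1$, and set $\nu = w'\lambda$, so that $\mu = s_\alpha \nu$ and $\ell(w') < \ell(w)$. Minimality of $w$ forces $\la \nu, \alpha^\vee\ra > 0$ (otherwise $s_\alpha$ would not decrease length when applied to $\nu$, or $\mu$ would already be reachable by a shorter element), hence $\mu = \nu - \la\nu,\alpha^\vee\ra\alpha < \nu$. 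By the inductive hypothesis applied to the pair $(\lambda, \nu)$ — note $\nu \le \lambda$ and $\ell(w') = \ell - 1$ — we have
\[
A(\nu) \in \cD_{\lhd\lambda} * A(\lambda)\la -2(\ell-1)\ra.
\]
So it suffices to handle the rank-one step: show that
\[
A(\mu) \in \cD_{\lhd\lambda} * A(\nu)\la -2\ra
\]
when $\mu = s_\alpha\nu$, $\alpha$ simple, $\mu < \nu$, and $\dom(\mu) = \dom(\nu) \le \dom(\lambda)$, so that $A(\nu)\la -2\ra \in \cD_{\unlhd\lambda}$ and all the ``error'' terms we produce will lie in $\cD_{\lhd\lambda}$.

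The heart of the argument is a \emph{Demazure-type short exact sequence} on the flag variety relating $\cS(\Bbbk_\mu)$, $\cS(\Bbbk_\nu)$, and sheaves supported on the closures of smaller Schubert-type loci, or equivalently pushed forward along the minimal parabolic $P_\alpha \supset B$. Concretely, let $q: G/B \to G/P_\alpha$ be the $\mathbb{P}^1$-bundle attached to $\alpha$. On each $\mathbb{P}^1$-fiber, $\Bbbk_\nu$ restricts to $\cO(c)$ with $c = \la\nu,\alpha^\vee\ra > 0$, and $\Bbbk_\mu$ restricts to $\cO(-c - 2)$ after the shift by $\rho$-type conventions; the standard exact sequence $0 \to \cO(-c-2) \to q^*q_*(\text{\dots}) \to \cdots$ on $\mathbb{P}^1$, twisted appropriately, expresses $\cS(\Bbbk_\mu)$ (up to a shift) as built from $\cS(\Bbbk_\nu)\la -2\ra$ (the Tate twist $\la -2\ra$ records the $\Gm$-weight on the relevant line, consistent with the $z \mapsto z^2$ convention) together with sheaves that are pulled back from $G/P_\alpha$ — i.e.\ with vanishing higher cohomology along fibers and with weights strictly between $\mu$ and $\nu$ in the dominance preorder. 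Applying $R\pi_* p^*$ (which is $(G\times\Gm)$-equivariant and sends the $*$-operation on $G/B$ to the $*$-operation on $\cN$), the pulled-back pieces land in $\cD_{\lhd\lambda}$ because their weights $\xi$ satisfy $\dom(\xi) < \dom(\nu) \le \dom(\lambda)$, while the main term becomes $A(\nu)\la -2\ra$. This yields the required membership $A(\mu) \in \cD_{\lhd\lambda} * A(\nu)\la -2\ra$, and composing with the inductive step (using associativity of $*$, established after~\cite[Lemme~1.3.10]{bbd}) gives $A(\mu) \in \cD_{\lhd\lambda} * A(\lambda)\la -2\ell\ra$.

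The main obstacle I expect is bookkeeping the Tate twists and the dominance preorder $\unlhd$ carefully enough to be sure the auxiliary terms really do lie in $\cD_{\lhd\lambda}$ rather than merely $\cD_{\unlhd\lambda}$. The subtlety is that $\mu$ and $\nu$ lie in the same $W$-orbit, so they have the \emph{same} dominant representative; one must check that every intermediate weight $\xi$ arising from the $\mathbb{P}^1$-computation (the weights $\xi$ with $\la\xi,\alpha^\vee\ra$ strictly between $\la\mu,\alpha^\vee\ra$ and $\la\nu,\alpha^\vee\ra$, say) has $\dom(\xi) \le \dom(\lambda)$ \emph{and} $\dom(\xi) \neq \dom(\lambda)$ — in fact $\dom(\xi) < \dom(\nu)$ — which is where one uses that $\alpha$ is simple and $\nu$ is not already dominant on the $\alpha$-wall. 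A secondary point is verifying that the correct power of the Tate twist is exactly $\la -2\ra$ per simple reflection; this follows from the convention $z \cdot x = z^2 x$ on $\cN$ together with the fact that each Demazure step introduces exactly one factor of the tautological line bundle on a $\mathbb{P}^1$, but it needs to be stated cleanly so the exponent $-2\ell$ in the conclusion comes out right.
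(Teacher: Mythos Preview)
Your reduction to the rank-one case by induction on $\ell$ is correct and matches the paper's approach, as does your identification of the key difficulty: checking that the intermediate weights $\xi$ strictly between $\mu$ and $\nu$ on the $\alpha$-string satisfy $\dom(\xi) < \dom(\nu)$, hence lie in $\cD_{\lhd\lambda}$.

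However, your mechanism for the rank-one step has a genuine gap: the Tate twist $\la -2\ra$ cannot come from an exact sequence on $G/B$ alone. Recall that $\Gm$ acts \emph{trivially} on $\cB = G/B$; the $\Gm$-action lives entirely in the $\fu$-fiber direction of $\tcN = G\times^B\fu$. So ``one factor of the tautological line bundle on a $\mathbb{P}^1$'' carries no $\Gm$-weight, and no Demazure-type sequence of line bundles on $G/B$ can produce a nontrivial Tate twist after $R\pi_*p^*$. What the paper does instead is:
\begin{itemize}
\item introduce the $B$-module $Q = \Bbbk_{\rho-\alpha}\otimes\res^{P_\alpha}_B\ind_B^{P_\alpha}\Bbbk_{\nu-\rho}$, whose weights are exactly $\nu-\alpha,\ldots,s_\alpha\nu$, and observe that two filtrations of $Q$ (resp.\ $Q\otimes\Bbbk_\alpha$) give $A(\mu)\in\cD_{\lhd\lambda}*R\pi_*p^*\cS(Q)$ and $R\pi_*p^*\cS(Q\otimes\Bbbk_\alpha)\in\cD_{\lhd\lambda}*A(\nu)$;
\item prove $R\pi_*p^*\cS(Q)\cong R\pi_*p^*\cS(Q\otimes\Bbbk_\alpha)\la -2\ra$ using the Koszul sequence $0\to\Bbbk[\fu]\otimes\Bbbk_\alpha\la -2\ra\to\Bbbk[\fu]\to\Bbbk[\fu_\alpha]\to 0$ (here $\alpha\in(\Bbbk[\fu])_{-2}$ is where the twist arises), together with the vanishing $\Rind_B^{P_\alpha}Q=0$, which follows from $\la\alpha^\vee,\rho-\alpha\ra=-1$.
\end{itemize}
Your $\mathbb{P}^1$-pushforward intuition is morally the vanishing $\Rind_B^{P_\alpha}Q=0$, but without the Koszul step on the $\fu$-side you cannot link $\cS(Q)$ to $\cS(Q\otimes\Bbbk_\alpha)$ with the correct twist, and the argument does not close.
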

\begin{proof}
The statement is trivial if $\mu = \lambda$, so assume that $\mu < \lambda$.  It is easily seen by induction on $\ell$ that it suffices to prove this in the case where $\mu = s\lambda$ for some simple reflection $s$, say corresponding to the simple root $\alpha$.  Let $n = \la \alpha^\vee, \lambda\ra$.  Since $s\lambda < \lambda$, we have $n > 0$.

Let $P_\alpha \subset G$ be the minimal parabolic subgroup corresponding to $\alpha$, and let $p_\alpha: G/B \to G/P_\alpha$ be the projection map.  Let $\rho = \frac{1}{2} \sum \alpha$, where the sum runs over all positive roots.  Recall that $G$ is assumed to be simply connected, so $\rho$ lies in the weight lattice for $G$.  Let $Q = \Bbbk_{\rho-\alpha} \otimes \res^{P_\alpha}_B \ind_B^{P_\alpha} \Bbbk_{\lambda - \rho}$.  Since $\la \alpha^\vee, \lambda - \rho \ra = n - 1$, the weights of $\ind_B^{P_\alpha} \Bbbk_{\lambda - \rho}$ are $\lambda - \rho, \lambda - \rho - \alpha, \ldots, \lambda - \rho - (n-1)\alpha$.  Thus, the weights of $Q$ are
\[
\lambda - \alpha, \lambda - 2\alpha, \ldots, \lambda - n\alpha = s\lambda.
\]
A standard fact relating induction, restriction, and tensor products tells us that
\begin{equation}\label{eqn:rindq}
\Rind_B^{P_\alpha} Q \cong \Rind_B^{P_\alpha} \Bbbk_{\rho - \alpha} \Lotimes \Rind_B^{P_{\alpha}} \Bbbk_{\lambda - \rho} = 0,
\end{equation}
where the last equality follows from the fact that $\la\alpha^\vee, \rho - \alpha\ra = -1$.

From the weights of $Q$, we see that there is a short exact sequence of $B$-modules
\[
0 \to \Bbbk_{s\lambda} \to Q \to K_1 \to 0,
\]
where the weights of $K_1$ are${}\lhd \lambda$.  Applying $R\pi_* \circ p^* \circ \cS$, we see that
\begin{equation}\label{eqn:dem1}
A(s\lambda) \in \cD_{\lhd \lambda} * R\pi_*p^*\cS(Q).
\end{equation}
Similarly, there is a short exact sequence
\[
0 \to K_2 \to Q \otimes \Bbbk_\alpha \to \Bbbk_\lambda \to 0
\]
where $K_2$ has weights that are${}\lhd \lambda$.  We deduce that
\begin{equation}\label{eqn:dem2}
R\pi_*p^*\cS(Q \otimes \Bbbk_\alpha) \in \cD_{\lhd \lambda} * A(\lambda).
\end{equation}
In view of~\eqref{eqn:dem1} and~\eqref{eqn:dem2}, we see that the lemma will follow once we prove that
\begin{equation}\label{eqn:dem3}
R\pi_*p^*\cS(Q \otimes \Bbbk_\alpha)\la -2\ra \cong R\pi_*p^*\cS(Q).
\end{equation}

Let $\fu_\alpha$ be the Lie algebra of the unipotent radical of $P_\alpha$, and consider its coordinate ring $\Bbbk[\fu_\alpha]$.  It is the quotient of the graded ring $\Bbbk[\fu]$ by the ideal generated by $\alpha \in \fu^* = (\Bbbk[\fu])_{-2}$.  In other words, we have a short exact sequence
$
0 \to \Bbbk[\fu] \otimes \Bbbk_\alpha\la-2\ra \to \Bbbk[\fu] \to \Bbbk[\fu_\alpha] \to 0
$
of $(B \times \Gm)$-equivariant $\Bbbk[\fu]$-modules, or equivalently of objects in $\Coh^{B \times \Gm}(\fu)$.  A construction analogous to that of $\cS$ then gives us a short exact sequence
\[
0 \to p^*\cS(\Bbbk_\alpha)\la-2\ra \to \cO_{\tcN} \to i_*\cO_{\tcN_\alpha} \to 0
\]
in $\Cohgm(\tcN)$.  Here $\tcN_\alpha = G \times^B \fu_\alpha$, and $i: \tcN_\alpha \to \tcN$ is the inclusion map.  Tensoring with $p^*\cS(Q)$, we see that~\eqref{eqn:dem3} would follow if we knew that
\begin{equation}\label{eqn:dem4}
R\pi_*(i_*\cO_{\tcN_\alpha} \otimes p^*\cS(Q)) = 0.
\end{equation}
Since $\cN$ is an affine variety, $R\Gamma$ kills no nonzero object of $\Db\Cohgm(\cN)$, so it suffices to check that the object $R\Gamma(R\pi_*(i_*\cO_{\tcN_\alpha} \otimes p^*\cS(Q))) \cong R\Gamma(Rp_*(i_*\cO_{\tcN_\alpha} \otimes p^*\cS(Q)))$ vanishes.  By the projection formula. we have $Rp_*(i_*\cO_{\tcN_\alpha} \otimes p^*\cS(Q)) \cong \cS(\Bbbk[\fu_\alpha] \otimes Q)$, so to prove~\eqref{eqn:dem3}, we must check that $R\Gamma(\cS(\Bbbk[\fu_\alpha] \otimes Q)) = 0$, or
\begin{equation}\label{eqn:dem5}
\Rind_B^G (\Bbbk[\fu_\alpha] \otimes Q) = 0.
\end{equation}
But $\Rind_B^{P_{\alpha}}(\Bbbk[\fu_\alpha] \otimes Q) \cong \Bbbk[\fu_\alpha] \Lotimes \Rind_B^{P_\alpha} Q$, so~\eqref{eqn:dem5} follows from~\eqref{eqn:rindq}.
\end{proof}

\begin{lem}\label{lem:ratsing}
For any $\mu \in \Lambda^+$, we have $R\pi_*p^*\cS(M(\mu)) \cong \cO_\cN \otimes M(\mu)$.  Moreover, there are weights $\nu_1, \ldots, \nu_k$ such that
\begin{equation}\label{eqn:ratsing}
\cO_\cN \otimes M(\mu) \in A(\nu_1) * \cdots * A(\nu_k) * A(\mu)
\end{equation}
where either $\nu_i \lhd \mu$ or $\nu_i \in W\mu$ but $\nu_i \ne \mu$ for each $i$.

As a consequence, $\cO_\cN \otimes M(\mu) \in \Pcohgm(\cN)$, and there is a surjective map $M(\mu) \to A(\mu)$.
\end{lem}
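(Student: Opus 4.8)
The statement to establish has two parts: first, the computation $R\pi_*p^*\cS(M(\mu)) \cong \cO_\cN \otimes M(\mu)$ together with the filtration~\eqref{eqn:ratsing}; and second, the two consequences---that $\cO_\cN \otimes M(\mu)$ is perverse and that there is a surjection $M(\mu) \thr A(\mu)$. The plan is to prove the filtration statement first, deduce the isomorphism from it, and then read off the two corollaries.

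For the filtration, I would start from a Weyl filtration of the restriction $\res^G_B M(\mu)$ viewed as a $B$-module---more precisely, I would use the fact that $M(\mu)$ has a $B$-module filtration whose subquotients are the one-dimensional modules $\Bbbk_\nu$ as $\nu$ runs (with multiplicity) over the weights of $M(\mu)$, and that these weights are $\mu$ together with various $\nu \lhd \mu$ or $\nu \in W\mu \smallsetminus \{\mu\}$. Ordering the filtration so that $\Bbbk_\mu$ appears as the top quotient and applying the exact functor $R\pi_* \circ p^* \circ \cS$ to each short exact sequence in turn, one obtains~\eqref{eqn:ratsing} with the $A(\nu_i)$ exactly the Andersen--Jantzen sheaves attached to the lower weights, using associativity of the $*$-operation (\cite[Lemme~1.3.10]{bbd}). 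The key point making the functor well-behaved here is that $\cS$ is exact on short exact sequences of $B$-modules and $p^*$ is exact, while $R\pi_*$ is triangulated, so each short exact sequence of $B$-modules yields a distinguished triangle of the required shape.

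Next I would establish $R\pi_*p^*\cS(M(\mu)) \cong \cO_\cN \otimes M(\mu)$. Since $p_*p^*\cS(\Bbbk_\lambda) \cong \cS(\Bbbk[\fu] \otimes \Bbbk_\lambda)$ by the projection formula, the analogous computation with $M(\mu)$ in place of $\Bbbk_\lambda$ gives $Rp_*p^*\cS(M(\mu)) \cong \cS(\Bbbk[\fu] \otimes \res^G_B M(\mu))$, and I want $\Rind_B^G(\Bbbk[\fu] \otimes \res^G_B M(\mu)) \cong \Bbbk[\cN] \otimes M(\mu)$ as an object of $\Db\uRep(G)$. By the tensor identity $\Rind_B^G(\Bbbk[\fu] \otimes \res^G_B M(\mu)) \cong \Rind_B^G(\Bbbk[\fu]) \Lotimes M(\mu)$, so this reduces to $\Rind_B^G(\Bbbk[\fu]) \cong \Bbbk[\cN]$, i.e. to the higher vanishing $R^i\pi_*\cO_{\tcN} = 0$ for $i > 0$ and $\pi_*\cO_{\tcN} \cong \cO_\cN$. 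This is precisely the statement that $\cN$ has rational singularities, which holds in good characteristic---this is the geometric input I would cite (it underlies the normalizations in~\cite{bk:fsmgr, klt:fscb}). Combined with $R\Gamma$ being conservative on $\Db\Cohgm(\cN)$ ($\cN$ affine) and faithfully reflecting isomorphisms, this pins down $R\pi_*p^*\cS(M(\mu))$.

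Finally, the two consequences. Perversity of $\cO_\cN \otimes M(\mu)$ follows from~\eqref{eqn:ratsing}: each $A(\nu_i)$ and $A(\mu)$ lies in $\Pcohgm(\cN)$ by Lemma~\ref{lem:aj-pcoh}, and $\Pcohgm(\cN)$, being the heart of a $t$-structure, is closed under extensions, hence under the $*$-operation; so membership in $A(\nu_1) * \cdots * A(\nu_k) * A(\mu)$ forces $\cO_\cN \otimes M(\mu) \in \Pcohgm(\cN)$. For the surjection $M(\mu) \thr A(\mu)$: the filtration~\eqref{eqn:ratsing} expresses $\cO_\cN \otimes M(\mu)$ as built from the $A(\nu_i)$ with $A(\mu)$ as the ``last'' term, which in the abelian category $\Pcohgm(\cN)$ means there is a surjection $\cO_\cN \otimes M(\mu) \thr A(\mu)$ onto this top quotient; precomposing with the unit map $M(\mu) = \Bbbk \otimes M(\mu) \to \cO_\cN \otimes M(\mu)$ (inclusion of constants, which is a map of equivariant sheaves) gives a map $M(\mu) \to A(\mu)$, and one checks it is surjective by noting that on the open orbit---or simply by the construction of the filtration---the composite hits a generating section. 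I expect the main obstacle to be the rational-singularities input: verifying $\Rind_B^G(\Bbbk[\fu]) \cong \Bbbk[\cN]$ in good positive characteristic, which is where the good-characteristic hypothesis is genuinely used and where one must invoke the cited structural results rather than argue by hand.
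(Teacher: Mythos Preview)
Your approach is essentially the paper's: filter $\res^G_B M(\mu)$ by characters with $\Bbbk_\mu$ on top, apply $R\pi_* p^* \cS$, and invoke $R\pi_*\cO_{\tcN}\cong\cO_\cN$ (rational singularities in good characteristic, \cite[Theorem~5.3.2]{bk:fsmgr}). Two points are worth tightening.

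First, for the isomorphism $R\pi_*p^*\cS(M(\mu)) \cong \cO_\cN \otimes M(\mu)$, the paper argues directly at the sheaf level: since $M(\mu)$ is a $G$-module, $p^*\cS(M(\mu)) \cong \pi^*(\cO_\cN \otimes M(\mu))$, so the projection formula for $\pi$ gives $R\pi_*p^*\cS(M(\mu)) \cong R\pi_*\cO_{\tcN} \Lotimes (\cO_\cN \otimes M(\mu)) \cong \cO_\cN \otimes M(\mu)$. Your route through $R\Gamma$ and the tensor identity reaches the same destination, but as written it only establishes an isomorphism ``as an object of $\Db\uRep(G)$''; to conclude an isomorphism in $\Db\Cohgm(\cN)$ via affineness you need the isomorphism to respect the $\Bbbk[\cN]$-module structure (or, equivalently, to exhibit an actual map of sheaves and then check it on global sections). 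The direct projection-formula argument avoids this bookkeeping.

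Second, the surjection in the statement means the map $\cO_\cN \otimes M(\mu) \to A(\mu)$ of perverse coherent sheaves, and this drops out immediately: the distinguished triangle $\cK \to \cO_\cN \otimes M(\mu) \to A(\mu) \to$ coming from~\eqref{eqn:ratsing} has all three terms in $\Pcohgm(\cN)$ by Lemma~\ref{lem:aj-pcoh}, so it is a short exact sequence there. Your detour through a ``unit map $M(\mu)\to \cO_\cN\otimes M(\mu)$'' and an open-orbit check is unnecessary and not well-posed (a map from a finite-dimensional representation to a sheaf on $\cN$ has no obvious meaning as a surjection).
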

\begin{proof}
Since $\pi^*(\cO_\cN \otimes M(\mu)) \cong p^*\cS(M(\mu))$, the projection formula implies that $R\pi_*p^*\cS(M(\mu)) \cong R\pi_*\cO_{\tcN} \Lotimes (\cO_\cN \otimes M(\mu))$.  But $R\pi_*\cO_{\tcN} \cong \cO_\cN$ by~\cite[Theorem~5.3.2]{bk:fsmgr}, so $R\pi_*p^*\cS(M(\mu)) \cong \cO_\cN \otimes M(\mu)$.

Next, there is a surjective map of $B$-representations $\res^G_B M(\mu) \to \Bbbk_\mu$.  The kernel of this map has a filtration whose subquotients are various $\Bbbk_{\nu_1}, \ldots, \Bbbk_{\nu_k}$, where either $\nu_i \lhd \mu$ or $\nu_i \in W\mu$ and $\nu < \mu$.  Applying the functor $R\pi_*p^*\cS$, we see that~\eqref{eqn:ratsing} holds. It now follows from Lemma~\ref{lem:aj-pcoh} that $R\pi_*p^*\cS(M(\mu)) \in \Pcohgm(\cN)$.  In particular, there is a distinguished triangle
\[
\cK \to \cO_\cN \otimes M(\mu) \to A(\mu) \to
\]
with $\cK \in A(\nu_1) * \cdots * A(\nu_k)$.  Since all three terms belong to $\Pcohgm(\cN)$, this is actually a short exact sequence in that category, and the map $\cO_\cN \otimes M(\mu) \to A(\mu)$ is surjective.
\end{proof}

\begin{lem}\label{lem:weyl-aj}
Let $\lambda, \mu \in \Lambda^+$.
\begin{enumerate}
\item If $\lambda \not \leq \mu$, then $\uRHom(\cO_\cN \otimes M(\mu), A(\lambda)) = 0$.\label{it:maj-vanish}
\item If $\lambda = \mu$, then $\uRHom(\cO_\cN \otimes M(\mu), A(\lambda)) \cong \Bbbk$. \label{it:maj-eq}
\end{enumerate}
\end{lem}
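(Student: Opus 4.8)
The plan is to compute $\uRHom(\cO_\cN \otimes M(\mu), A(\lambda))$ by passing to representations of $B$. Since $\cN$ is affine and $R\Gamma$ is faithful on $\Db\Cohgm(\cN)$, and since $\cO_\cN \otimes M(\mu)$ is $\cO_\cN$-flat, we have
\[
\uRHom(\cO_\cN \otimes M(\mu), A(\lambda)) \cong \uRHom_{\Rep(G)}(M(\mu), R\Gamma(A(\lambda))).
\]
Now invoke~\eqref{eqn:aj-ind}, which says $R\Gamma(A(\lambda)) \cong \Rind_B^G(\Bbbk[\fu] \otimes \Bbbk_\lambda)$. By the tensor identity and Frobenius reciprocity (more precisely, the derived version $\uRHom_{\Rep(G)}(M, \Rind_B^G V) \cong \uRHom_{\Rep(B)}(\res^G_B M, V)$, valid because $\res^G_B$ is exact and takes the Weyl module $M(\mu)$ to something with a good enough filtration — in fact we only need that $\res^G_B M(\mu)$ is a finite-dimensional $B$-module), we reduce to computing
\[
\uRHom_{\Rep(B)}(\res^G_B M(\mu), \Bbbk[\fu] \otimes \Bbbk_\lambda).
\]

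For part~\eqref{it:maj-vanish}, I would analyze weights. The weights of $\res^G_B M(\mu)$ all lie in the convex hull of $W\mu$, and in particular every weight of $M(\mu)$ is $\le \mu$ in the usual order (for weights in $\Lambda^+$, but also more generally $\unlhd \mu$). On the other hand, the weights of $\Bbbk[\fu]\otimes\Bbbk_\lambda$ are all of the form $\lambda + (\text{sum of positive roots})$, so they are all $\ge \lambda$. If $\lambda \not\le \mu$, then I claim no weight of $M(\mu)$ is $\ge \lambda$: indeed a weight $\ge\lambda$ that is also $\le\mu$ would force $\lambda \le \mu$. Hence the weight sets of $\res^G_B M(\mu)$ and of $\Bbbk[\fu]\otimes\Bbbk_\lambda$ are disjoint in the strong sense required by Lemma~\ref{lem:wt-ext}: every weight of the target fails to be $\le$ any weight $\mu'$ such that some weight of the source is $\le\mu'$. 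Applying Lemma~\ref{lem:wt-ext} (taking $U = \res^G_B M(\mu)$, and $V = \Bbbk[\fu]\otimes\Bbbk_\lambda$, with $\mu$ in that lemma chosen appropriately) gives $\RHom = 0$, hence $\uRHom = 0$.

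For part~\eqref{it:maj-eq}, with $\lambda = \mu$, the weight $\mu$ occurs in $M(\mu)$ with multiplicity one (the highest weight line) and occurs in $\Bbbk[\fu]\otimes\Bbbk_\mu$ exactly once, namely in the summand $\Bbbk[\fu]_0 \otimes \Bbbk_\mu$. All other weights of $M(\mu)$ are $< \mu$, while all weights of $\Bbbk[\fu]\otimes\Bbbk_\mu$ are $\ge\mu$; so the only contribution to $\RHom$ comes from pairing the highest-weight line of $M(\mu)$ with the lowest-weight line of the injective resolution. Concretely, embed $\Bbbk[\fu]\otimes\Bbbk_\mu \cong \res^B_T\ind_T^B\Bbbk_\mu$ into an injective $B$-module as in the proof of Lemma~\ref{lem:wt-ext}, and run the argument there: every term of the injective resolution has weights $\ge$ some weight of $\Bbbk[\fu]\otimes\Bbbk_\mu$, hence $\ge\mu$, and the weight-$\mu$ space of $\Hom(\res^G_B M(\mu), I^n)$ is one-dimensional for $n=0$ and zero for $n>0$ by the same weight bookkeeping. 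This gives $\uRHom \cong \Bbbk$ concentrated in degree $0$. (The Tate-twist grading: $\Bbbk[\fu]$ is in nonpositive even degrees with $\Bbbk[\fu]_0 = \Bbbk$, so the surviving copy of $\Bbbk$ sits in internal degree $0$, matching the claimed $\uRHom \cong \Bbbk$.)

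The main obstacle I anticipate is the reduction step to $B$-representations: one must be careful that the derived Frobenius reciprocity isomorphism $\uRHom_{\Rep(G)}(M(\mu), \Rind_B^G V) \cong \uRHom_{\Rep(B)}(\res^G_B M(\mu), V)$ holds at the level of derived functors — this is standard (it follows since $\ind_B^G$ is right adjoint to the exact functor $\res^G_B$, so $\Rind_B^G$ is right adjoint to $\res^G_B$ on derived categories), but the graded/equivariant bookkeeping needs to be tracked. Everything else is weight combinatorics of the kind already carried out in Lemma~\ref{lem:wt-ext}.
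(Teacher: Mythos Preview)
Your overall strategy---pass to $\uRHom_B(\res^G_B M(\mu), \Bbbk[\fu]\otimes\Bbbk_\lambda)$ via adjunction and~\eqref{eqn:aj-ind}, then use weight estimates---is exactly the paper's approach, and your argument for part~\eqref{it:maj-vanish} via Lemma~\ref{lem:wt-ext} is correct and matches the paper.

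For part~\eqref{it:maj-eq}, though, your argument has a gap. The phrase ``the weight-$\mu$ space of $\Hom(\res^G_B M(\mu), I^n)$'' is not meaningful as stated (this $\Hom_B$-space carries no $T$-grading), and even interpreting it via the inclusion $\Hom_B \subset \Hom_T \cong (I^n)_\mu$, you have not shown that $(I^n)_\mu$ vanishes for $n>0$: the injective resolution produced in the proof of Lemma~\ref{lem:wt-ext} only guarantees that all weights of $I^n$ are $\ge \mu$, not $> \mu$, so higher terms can certainly have nonzero $\mu$-weight space. (Also, $\Bbbk[\fu]\otimes\Bbbk_\mu$ need not be isomorphic to $\ind_T^B\Bbbk_\mu$ as a $B$-module in positive characteristic---\eqref{eqn:fake-exp-rep} is only a $T$-isomorphism---so you cannot shortcut by declaring it injective.)

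The paper closes this gap by exploiting the grading on $\Bbbk[\fu]$ rather than an injective resolution. Decompose $\Bbbk[\fu]\otimes\Bbbk_\mu = \Bbbk_\mu \oplus (J\otimes\Bbbk_\mu)$ as graded $B$-modules, where $J \subset \Bbbk[\fu]$ is the ideal of elements of strictly negative degree. Every weight of $J\otimes\Bbbk_\mu$ is strictly $>\mu$, so Lemma~\ref{lem:wt-ext} (applied with that same $\mu$) kills this summand outright, leaving $\uRHom_B(\res^G_B M(\mu), \Bbbk_\mu)$. Now run Frobenius reciprocity back the other way: this equals $\uRHom_G(M(\mu), \Rind_B^G\Bbbk_\mu) \cong \uRHom_G(M(\mu), N(\mu)) \cong \Bbbk$, the last step being the standard $\Ext$-orthogonality between Weyl and dual Weyl modules. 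This also makes the Tate-twist bookkeeping transparent, since the surviving summand $\Bbbk_\mu$ sits in degree $0$.
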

\begin{proof}
We have $\uRHom(\cO_\cN \otimes M(\mu), A(\lambda)) \cong \uRHom(M(\mu), R\Gamma(A(\lambda)))$ by adjunction.  We will work with the latter object.  Using~\eqref{eqn:aj-ind}, we have
\[
\uRHom_G(M(\mu), R\Gamma(A(\lambda))) \cong \uRHom_B(\res^G_B M(\mu), \Bbbk[\fu] \otimes \Bbbk_\lambda).
\]
Of course, all weights of $M(\mu)$ are${}\le \mu$, and all weights of $\Bbbk[\fu] \otimes \Bbbk_\lambda$ are${}\ge \lambda$.  Part~\eqref{it:maj-vanish} then follows from Lemma~\ref{lem:wt-ext}.

For part~\eqref{it:maj-eq}, let $J \subset \Bbbk[\fu]$ be the ideal spanned by all homogeneous elements of strictly negative degree.  Thus, $\Bbbk[\fu] \otimes \Bbbk_\mu \cong \Bbbk_\mu \oplus (J \otimes \Bbbk_\mu)$.  Since all weights of $J \otimes \Bbbk_\mu$ are${}> \mu$, Lemma~\ref{lem:wt-ext} again tells us that $\uRHom(\res^G_B M(\mu), J \otimes \Bbbk_\mu) = 0$.  We conclude that
\begin{multline*}
\RHom_G(M(\mu), R\Gamma(A(\mu)\la n\ra)) \cong \RHom_B(\res^G_B M(\mu), \Bbbk_\mu\la n\ra) \\
\cong \RHom_G(M(\mu), \Rind_B^G \Bbbk_\mu\la n\ra) \cong \RHom_G(M(\mu), N(\mu)\la n\ra),
\end{multline*}
and this clearly vanishes for $n \ne 0$ and is $1$-dimensional when $n = 0$.
\end{proof}

\begin{prop}\label{prop:qexc}
Let $\lambda \in \Lambda^+$.  We have:
\begin{enumerate}
\item If $\mu \in \Lambda$ and $\mu \not\unrhd\lambda$, then $\uRHom(A(\mu), A(\lambda)) = 0$.\label{it:qexc-ord}
\item If $i < 0$, then $\uHom^i(A(\lambda),A(\lambda)) = 0$, and $\uHom(A(\lambda), A(\lambda)) \cong \Bbbk$.\label{it:qexc-self}
\item If $i > 0$ and $n \ge 0$, then $\Hom^i(A(\lambda), A(\lambda)\la n\ra) = 0$.\label{it:qexc-mixed}
\item If $\mu \in \Lambda^+$ and $\lambda \neq \mu$, then $\uRHom(A(w_0\mu), A(\lambda)) = 0$.\label{it:qexc-dual}
\end{enumerate}
\end{prop}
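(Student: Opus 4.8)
The plan is to prove part~\eqref{it:qexc-ord} first, since the remaining three parts all reduce to it; then to deduce parts~\eqref{it:qexc-self} and~\eqref{it:qexc-mixed} simultaneously from Lemma~\ref{lem:selfstar}\eqref{it:selfstar-pos}; and finally to obtain part~\eqref{it:qexc-dual} from part~\eqref{it:qexc-ord} together with Serre--Grothendieck duality. Two moves recur throughout. First, for non-dominant $\mu$ one replaces $A(\mu)$ by $A(\dom(\mu))$ by means of the Demazure filtration of Lemma~\ref{lem:demazure}, which introduces only objects of $\cD_{\lhd\dom(\mu)}$ and \emph{strictly positive} Tate twists $\la 2\ell\ra$. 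Second, for dominant $\nu$ one uses the short exact sequence $0\to\cK\to\cO_\cN\otimes M(\nu)\to A(\nu)\to0$ of Lemma~\ref{lem:ratsing}, whose $\cK$ is filtered by $A(\nu_j)$ with $\dom(\nu_j)\le\nu$, and computes $\uRHom(\cO_\cN\otimes M(\nu),A(\lambda))$ with Lemma~\ref{lem:weyl-aj}. We also use constantly that $\uRHom(A(\mu),A(\lambda))$ lies in $\Dp\uVect_\Bbbk$, i.e.\ has vanishing cohomology in negative degrees, by Lemma~\ref{lem:aj-pcoh}.

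For part~\eqref{it:qexc-ord} I would induct on $\dom(\mu)$ in the poset $(\Lambda^+,\le)$, which is well-founded because $\{\nu\in\Lambda^+\mid\nu\le\xi\}$ is finite for each $\xi$. Suppose first that $\mu$ is dominant, so that $\mu\not\unrhd\lambda$ reads $\lambda\not\le\mu$. Applying $\uRHom({-},A(\lambda))$ to $0\to\cK\to\cO_\cN\otimes M(\mu)\to A(\mu)\to0$ and invoking $\uRHom(\cO_\cN\otimes M(\mu),A(\lambda))=0$ from Lemma~\ref{lem:weyl-aj}\eqref{it:maj-vanish} yields $\uRHom(A(\mu),A(\lambda))[1]\cong\uRHom(\cK,A(\lambda))$. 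Each factor $\uRHom(A(\nu_j),A(\lambda))$ is either zero, when $\dom(\nu_j)<\mu$ (then $\lambda\not\le\dom(\nu_j)$, so the induction hypothesis applies), or isomorphic to $\uRHom(A(\mu),A(\lambda))\la2\ell_j\ra$ with $\ell_j\ge1$, when $\nu_j\in W\mu\smallsetminus\{\mu\}$ (by Lemma~\ref{lem:demazure}, using the induction hypothesis on the $\cD_{\lhd\mu}$-part). Writing $W=\uRHom(A(\mu),A(\lambda))$, we thus find $W[1]\in W\la2\ell_{j_1}\ra*\cdots*W\la2\ell_{j_r}\ra$; since $W$ is bounded below, comparing lowest nonvanishing cohomological degrees forces $W=0$. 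For non-dominant $\mu$, Lemma~\ref{lem:demazure} gives $A(\mu)\in\cD_{\lhd\dom(\mu)}*A(\dom(\mu))\la-2\ell\ra$, and applying $\uRHom({-},A(\lambda))$ and using the dominant case together with the induction hypothesis gives $\uRHom(A(\mu),A(\lambda))=0$.

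Parts~\eqref{it:qexc-self} and~\eqref{it:qexc-mixed} come out together. Applying $\uRHom({-},A(\lambda))$ to $0\to\cK\to\cO_\cN\otimes M(\lambda)\to A(\lambda)\to0$ and using $\uRHom(\cO_\cN\otimes M(\lambda),A(\lambda))\cong\Bbbk$ from Lemma~\ref{lem:weyl-aj}\eqref{it:maj-eq}, one gets $\Bbbk\in\uRHom(A(\lambda),A(\lambda))*\uRHom(\cK,A(\lambda))$. As in the previous paragraph the factors of $\uRHom(\cK,A(\lambda))$ are either zero (the $\nu_j$ with $\dom(\nu_j)<\lambda$, by part~\eqref{it:qexc-ord}) or isomorphic to $\uRHom(A(\lambda),A(\lambda))\la2\ell_j\ra$ with $\ell_j\ge1$ (the $\nu_j\in W\lambda\smallsetminus\{\lambda\}$). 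Hence $\Bbbk\in V*V\la n_1\ra*\cdots*V\la n_r\ra$ with $V=\uRHom(A(\lambda),A(\lambda))$ and all $n_i>0$, so Lemma~\ref{lem:selfstar}\eqref{it:selfstar-pos} gives $H^{<0}(V)=0$, $H^0(V)\cong\Bbbk$, and $H^i(V)$ concentrated in strictly positive internal degrees for $i>0$. This is exactly parts~\eqref{it:qexc-self} and~\eqref{it:qexc-mixed}.

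For part~\eqref{it:qexc-dual}, apply $\D$ (Lemma~\ref{lem:aj-sgd}) to the sequence $0\to\cK\to\cO_\cN\otimes M(-w_0\mu)\to A(-w_0\mu)\to0$; this produces a short exact sequence $0\to A(w_0\mu)\to\cO_\cN\otimes N(\mu)\to\cK'\to0$ in which $\cK'$ is filtered by sheaves $A(\beta_j)$ with $\dom(\beta_j)\le\mu$. If $\lambda\not\le\mu$, then $\uRHom(\cO_\cN\otimes N(\mu),A(\lambda))=0$ by the same weight argument as in Lemma~\ref{lem:weyl-aj}\eqref{it:maj-vanish} (via Lemma~\ref{lem:wt-ext}, the weights of $N(\mu)$ being $\le\mu$ and those of $\Bbbk[\fu]\otimes\Bbbk_\lambda$ being $\ge\lambda$), while each $\uRHom(A(\beta_j),A(\lambda))=0$ by part~\eqref{it:qexc-ord} (since $\dom(\beta_j)\le\mu$ and $\lambda\not\le\mu$ force $\beta_j\not\unrhd\lambda$); hence $\uRHom(A(w_0\mu),A(\lambda))=0$. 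The remaining case, $\lambda<\mu$, reduces to this one: by Lemma~\ref{lem:aj-sgd}, $\uRHom(A(w_0\mu),A(\lambda))\cong\uRHom(A(-\lambda),A(-w_0\mu))$, which has the form just handled with $(\mu,\lambda)$ replaced by $(-w_0\lambda,-w_0\mu)$, and $-w_0\mu\not\le-w_0\lambda$ because $w_0$ sends positive roots to negative ones. The step I expect to be the main obstacle is precisely the failure of exceptionality in positive characteristic: $\uHom^{>0}(A(\lambda),A(\lambda))$ need not vanish, so the naive degree-by-degree induction is unavailable, and one must instead tame these higher self-extensions by the self-referential $*$-expression (killed by boundedness below) in part~\eqref{it:qexc-ord} and by Lemma~\ref{lem:selfstar}\eqref{it:selfstar-pos} in parts~\eqref{it:qexc-self}--\eqref{it:qexc-mixed}, which succeeds only because every Demazure reduction contributes exclusively strictly positive Tate twists.
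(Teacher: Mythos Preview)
Your proof is correct and follows essentially the same strategy as the paper's: induction on $\dom(\mu)$ combined with the $*$-filtration from Lemma~\ref{lem:ratsing} and the self-referential vanishing argument of Lemma~\ref{lem:selfstar}. The only substantive difference is in part~\eqref{it:qexc-dual}, case $\lambda\not\le\mu$: the paper simply notes that $\dom(w_0\mu)=\mu\not\ge\lambda$, so $w_0\mu\not\unrhd\lambda$ and part~\eqref{it:qexc-ord} applies directly to $A(w_0\mu)$; your detour through the dualized sequence $0\to A(w_0\mu)\to\cO_\cN\otimes N(\mu)\to\cK'\to0$ is correct but unnecessary, since the conclusion you extract for each filtration piece $A(\beta_j)$ already holds for $A(w_0\mu)$ itself.
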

\begin{proof}
\eqref{it:qexc-ord} Fix $\lambda$.  In the proof, we will assume that $\mu$ is dominant and that $\lambda \not\le \mu$, and we will show that $\uRHom(A(w\mu), A(\lambda)) = 0$ for all $w \in W$.  We proceed by induction with respect to $\unlhd$.  Assume that for all $\nu \lhd \mu$, we already know that $\uRHom(A(\nu), A(\lambda)) = 0$.  We know that $\uRHom(\cO_\cN \otimes M(\mu), A(\lambda)) = 0$ by Lemma~\ref{lem:weyl-aj}\eqref{it:maj-vanish}.  On the other hand, applying $\uRHom({-},A(\lambda))$ to~\eqref{eqn:ratsing}, we have
\begin{multline*}
\uRHom(\cO_\cN \otimes M(\mu), A(\lambda)) \in \\
\uRHom(A(\mu), A(\lambda)) * \uRHom(A(\nu_k), A(\lambda)) * \cdots * \uRHom(A(\nu_1), A(\lambda)).
\end{multline*}
All terms on the right-hand side with $\nu_i \lhd \mu$ vanish by assumption and can be omitted.   The remaining terms are those with $\nu_i \in W\mu$.  By Lemma~\ref{lem:demazure} and the inductive assumption, there is some integer $n > 0$ such that
\[
\uRHom(A(\nu_i), A(\lambda)) \cong \uRHom(A(\mu), A(\lambda))\la n\ra \qquad
\text{if $\nu_i \in W\mu$.}
\]
Therefore, the expression above simplifies to
\begin{multline}\label{eqn:qexc-ord1}
\uRHom(\cO_\cN \otimes M(\mu), A(\lambda)) \in \\
\uRHom(A(\mu), A(\lambda)) * \uRHom(A(\mu), A(\lambda))\la n_1\ra * \cdots * \uRHom(A(\mu), A(\lambda))\la n_m\ra. 
\end{multline}
By Lemma~\ref{lem:selfstar}\eqref{it:selfstar-van}, we conclude that $\uRHom(A(\mu),A(\lambda)) = 0$.

\eqref{it:qexc-self} The first assertion of this part is contained in Lemma~\ref{lem:aj-pcoh}.  The second assertion will be proved together with part~\eqref{it:qexc-mixed} in the next paragraph.

\eqref{it:qexc-mixed} This proof is similar to that of part~\eqref{it:qexc-ord}.  We know from Lemma~\ref{lem:weyl-aj}\eqref{it:maj-eq} that $\uRHom(\cO_\cN \otimes M(\lambda), A(\lambda)) \cong \Bbbk$.  We may again carry out the calculations leading to~\eqref{eqn:qexc-ord1}, this time with $\mu = \lambda$.  Since $n_1, \ldots, n_m > 0$, Lemma~\ref{lem:selfstar}\eqref{it:selfstar-pos} tells us that $\uHom(A(\lambda),A(\lambda)) \cong \Bbbk$, and that for $i > 0$, $\uHom^i(A(\lambda), A(\lambda))$ is concentrated in strictly positive degrees.  In other words, for $n \ge 0$, $\Hom^i(A(\lambda), A(\lambda)\la n\ra) = 0$.

\eqref{it:qexc-dual} If $\mu \not\ge \lambda$, then this is an instance of part~\eqref{it:qexc-ord}.  On the other hand, if $\mu > \lambda$, then we apply Serre--Grothendieck duality and Lemma~\ref{lem:aj-sgd}:
\[
\uRHom(A(w_0\mu), A(\lambda)) \cong \uRHom(\D A(\lambda), \D A(w_0\mu)) \cong \uRHom(A(-\lambda), A(-w_0\mu)).
\]
Now, $-w_0\mu$ and $-w_0\lambda$ are both dominant, and $-w_0\mu > -w_0\lambda$.  In particular, we have $-\lambda \not\unrhd -w_0\mu$, so $\uRHom(A(-\lambda), A(-w_0\mu)) = 0$ by part~\eqref{it:qexc-ord} again.
\end{proof}

\begin{lem}\label{lem:cb-gen}
Let $\fC$ be the category of finitely-generated graded $B$-equivariant modules over the graded ring $\Bbbk[\fu]$.  Then $\Db\fC$ is generated as a triangulated category by objects of form $\Bbbk[\fu] \otimes V\la n\ra$, where $V$ is a finite-dimensional $B$-representation.
\end{lem}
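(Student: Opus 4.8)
The plan is to reduce the statement to the claim that every object of $\fC$ already lies in the triangulated subcategory $\fT \subseteq \Db\fC$ generated by the objects $\Bbbk[\fu] \otimes V\la n\ra$. This reduction is formal: every object of $\Db\fC$ is isomorphic to a bounded complex with terms in $\fC$, and, using the stupid truncation repeatedly, such a complex lies in the triangulated subcategory generated by the shifts of its terms; hence $\Db\fC$ is generated by $\fC$, and it suffices to prove $\fC \subseteq \fT$.

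Fix $M \in \fC$. Since rational $B$-modules are locally finite and $M$ is finitely generated over $\Bbbk[\fu]$, the module $M$ is generated as a $\Bbbk[\fu]$-module by a finite-dimensional graded $B$-subrepresentation $V_0 \subseteq M$, which yields a $B$-equivariant graded surjection $\Bbbk[\fu] \otimes V_0 \twoheadrightarrow M$. Its kernel is again finitely generated, as $\Bbbk[\fu]$ is Noetherian, so iterating produces a resolution $\cdots \to P_1 \to P_0 \to M \to 0$ in $\fC$ with each $P_j$ of the form $\Bbbk[\fu] \otimes V_j$. I do not expect this resolution to be finite; but by the Hilbert syzygy theorem $\mathrm{pd}_{\Bbbk[\fu]} M \le N := \dim \fu$, so the $N$th syzygy $K := \ker(P_{N-1} \to P_{N-2})$ is a projective --- hence, being a finitely generated graded module over the graded polynomial ring $\Bbbk[\fu]$, a graded-free --- $\Bbbk[\fu]$-module, though only nonequivariantly so. The short exact sequences $0 \to \Omega^{j+1} M \to P_j \to \Omega^j M \to 0$ (with $\Omega^j M$ the $j$th syzygy) show that $M$ belongs to the triangulated subcategory generated by $P_0, \dots, P_{N-1}$ and $K$; since each $P_j$ manifestly lies in $\fT$, everything comes down to proving $K \in \fT$.

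The heart of the matter is therefore: any $K \in \fC$ that is free as a graded $\Bbbk[\fu]$-module lies in $\fT$. Write $J \subseteq \Bbbk[\fu]$ for the ideal generated by $\fu^* \subseteq \Bbbk[\fu]$, i.e. the ideal of functions vanishing at $0 \in \fu$; it is $B$-stable because $0$ is fixed by $B$. I would argue by induction on the rank $r = \dim_\Bbbk (K/JK)$. If $r = 0$ then $K = 0$. Otherwise let $n$ be the top degree in which $K$ is nonzero and $K_n$ the corresponding graded component; then $K_n$ is a nonzero finite-dimensional $B$-subrepresentation (the $B$-action preserves the grading) and $(JK)_n = 0$, so $K_n \simto (K/JK)_n$. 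Extending a basis of the top component $(K/JK)_n$ to a homogeneous $\Bbbk[\fu]$-basis of $K$ and lifting, one finds that $K' := \Bbbk[\fu] \cdot K_n$ is a graded $B$-stable submodule on which multiplication induces a $B$-equivariant isomorphism $\Bbbk[\fu] \otimes K_n \simto K'$; in particular $K' \in \fT$. Moreover $K/K'$ is again graded-free over $\Bbbk[\fu]$, with $(K/K')/J(K/K') \cong (K/JK)/K_n$ of rank $r - \dim_\Bbbk K_n < r$, so $K/K' \in \fT$ by induction; the distinguished triangle attached to $0 \to K' \to K \to K/K' \to 0$ then gives $K \in \fT$, completing the induction and the proof.

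The main obstacle --- and the reason for this somewhat indirect route --- is that $B$ is not reductive, so one cannot split off a $B$-stable complement to $JM$ in $M$ and hence cannot build a $B$-equivariant \emph{minimal} free resolution; the equivariant resolution above is a priori infinite. What saves the day are (i) the Hilbert syzygy bound, a purely nonequivariant input that forces the resolution to reach, after $N$ steps, a module free over $\Bbbk[\fu]$, and (ii) the induction on rank by peeling off the top graded component, which does produce honest $B$-stable summands isomorphic to $\Bbbk[\fu] \otimes V$, precisely because the top graded component is automatically $B$-stable and automatically a $\Bbbk[\fu]$-module direct summand.
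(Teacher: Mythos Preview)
Your proof is correct and follows essentially the same route as the paper: build a $B$-equivariant resolution by objects $\Bbbk[\fu]\otimes V$, invoke Hilbert's syzygy theorem to reach a module that is graded-free over $\Bbbk[\fu]$ (the paper calls this ``weakly free''), and then filter any such module by peeling off the top graded component, which is automatically $B$-stable and generates a free $B$-equivariant summand. Your commentary on why the nonreductivity of $B$ forces this indirect argument is apt and matches the spirit of the paper's proof.
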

\begin{proof}
In this proof, we will say that an object $M \in \fC$ is \emph{free} if it is a direct sum of objects of the form $\Bbbk[\fu] \otimes V\la n\ra$.  Let $R$ be the functor which forgets the $B$-action (but retains the grading).  Clearly, $R$ takes free objects of $\fC$ to free (graded) $\Bbbk[\fu]$-modules.  However, a module $M \in \fC$ may have the property that $R(M)$ is a free module while $M$ itself is not.  Let us call a module $M$\emph{weakly free} if $R(M)$ is free.

It is easy to see that $\fC$ has ``enough'' free objects, i.e., that every module is a quotient of a free module.  Therefore, every module $M$ has (possibly infinite) resolution by free modules $\cdots \to F_1 \to F_0 \to M \to 0$.  Hilbert's syzygy theorem, in the form found in, say,~\cite[Corollary~3.19]{clo:uag}, asserts that there is some $n$ such that the kernel of the map $F_n \to F_{n-1}$ is free as a graded $\Bbbk[\fu]$-module, i.e. weakly free. Thus, every module admits a finite resolution whose terms are either free or weakly free.  It follows that $\Db\fC$ is generated by the weakly free modules.

The lemma then follows from the following claim: \emph{Every weakly free module admits a finite filtration whose subquotients are free modules}.  Let $M$ be a weakly free module, and let $m_1, \ldots, m_n$ be a set of homogeneous elements that constitute a basis for it as a free $\Bbbk[\fu]$-module.  Let $N = \max \{ \deg m_i\}$, and assume without loss of generality that $m_1, \ldots, m_k$ have degree $N$ and that $m_{k+1}, \ldots, m_n$ have degree${}< N$.  Then $m_1, \ldots, m_k$ must constitute a $\Bbbk$-basis for the vector space $M_N$.  The $\Bbbk[\fu]$-submodule $M'$ generated by $m_1, \ldots, m_k$ is a free $\Bbbk[\fu]$-module and a direct summand of $R(M)$.  It is also stable under $B$ and isomorphic to $\Bbbk[\fu] \otimes M_N$ as an object of $\fC$.  In other words, $M'$ is a subobject of $M$ in $\fC$; it is free, and the quotient $M/M'$ is weakly free.  The claim then follows by induction on the rank of $R(M)$.
\end{proof} 

Via the equivalences $\fC \cong \Coh^{B \times \Gm}(\fu) \cong \Cohgm(\tcN)$, we obtain the following result.

\begin{cor}\label{cor:tcn-gen}
$\Db\Cohgm(\tcN)$ is generated as a triangulated category by the objects of the form $p^*\cS(V)\la n\ra$, where $V$ ranges over all finite-dimensional $B$-representations. \qed
\end{cor}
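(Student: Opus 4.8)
The plan is to deduce the corollary from Lemma~\ref{lem:cb-gen} by transporting the statement along the chain of equivalences of abelian categories $\fC \cong \Coh^{B \times \Gm}(\fu) \cong \Cohgm(\tcN)$ indicated just before the statement. Since an equivalence of abelian categories induces an equivalence of bounded derived categories under which a set of triangulated generators goes to a set of triangulated generators, the only real work is (i) recalling why these two equivalences hold, and (ii) checking that they carry the generators $\Bbbk[\fu] \otimes V\la n\ra$ of $\Db\fC$ to the objects $p^*\cS(V)\la n\ra$.

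First I would recall the equivalence $\Coh^{B \times \Gm}(\fu) \cong \fC$. The variety $\fu$ is affine with coordinate ring $\Bbbk[\fu]$, so global sections identify $\Coh(\fu)$ with the category of finitely generated $\Bbbk[\fu]$-modules; adding the action of $B$ (the adjoint action) and of $\Gm$ (scaling) amounts exactly to adding a compatible rational $B$-action and a grading, which is the datum defining $\fC$. Under this identification $\cO_\fu$ corresponds to $\Bbbk[\fu]$, the trivial vector bundle $\cO_\fu \otimes V$ with its natural $(B \times \Gm)$-equivariant structure corresponds to $\Bbbk[\fu] \otimes V$, and the Tate twist $\la n\ra$ (i.e.\ $\cO_\fu\la n\ra \otimes {-}$) corresponds to the grading shift, consistent with the convention that $\Bbbk[\fu]$ sits in nonpositive even degrees.

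Next I would recall the equivalence $\Cohgm(\tcN) \cong \Coh^{B \times \Gm}(\fu)$. Since $\tcN = G \times^B \fu$ is the bundle over $\cB = G/B$ associated to the $B$-torsor $G \to G/B$ and the $B$-variety $\fu$, restriction of a $(G \times \Gm)$-equivariant sheaf to the fiber $\fu$ over the base point is an equivalence, with quasi-inverse $\cF \mapsto G \times^B \cF$; this is the standard ``induction'' equivalence for equivariant sheaves on an associated bundle. Now $\cS(V)$ is by construction $G \times^B V$ on $\cB$, so its pullback $p^*\cS(V)$ along $p : \tcN \to \cB$ is $G \times^B (\cO_\fu \otimes V)$, which restricts on the fiber to the trivial bundle $\cO_\fu \otimes V$. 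Combining with the previous paragraph, $p^*\cS(V)\la n\ra$ corresponds to $\Bbbk[\fu] \otimes V\la n\ra$.

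With these identifications in hand the corollary is immediate from Lemma~\ref{lem:cb-gen}, which says precisely that the $\Bbbk[\fu] \otimes V\la n\ra$ generate $\Db\fC$. The only step requiring genuine care is the bookkeeping in (ii): keeping track of the equivariant structures through both equivalences and making sure the internal grading of $\fC$ and the $\Gm$-action on $\tcN$ are matched up with the correct sign conventions. I do not anticipate any deeper obstacle.
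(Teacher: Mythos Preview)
Your proposal is correct and is exactly the approach the paper takes: the corollary is stated with a \qed and is preceded only by the sentence ``Via the equivalences $\fC \cong \Coh^{B \times \Gm}(\fu) \cong \Cohgm(\tcN)$, we obtain the following result.'' Your write-up simply spells out those equivalences and the matching of generators in more detail than the paper does.
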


\begin{lem}\label{lem:cn-gen}
$\Db\Cohgm(\cN)$ is generated as a triangulated category by objects of the form $R\pi_*\cF$, where $\cF \in \Db\Cohgm(\tcN)$.
\end{lem}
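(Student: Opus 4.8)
The strategy is to combine Corollary~\ref{cor:tcn-gen}, which says that $\Db\Cohgm(\tcN)$ is generated by the objects $p^*\cS(V)\la n\ra$, with the fact that $\cN$ is affine, so that $R\Gamma$ detects vanishing. First I would let $\fT \subset \Db\Cohgm(\cN)$ be the triangulated subcategory generated by all $R\pi_*\cF$ with $\cF \in \Db\Cohgm(\tcN)$; the goal is to show $\fT = \Db\Cohgm(\cN)$. Since $R\pi_*$ is a triangulated functor, $\fT$ contains $R\pi_*(\Db\Cohgm(\tcN))$, and by Corollary~\ref{cor:tcn-gen} it is in fact generated by the objects $R\pi_*p^*\cS(V)\la n\ra \cong \cS(\Bbbk[\fu] \otimes V)$-pushforwards, i.e.\ by $\Rind_B^G(\Bbbk[\fu] \otimes V)\la n\ra$-type objects (cf.~\eqref{eqn:aj-ind}). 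In particular, taking $V = \Bbbk_\lambda$, all Andersen--Jantzen sheaves $A(\lambda)\la n\ra$ lie in $\fT$.

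**Main step.** The heart of the argument is to show that the generating objects $\cO_\cN \otimes V\la n\ra$, for $V$ a finite-dimensional $G$-representation, already lie in $\fT$ — since these generate $\Db\Cohgm(\cN)$ (every coherent sheaf on an affine variety has a finite resolution by sums of $\cO_\cN \otimes V\la n\ra$, as $\Bbbk[\cN]$ has finite global dimension because $\cN$ is Gorenstein, or more simply because one can resolve any graded $\Bbbk[\cN]$-module with $G$-action by such objects). To see $\cO_\cN \otimes V \in \fT$, I would filter $V$ as a $G$-module so as to reduce to the case $V = M(\mu)$ a Weyl module: every finite-dimensional $G$-module has a filtration by subquotients $L(\mu)$, and each $L(\mu)$ is a quotient of $M(\mu)$ whose kernel has composition factors $L(\nu)$ with $\nu < \mu$, so a downward induction on the highest weight reduces everything to the Weyl modules $M(\mu)$. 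But Lemma~\ref{lem:ratsing} gives precisely $R\pi_*p^*\cS(M(\mu)) \cong \cO_\cN \otimes M(\mu)$, and the left-hand side is $R\pi_*\cF$ for $\cF = p^*\cS(M(\mu)) \in \Db\Cohgm(\tcN)$, so $\cO_\cN \otimes M(\mu) \in \fT$. Running the induction backwards, $\cO_\cN \otimes V \in \fT$ for every $V$, and hence $\fT = \Db\Cohgm(\cN)$.

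**The obstacle.** The one place requiring genuine care is the claim that the objects $\cO_\cN \otimes V\la n\ra$ generate $\Db\Cohgm(\cN)$: one must check that an arbitrary $(G\times\Gm)$-equivariant coherent sheaf on $\cN$ admits a \emph{finite} resolution by finite direct sums of such objects. This follows because $\Bbbk[\cN]$, being the coordinate ring of a Gorenstein (in fact, normal with rational singularities) variety, has finite global dimension — equivalently one invokes Hilbert's syzygy theorem as in the proof of Lemma~\ref{lem:cb-gen}, applied to $\Bbbk[\cN]$-modules with compatible $G\times\Gm$-action — together with the observation that every such module is an equivariant quotient of a module of the form $\Bbbk[\cN]\otimes V\la n\ra$. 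One then filters the (weakly) free terms of the resolution exactly as in Lemma~\ref{lem:cb-gen}. Everything else is formal: $R\pi_*$ preserves triangles, and the reduction from arbitrary $V$ to Weyl modules is the standard highest-weight induction.
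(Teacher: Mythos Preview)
Your argument has a genuine gap at exactly the step you flagged as ``the obstacle'': the claim that $\Bbbk[\cN]$ has finite global dimension is false. A commutative noetherian ring has finite global dimension if and only if it is regular, and $\cN$ is singular (its singular locus is the complement of the regular nilpotent orbit; in particular the cone point $0$ is singular whenever $G$ has positive rank). Being Gorenstein, or normal with rational singularities, does not imply regularity---the ordinary quadric cone already shows this. Hilbert's syzygy theorem applies in Lemma~\ref{lem:cb-gen} precisely because $\fu$ is an affine space and $\Bbbk[\fu]$ is a polynomial ring; there is no analogue for $\Bbbk[\cN]$.

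This is fatal for your strategy: the triangulated subcategory of $\Db\Cohgm(\cN)$ generated by the free objects $\cO_\cN \otimes V\la n\ra$ is the category of perfect complexes, and since $\cN$ is singular this is a \emph{proper} thick subcategory. So establishing $\cO_\cN \otimes M(\mu) \in \fT$ via Lemma~\ref{lem:ratsing} does not let you conclude $\fT = \Db\Cohgm(\cN)$. The paper's proof avoids this by taking a completely different set of generators: the simple perverse coherent sheaves $\cIC(C,\cV)$, which generate $\Db\Cohgm(\cN)$ because they are the simple objects in the heart of a bounded $t$-structure on it. It then shows each $\cIC(C,\cV)$ lies in $\fT$ by induction on the closure order of orbits, using resolutions of singularities $G\times^P\fv \to \overline{C}$ of orbit closures (pulled back to $\tcN$ via $G\times^B\fv$) to manufacture an object $\cF \in \Db\Cohgm(\tcN)$ with $R\pi_*\cF$ differing from $\cIC(C,\cV)$ only by something supported on smaller orbits.
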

\begin{proof}
Let $\fD \subset \Db\Cohgm(\cN)$ be the subcategory generated by objects $R\pi_*\cF$ for $\cF \in \Db\Cohgm(\tcN)$.  Because $\Pcohgm(\cN)$ is a finite-length category that is the heart of a bounded $t$-structure, we have that the simple perverse coherent sheaves generate $\Db\Cohgm(\cN)$ as a triangulated category, so it suffices to show that the simple perverse coherent sheaves lie in $\fD$.

Consider a simple perverse coherent sheaf $\cIC(C, \cV)$, where $C \subset \cN$ is a nilpotent orbit, and $\cV$ is an irreducible $G$-equivariant vector bundle on $C$.  Let $Z = \overline{C} \smallsetminus C$.  We proceed by induction on $C$ with respect to the closure partial order on nilpotent orbits.  That is, we assume that $\cIC(C',\cV') \in \fD$ for all $C' \subset Z$.  The latter objects generate the full triangulated subcategory $\Db_Z\Cohgm(\cN) \subset \Db\Cohgm(\cN)$ consisting of objects whose support is contained in $Z$.  Thus, our assumption implies that $\Db_Z\Cohgm(\cN) \subset \fD$.

By\cite[Proposition~5.9 and~8.8(II)]{jan:nort}, there is a parabolic subgroup $P \supset B$ and a $P$-stable subspace $\fv \subset \fu \cap \overline{C}$ such that the natural map $q: G \times^P \fv \to \overline{C}$ is a resolution of singularities of $\overline{C}$.  Consider the variety $X = G \times^B \fv$.  We have an inclusion $\tilde\imath: X \to \tcN$, as well as an obvious smooth map $h: X \to G \times^P \fv$ whose fibers are isomorphic to $P/B$.  Let $i: \overline{C} \to \tcN$ be the inclusion map.

Let $\cG \in \Db\Cohgm(\overline{C})$ be an object such that $i_*\cG \cong \cIC(C,\cV)$.  (Because coherent pullback is not exact, some care must be taken to distinguish between these two objects.)  Let $\tilde\cG = (q \circ h)^*\cG$, and let $\cF = \tilde\imath_*\tilde\cG$.  Since $R\Gamma(P/B, \cO_{P/B}) \cong \Bbbk$, it follows from the projection formula that the canonical adjunction morphism $q^*\cG \simto Rh_*h^*(q^*\cG)$ is an isomorphism.  Applying $Rq_*$, we obtain an isomorphism $Rq_*q^*\cG \to R(q \circ h)_*\tilde\cG$.  Then, composing with $\cG \to Rq_*q^*\cG$, we get a morphism
\[
\cG \to R(q \circ h)_*\tilde\cG.
\]
This map is at least an isomorphism over $C$, since $q$ is an isomorphism over $C$.  Thus, its cone $\cK$ has support contained in $Z$.  Applying $i_*$, we have a distinguished triangle
\[
\cIC(C,\cV) \to R\pi_*\cF \to i_*\cK \to.
\]
Since $R\pi_*\cF \in \fD$ and $i_*\cK \in \Db_Z\Cohgm(\cN) \subset \fD$, we conclude that $\cIC(C,\cV) \in \fD$, as desired.
\end{proof}

\section{Proofs of the main results}
\label{sect:proofs}

The results of Section~\ref{sect:aj} fit the framework of Sections~\ref{sect:prelim-ab}--\ref{sect:der-eq} and allow us to quickly deduce the main results.  For $\lambda \in \Lambda^+$, let $\delta_\lambda$ denote the length of the shortest element $w \in W$ such that $w\lambda = w_0\lambda$.  We then put
\begin{equation}\label{eqn:pcoh-qh}
\begin{aligned}
\na \lambda &= A(\lambda)\la -\delta_\lambda\ra, \\
\de \lambda &= A(w_0\lambda)\la \delta_\lambda\ra
\end{aligned}
\end{equation}

\begin{prop}
The objects $\na \lambda$ constitute an abelianesque dualizable graded quasi-exceptional set in $\Db\Cohgm(\cN)$, and the $\de \lambda$ form the dual set.

Likewise, the objects $\Forg(\de \lambda)$ constitute an abelianesque dualizable ungraded quasi-exceptional set in $\Db\Cohg(\cN)$, and the $\Forg(\de \lambda)$ form the dual set.
\end{prop}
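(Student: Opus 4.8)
The plan is to check, with index set $S=\Lambda^+$ partially ordered by the dominance order $\le$ — which satisfies~\eqref{eqn:lowfin} since $\{\mu\in\Lambda^+\mid\mu\le\lambda\}$ sits inside the finite set $\{\mu\in\Lambda\mid\mu\le\lambda\}$ — that $\{\na\lambda\}_{\lambda\in\Lambda^+}$ verifies the four conditions of Definition~\ref{defn:qexc}, that the $\de\lambda$ make it dualizable in the sense of Definition~\ref{defn:dual-exc}, and that the resulting pair is abelianesque. The first thing to record is a dictionary: the Tate twists in $\na\lambda=A(\lambda)\la-\delta_\lambda\ra$ and $\de\lambda=A(w_0\lambda)\la\delta_\lambda\ra$ merely shift the grading on the graded vector spaces $\uHom^i(-,-)$, so every ``full'' $\uHom$ statement we need is equivalent to the same statement with $A(\lambda),A(w_0\lambda)$ in place of $\na\lambda,\de\lambda$; and the mixed condition~\eqref{it:defq-mixed} for $\na\lambda$ unwinds to precisely $\Hom^i(A(\lambda),A(\lambda)\la n\ra)=0$ for $i>0$, $n\ge0$.

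Granting this dictionary, I would dispatch conditions~\eqref{it:defq-ord}, \eqref{it:defq-self}, \eqref{it:defq-mixed} of Definition~\ref{defn:qexc} by quoting parts~\eqref{it:qexc-ord}, \eqref{it:qexc-self}, \eqref{it:qexc-mixed} of Proposition~\ref{prop:qexc} (using that $\le$ and $\unlhd$ agree on $\Lambda^+$), and the abelianesque condition by Lemma~\ref{lem:aj-pcoh}, since each $A(\nu)$ with $\nu\in\Lambda$ lies in the heart $\Pcohgm(\cN)$. For the generation condition~\eqref{it:defq-gen}: Lemma~\ref{lem:cn-gen} and Corollary~\ref{cor:tcn-gen} show $\Db\Cohgm(\cN)$ is generated by the $R\pi_*p^*\cS(V)\la n\ra$ with $V$ a finite-dimensional $B$-representation; filtering $V$ by one-dimensional subquotients $\Bbbk_\mu$ reduces this to the $A(\mu)\la n\ra$ with $\mu\in\Lambda$; and an induction along the (well-founded) dominance order, using Lemma~\ref{lem:demazure} to rewrite $A(\mu)$ in terms of $A(\dom(\mu))\la\cdot\ra$ and objects of $\cD_{\lhd\dom(\mu)}$, shows the $A(\lambda)\la n\ra$ with $\lambda$ dominant — i.e.\ the $\na\lambda\la n\ra$ — already generate. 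The same induction yields the identity $\cD_{\lhd\lambda}=\dqt\lambda$ for every $\lambda\in\Lambda^+$, which is used next.

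For dualizability I would apply Lemma~\ref{lem:demazure} to the pair $\mu=w_0\lambda$, $\lambda$: the relevant minimal length $\ell$ is then $\delta_\lambda$, so the lemma gives $A(w_0\lambda)\in\cD_{\lhd\lambda}*A(\lambda)\la-2\delta_\lambda\ra$, whence, after Tate-twisting by $\la\delta_\lambda\ra$, a distinguished triangle $X\to\de\lambda\to\na\lambda\to{}$ with $X\in\cD_{\lhd\lambda}$. Taking $\iota_\lambda$ to be the middle morphism, its cone $X[1]$ lies in $\cD_{\lhd\lambda}=\dqt\lambda$, which is~\eqref{it:defdq-cone}; and~\eqref{it:defdq-van}, i.e.\ $\uHom^i(\de\lambda,\na\mu)=0$ for $\lambda>\mu$, becomes $\uRHom(A(w_0\lambda),A(\mu))=0$, which is Proposition~\ref{prop:qexc}\eqref{it:qexc-dual}. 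Since a dual set is unique up to isomorphism, $\{\de\lambda\}$ is then \emph{the} dual set, which finishes the graded case.

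For the ungraded statement I would transport everything through the forgetful functor, using the standard decomposition $\Hom^i_{\Db\Cohg(\cN)}(\Forg X,\Forg Y)\cong\bigoplus_{n}\Hom^i_{\Db\Cohgm(\cN)}(X,Y\la n\ra)$ (a morphism of $G$-equivariant complexes splits into its $\Gm$-weight components): this converts the vanishing in Proposition~\ref{prop:qexc}\eqref{it:qexc-ord},\eqref{it:qexc-dual} and Lemma~\ref{lem:aj-pcoh} into the required ungraded vanishing, and converts $\uHom(A(\lambda),A(\lambda))\simeq\Bbbk$ into $\Hom(\Forg A(\lambda),\Forg A(\lambda))\simeq\Bbbk$; the mixed condition~\eqref{it:defq-mixed}, the one place the grading is essential, is simply not part of the ungraded definition (Remark~\ref{rmk:ungr-qexc}). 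Generation and the cone condition in the ungraded setting come from the evident $G$-equivariant analogues of Lemmas~\ref{lem:cb-gen}, \ref{lem:cn-gen} and~\ref{lem:demazure}. The step I expect to need the most care is the identification $\cD_{\lhd\lambda}=\dqt\lambda$ — equivalently, that the $A(\mu)\la n\ra$ with $\dom(\mu)<\lambda$ generate the same triangulated subcategory as the $A(\nu)\la n\ra$ with $\nu\in\Lambda^+$ and $\nu<\lambda$ — since it underlies both the generation axiom and the cone condition, and is exactly where Lemma~\ref{lem:demazure} and well-foundedness of $\le$ are brought to bear.
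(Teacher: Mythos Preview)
Your proposal is correct and follows essentially the same route as the paper: the quasi-exceptional axioms come from Proposition~\ref{prop:qexc}, generation from Corollary~\ref{cor:tcn-gen} and Lemma~\ref{lem:cn-gen} plus an induction via Lemma~\ref{lem:demazure} (which also yields $\cD_{\lhd\lambda}=\dqt\lambda$), dualizability from Lemma~\ref{lem:demazure} together with Proposition~\ref{prop:qexc}\eqref{it:qexc-dual}, and abelianesqueness from Lemma~\ref{lem:aj-pcoh}. The only minor difference is in the ungraded case, where the paper simply invokes the ungraded analogues of the Section~\ref{sect:aj} results rather than transporting through the $\Gm$-weight decomposition of $\Hom$; your approach works equally well.
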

\begin{proof}
Referring to Definition~\ref{defn:qexc}, we see that conditions~\eqref{it:defq-ord}--\eqref{it:defq-self} are proved in Proposition~\ref{prop:qexc}.  To see that condition~\eqref{it:defq-gen} holds, note that every graded finite-dimensional $B$-representation arises by extensions among $1$-dimensional representations $\Bbbk_\lambda\la n\ra$.  By Corollary~\ref{cor:tcn-gen}, the objects $p^*\cS(\Bbbk_\lambda\la n\ra)$ generate $\Db\Cohgm(\tcN)$, and then by Lemma~\ref{lem:cn-gen}, the objects $A(\lambda)\la n\ra$, where $\lambda \in \Lambda$ and $n \in \Z$, generate $\Db\Cohgm(\cN)$.  The fact that it suffices to take the $A(\lambda)\la n\ra$ with $\lambda$ dominant follows from Lemma~\ref{lem:demazure} with an induction argument with respect to $\unlhd$.  Thus, the $\{A(\lambda)\la \delta_\lambda\ra\}$ with $\lambda \in \Lambda^+$ form a graded quasi-exceptional set.

In fact, the aforementioned induction argument also shows that each $\cD_{\lhd \lambda}$ is generated by the $A(\mu)\la n\ra$ with $\mu \in \Lambda^+$, $\mu < \lambda$.  So this category coincides with the one that would have been denoted $\dqt \lambda$ in Section~\ref{sect:prelim-ab}.  By Lemma~\ref{lem:demazure}, there is a morphism $A(w_0\lambda) \to A(\lambda)\la -2\delta_\lambda\ra$ whose cone lies in $\cD_{\lhd \lambda}$.  Combining this observation with Proposition~\ref{prop:qexc}\eqref{it:qexc-dual}, we see that the $\{A(w_0\lambda)\la \delta_\lambda\ra\}$ forms a dual set.  The fact that it is abelianesque is contained in Lemma~\ref{lem:aj-pcoh}.

For the ungraded version, we omit part~\eqref{it:defq-mixed} of Definition~\ref{defn:qexc}.  Since Proposition~\ref{prop:qexc}\eqref{it:qexc-mixed} was the only result of Section~\ref{sect:aj} without an ungraded analogue (see the remarks at the beginning of Section~\ref{sect:aj}), the ungraded version of the present proposition also holds.
\end{proof}

\begin{thm}
The categories $\Pcohgm(\cN)$ and $\Pcohg(\cN)$ are quasi-heredi\-tary, with standard and costandard objects as in~\eqref{eqn:pcoh-qh}.
\end{thm}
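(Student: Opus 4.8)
The plan is to deduce the theorem by combining the preceding Proposition with Theorem~\ref{thm:qexc-t}, and then identifying the heart supplied by that theorem with the category of perverse coherent sheaves. By the preceding Proposition, the objects $\{\na\lambda\}_{\lambda\in\Lambda^+}$ form an abelianesque dualizable graded quasi-exceptional set in $\fD=\Db\Cohgm(\cN)$ (for the index set $\Lambda^+$ with the dominance order, which satisfies~\eqref{eqn:lowfin}), with dual set $\{\de\lambda\}$. Hence Theorem~\ref{thm:qexc-t} produces a bounded $t$-structure $(\dl 0,\dg 0)$ on $\fD$ whose heart $\fA$ is a finite-length graded quasi-hereditary category in which the $\de\lambda\la n\ra$ are exactly the standard objects and the $\na\lambda\la n\ra$ the costandard objects. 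So it remains only to prove that $\fA=\Pcohgm(\cN)$, as full subcategories of $\fD$.

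For this, write $D^{\le 0}$ and $D^{\ge 0}$ for the two halves of the perverse coherent $t$-structure on $\fD$. By Lemma~\ref{lem:aj-pcoh} we have $A(\lambda)\in\Pcohgm(\cN)$ for every $\lambda\in\Lambda$, and the Tate twist is $t$-exact for the perverse coherent $t$-structure (being an autoequivalence of $\fD$ that commutes with the functors $i_C^*$ and $i_C^!$), so every object $\de\lambda\la n\ra$ and $\na\lambda\la n\ra$ lies in $\Pcohgm(\cN)=D^{\le 0}\cap D^{\ge 0}$. Now take $X\in D^{\le 0}$ and $d<0$; then $\na\lambda\la n\ra[d]\in D^{\ge 1}$, so $\Hom(X,\na\lambda\la n\ra[d])=0$, and comparing with the definition of $\dl 0$ in Theorem~\ref{thm:qexc-t} we obtain $D^{\le 0}\subseteq\dl 0$. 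Symmetrically, using that the $\de\lambda\la n\ra$ lie in $D^{\le 0}$, one gets $D^{\ge 0}\subseteq\dg 0$. These two inclusions force equality of the $t$-structures: since $\dg 1$ is the right orthogonal of $\dl 0$ and $D^{\ge 1}$ is the right orthogonal of $D^{\le 0}$, the inclusion $D^{\le 0}\subseteq\dl 0$ yields $\dg 1\subseteq D^{\ge 1}$, hence $\dg 0\subseteq D^{\ge 0}$, hence $\dg 0=D^{\ge 0}$; taking left orthogonals of $\dg 1=D^{\ge 1}$ then gives $\dl 0=D^{\le 0}$. Therefore $\fA=\Pcohgm(\cN)$, which proves the theorem in the graded case.

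The ungraded case is entirely parallel: one applies the ungraded analogue of Theorem~\ref{thm:qexc-t} (recorded in its proof sketch, following~\cite{bez:qes}) to the ungraded quasi-exceptional set $\{\Forg(\na\lambda)\}$ with dual set $\{\Forg(\de\lambda)\}$ from the preceding Proposition, and identifies its heart with $\Pcohg(\cN)$ by the same orthogonality argument; here one uses Lemma~\ref{lem:aj-pcoh} together with the $t$-exactness of $\Forg:\Db\Cohgm(\cN)\to\Db\Cohg(\cN)$ for the perverse coherent $t$-structures (recorded at the end of Section~\ref{subsect:pcoh}) to see that $\Forg(\de\lambda)$ and $\Forg(\na\lambda)$ lie in $\Pcohg(\cN)$. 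The only genuinely new content here is the identification of the heart---the assertion that the $t$-structure manufactured by the quasi-exceptional formalism is the perverse coherent one---and I expect that to be the main (though mild) obstacle; as above, it reduces to the two aisle-inclusions, which are immediate once one knows that the Andersen--Jantzen sheaves are perverse coherent.
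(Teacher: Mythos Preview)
Your proof is correct and follows essentially the same approach as the paper: apply Theorem~\ref{thm:qexc-t} to the quasi-exceptional set from the preceding Proposition, then use Lemma~\ref{lem:aj-pcoh} (the Andersen--Jantzen sheaves are perverse coherent) to identify the resulting heart with $\Pcohgm(\cN)$. The only difference is cosmetic: where you explicitly establish the aisle inclusions $D^{\le 0}\subseteq\dl 0$, $D^{\ge 0}\subseteq\dg 0$ and then deduce equality via orthogonality, the paper simply observes that $\Pcohgm(\cN)\subseteq\fA$ and invokes the general fact that the heart of one bounded $t$-structure cannot be properly contained in the heart of another---which amounts to the same orthogonality argument you spell out.
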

\begin{proof}
By Theorem~\ref{thm:qexc-t}, the objects $\de \lambda$ and $\na \lambda$ determine a $t$-structure on each of $\Db\Cohgm(\cN)$ and $\Db\Cohg(\cN)$ whose heart $\fA$ is quasi-hereditary and in which those objects are standard and costandard, respectively.  But it is easily seen from Lemma~\ref{lem:aj-pcoh} and the definition given in Theorem~\ref{thm:qexc-t} that every perverse coherent sheaf lies in $\fA$.  The heart of one bounded $t$-structure cannot be properly contained in the heart of another bounded $t$-structure, so it must be that $\fA$ coincides with $\Pcohgm(\tcN)$ or $\Pcohg(\cN)$.
\end{proof}

\begin{thm}
The functor $\real: \Db\Pcohgm(\cN) \simto \Db\Cohgm(\cN)$ is an equivalence of categories.
\end{thm}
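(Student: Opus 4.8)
The plan is to deduce this from Theorem~\ref{thm:qexc-dereq}. By the preceding proposition, $\{\na\lambda\}_{\lambda\in\Lambda^+}$ is an abelianesque dualizable graded quasi-exceptional set in $\fD=\Db\Cohgm(\cN)$ with dual set $\{\de\lambda\}$, and by the preceding theorem the heart of the associated $t$-structure (Theorem~\ref{thm:qexc-t}) is $\fA=\Pcohgm(\cN)$. So every standing hypothesis of Section~\ref{sect:der-eq} holds for $\fD$ and $\{\na\lambda\}$ apart from the effaceability property of Definition~\ref{defn:efface}. Once that is verified, Theorem~\ref{thm:qexc-dereq} gives directly that $\real\colon\Db\fA\to\fD$---that is, $\real\colon\Db\Pcohgm(\cN)\to\Db\Cohgm(\cN)$---is an equivalence of categories.

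I would first note that it suffices to check condition~\eqref{it:defeff-costd}. The Serre--Grothendieck duality functor $\D$ is exact on $\Pcohgm(\cN)$, interchanges surjections with injections, carries the Serre subcategory $\aq\lambda$ onto $\aq{-w_0\lambda}$ (because $-w_0$ is an order-preserving involution of $\Lambda^+$), and, by Lemma~\ref{lem:aj-sgd} and~\eqref{eqn:pcoh-qh} (noting $\delta_\lambda=\delta_{-w_0\lambda}$), sends $\na\lambda$ to $\de{-w_0\lambda}$ and $\de\lambda$ to $\na{-w_0\lambda}$, hence $\lambda$-quasicostandard objects to $(-w_0\lambda)$-quasistandard objects. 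Applying $\D$ therefore turns an instance of~\eqref{it:defeff-costd} for $\lambda$ into an instance of~\eqref{it:defeff-std} for $-w_0\lambda$ and conversely, so it is enough to establish~\eqref{it:defeff-costd} for every $\lambda\in\Lambda^+$.

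To do this, fix $\lambda\in\Lambda^+$ and a morphism $f\colon\na\lambda\to X[d]$ with $d>0$ and $X$ a $\lambda$-quasicostandard object. I would take $Y=\cO_\cN\otimes M(\lambda)\la-\delta_\lambda\ra$; by Lemma~\ref{lem:ratsing} it lies in $\Pcohgm(\cN)$ and admits a surjection $h\colon Y\thr A(\lambda)\la-\delta_\lambda\ra=\na\lambda$. Two points remain. First, $Y\in\aq\lambda$: the filtration of $\cO_\cN\otimes M(\lambda)$ in $\Pcohgm(\cN)$ by Andersen--Jantzen sheaves from Lemma~\ref{lem:ratsing}, together with Lemma~\ref{lem:demazure} (which reduces the factors $A(\nu_i)$ with $\nu_i\in W\lambda$ to the dominant weight $\lambda$, and those with $\nu_i\lhd\lambda$ to smaller dominant weights) and the fact that a costandard object $A(\mu)$ has composition factors with labels $\le\mu$, shows that every composition factor of $\cO_\cN\otimes M(\lambda)$ has label $\le\lambda$. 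Second, $\Hom^d(Y,X)=0$ for $d>0$: running up a costandard filtration of $X$ reduces this to the vanishing of $\uHom^d(\cO_\cN\otimes M(\lambda),A(\lambda))$ for $d>0$, which holds because Lemma~\ref{lem:weyl-aj}\eqref{it:maj-eq} identifies $\uRHom(\cO_\cN\otimes M(\lambda),A(\lambda))$ with $\RHom_G(M(\lambda),N(\lambda))\cong\Bbbk$, a complex concentrated in cohomological degree $0$. Since $f\circ h$ is an element of $\Hom^d(Y,X)$, it must vanish; this is~\eqref{it:defeff-costd}.

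The one step above that is not purely formal is the claim $Y\in\aq\lambda$, i.e.\ the control on composition factors of $\cO_\cN\otimes M(\lambda)$. In characteristic $0$ one could work with $\cO_\cN\otimes L(\lambda)$ and this would be transparent, but in good positive characteristic $M(\lambda)$ need not be simple, and here the geometric input of Section~\ref{sect:aj} is essential: Lemma~\ref{lem:ratsing} (which rests on $R\pi_*\cO_{\tcN}\cong\cO_\cN$, valid in good characteristic), Lemma~\ref{lem:demazure}, and the $\Hom$-vanishing of Lemma~\ref{lem:weyl-aj}. Granting those, the effaceability property---and with it the derived equivalence---follows formally from the machinery of Sections~\ref{sect:prelim-ab}--\ref{sect:der-eq}.
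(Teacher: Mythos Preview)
Your proof is correct and follows the same approach as the paper: verify effaceability via the surjection $\cO_\cN\otimes M(\lambda)\thr A(\lambda)$ from Lemma~\ref{lem:ratsing} and the $\Hom^d$-vanishing of Lemma~\ref{lem:weyl-aj}\eqref{it:maj-eq}, reduce condition~\eqref{it:defeff-std} to~\eqref{it:defeff-costd} by Serre--Grothendieck duality (Lemma~\ref{lem:aj-sgd}), and then invoke Theorem~\ref{thm:qexc-dereq}. You are in fact more careful than the paper on two points: you track the Tate twist $\la-\delta_\lambda\ra$ needed to pass from $A(\lambda)$ to $\na\lambda$, and you explicitly verify that $Y\in\aq\lambda$, a requirement of Definition~\ref{defn:efface} that the paper's proof leaves implicit.
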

\begin{proof}
By Lemma~\ref{lem:weyl-aj}\eqref{it:maj-eq}, we have $\uHom^d(\cO_\cN \otimes M(\lambda), A(\lambda)) = 0$ for all $d > 0$.  It follows that if $X \in \Pcohgm(\cN)$ is a $\lambda$-quasicostandard object, then $\uHom^d(\cO_\cN \otimes M(\lambda), X) = 0$.  By Lemma~\ref{lem:ratsing}, we have a surjective map $\cO_\cN \otimes M(\lambda) \to A(\lambda)$.  Thus, part~\eqref{it:defeff-costd} of Definition~\ref{defn:efface} holds.  By Lemma~\ref{lem:aj-sgd}, the Serre--Grothendieck duality functor exchanges standard and costandard objects, so part~\eqref{it:defeff-std} of Definition~\ref{defn:efface} follows from part~\eqref{it:defeff-costd}.  By Theorem~\ref{thm:qexc-dereq}, the desired equivalence holds.
\end{proof}


\end{document}